		\renewcommand{\todo}[2][]{\tikzexternaldisable\@todo[#1]{#2}\tikzexternalenable}
		\newcommand{\tikzexternaldisable}{}
		\newcommand{\tikzexternalenable}{}
	\newcommand{\biblio}
	{\printbibliography}
	\newcommand{\biblio}%
	{
		\bibliographystyle{alpha}
		\bibliography{shifted.bib}}
\author[S.~Matsumoto]{Sho Matsumoto}
\address{
	Graduate School of Science and Engineering, Kagoshima University 
	1-21-35, Korimoto, Kagoshima, Japan 
}
\email{shom@sci.kagoshima-u.ac.jp}
\author[P.~\'Sniady]{Piotr \'Sniady}
\address{
	Institute of Mathematics, Polish Academy of Sciences,
	\mbox{ul.~\'Sniadec\-kich 8,} \linebreak 00-656 Warszawa, Poland
} 
\email{psniady@impan.pl}
\newcommand{\sniady}{the second-named author\xspace}
\newcommand{\Z}{\mathbb{Z}}
\newcommand{\C}{\mathbb{C}}
\newcommand{\Young}{\mathbb{Y}}
\newcommand{\KO}{\mathbb{A}}
\newcommand{\KOodd}{\KO_{\bullet}^{\operatorname{odd}}}
\newcommand{\A}{\mathcal{A}}
\newcommand{\B}{\mathcal{B}}
\newcommand{\E}{\mathbb{E}}
\newcommand{\F}{\mathcal{F}}
\newcommand{\G}{\mathcal{G}}
\newcommand{\HH}{\mathcal{H}}
\newcommand{\Sequences}{\mathcal{X}}
\newcommand{\Alphabet}{\mathbb{A}}
\newcommand{\zero}{\bar{0}}
\newcommand{\one}{\bar{1}}
\newcommand{\LimitP}{\mathcal{P}}
\newcommand{\LimitQ}{\mathcal{Q}}
\newcommand{\R}{\mathbb{R}}
\newcommand{\X}{\mathbf{X}}
\newcommand{\Sym}[1]{\mathfrak{S}_{#1}}
\newcommand{\Spin}[1]{\widetilde{\mathfrak{S}}_{#1}}
\newcommand{\SGA}[1]{\C \Sym{#1}^-} 
\newcommand{\PP}{\mathbb{P}}
\newcommand{\tableaux}{\mathcal{T}}
\newcommand{\kumu}{\kappa}
\newcommand{\ChSpin}{\Ch^{\mathrm{spin}}}
\newcommand{\Ch}{\mathrm{Ch}}
\newcommand{\N}{\mathbb{N}}
\newcommand{\irrepSn}{\rho}
\newcommand{\irrepSp}{\psi}
\newcommand{\double}{D^*}
\newcommand{\class}{\pi}
\newcommand{\PHIeasy}[2]{\phi^{#1}\left(#2\right)}
\newcommand{\free}{{\mathbf{r}}}
\newcommand{\covarianceDisjoint}{\mathbf{k}^\bullet}
\newcommand{\covarianceProba}{\mathbf{k}}
\newcommand*\circled[1]{\tikz[baseline=(char.base)]{
		\node[shape=circle,draw,inner sep=2pt] (char) {$#1$};}}
\DeclareSymbolFont{fouriersymbols}{FMS}{futm}{m}{n}
\DeclareSymbolFont{fourierlargesymbols}{FMX}{futm}{m}{n}
\DeclareMathDelimiter{\VERT}{\mathord}{fouriersymbols}{152}{fourierlargesymbols}{147}
\newcommand{\vertiii}[1]{ \left\VERT #1 \right\VERT }
\newcommand{\Part}{\mathcal{P}}
\newcommand{\SP}{\mathcal{SP}}
\newcommand{\OP}{\mathcal{OP}}
\newcommand{\SW}[2]{\mathbb{P}^{\textrm{SW}}_{{#1},{#2}}}
\newcommand{\SWW}{\textrm{SW}}
\DeclareMathOperator{\GL}{GL}
\DeclareMathOperator{\PGL}{PGL}
\DeclareMathOperator{\DEGREE}{deg}
\DeclareMathOperator{\Tr}{Tr}
\DeclareMathOperator{\tr}{tr}
\DeclareMathOperator{\dimm}{dim}
\DeclareMathOperator{\Cov}{Cov}
\DeclareMathOperator{\id}{id}
\DeclareMathOperator{\spin}{spin}
\DeclareMathOperator{\lin}{span}
\theoremstyle{definition}
\newtheorem{definition}{Definition}[section]
\theoremstyle{plain}
\newtheorem{theorem}[definition]{Theorem}
\newtheorem{proposition}[definition]{Proposition}
\newtheorem{corollary}[definition]{Corollary}
\newtheorem{lemma}[definition]{Lemma}
\newtheorem{observation}[definition]{Observation}
\theoremstyle{remark}
\newtheorem{example}[definition]{Example}
\newtheorem{remark}[definition]{Remark}
\begin{document}

\subjclass[2010]{%
Primary
20C25;  
Secondary 
20C30,  
60F05, 	
05E10   
}

\keywords{projective representations of the symmetric groups, 
random strict partitions, random shifted tableaux, limit shape,
Kerov's CLT, shifted Schur--Weyl measure}

\title[Random strict partitions]{Random strict partitions \\ and random shifted tableaux}

\begin{abstract}
We study asymptotics of random \emph{shifted} Young diagrams which correspond to
a given sequence of reducible \emph{projective} representations of the symmetric
groups. We show limit results (Law of Large Numbers and Central Limit Theorem)
for their shapes, provided that the representation character ratios and their
cumulants converge to zero at some prescribed speed. Our class of examples
includes uniformly random shifted standard tableaux with prescribed shape as
well as shifted tableaux generated by some natural combinatorial algorithms
(such as shifted Robinson--Schensted--Knuth correspondence) applied to a random
input.
\end{abstract}

\maketitle

\hspace{0.5\textwidth} \begin{minipage}{0.5\textwidth}
	\emph{Dedidated to \\
		Etsuko Hirai 
		and 
		Takeshi Hirai.}
\end{minipage}	

\bigskip

\section{Introduction}

\subsection{Outlook}
\label{sec:outlook}

This paper is arranged in the following way which
is intended to make the learning curve somewhat less steep.

\smallskip

We start with \crefrange{sec:notations-start}{sec:notations-end} of this
Introduction where we present basic notations related to \emph{strict
	partitions}. We continue in \crefrange{sec:examples-start}{sec:examples-end}
with two concrete example models which we aim to treat,
namely
\emph{random strict tableaux of prescribed shape} and 
\emph{asymptotics of shifted Robinson--Schensted--Knuth correspondence}.

\smallskip

These two example models turn out to be special cases of a more general and
more abstract theory which we present later in the paper in
\cref{sec:representations-and-measures}. 
This general theory concerns \emph{random shifted Young diagrams} related to
\emph{reducible spin representations of the symmetric groups}. We also state
there the main results of the current paper, \cref{theo:mainLLN} and
\cref{theo:mainCLT}.

These general results are direct analogues of their classical counterparts for
\emph{linear} representations and \emph{usual} (non-shifted) Young diagrams. Our
strategy will be twofold: to revisit the ideas from the work of \sniady
\cite{Sniady2006} which concern the \emph{linear} representations of the symmetric
groups, as well as to use the link between the linear and the spin setup which
we explored only recently \cite{Matsumoto2018c}.

\smallskip

Our reuse of the notion of the \emph{approximate factorization property} for
the character ratios (which appears in the assumptions of \cref{theo:mainLLN}
and \cref{theo:mainCLT}) has some novelty: 
in \cref{sec:afc} we discuss this established notion with a new, more abstract and hopefully more elegant viewpoint of
the category theory. This new approach seems applicable in a quite wide
spectrum of setups, including the classical one \cite{Sniady2006}.

\smallskip

The remaining part of the paper is tailored specifically for the needs of the setup of the asymptotic spin representation theory of the symmetric groups.

In \cref{sec:ko} we present our key technical tools: Kerov--Olshanski algebra
and its spin analogue. We prove the main technical difficulty of the current
paper, \cref{theo:afp-for-spin}.

In \cref{sec:key} we prove \cref{theo:key-tool} which provides several
equivalent, convenient characterizations for approximate factorization property
for character ratios. This result directly implies the  main results of the
current paper,  \cref{theo:mainLLN} and \cref{theo:mainCLT}.

\cref{sec:applications} is devoted to applications of the
aforementioned \cref{theo:key-tool}. We construct a large collection of examples
of sequences of representations with approximate factorization property for
which the results of the current paper are applicable. In particular, we explain
how \cref{theo:randomSYT}, \cref{theo:schur-weyl},
\cref{coro:schur-weyl-insertion}, \cref{coro:schur-weyl-recording} fit into the
general framework of the approximate factorization property.

In \cref{sec:recover-shape} we recall the methods for finding explicitly the
limit shape of (shifted) Young diagrams.

\smallskip

In \cref{sec:projective-representations} we recall some basic facts from the spin
representation theory of the symmetric groups.

\subsection{Strict partitions}
\label{sec:strict-partitions}
\label{sec:notations-start}

\emph{Random partitions} 
occur in mathematics and physics in a wide variety of contexts,
in particular in the Gromov--Witten and Seiberg--Witten theories \cite{Okounkov2003,Vershik1995}. 
In the current paper we focus attention on a special class,
namely on \emph{random strict partitions}. 

We recall that 
\begin{equation}
\label{eq:strict}
\xi=(\xi_1,\dots,\xi_\ell)
\end{equation} 
is a \emph{strict partition} of an integer $n$ if $\xi_1>\dots>\xi_\ell$ form a
\emph{strictly} decreasing sequence of positive integers such that
$n=|\xi|=\xi_1+\cdots+\xi_\ell$, cf.~\cite[page 9]{Macdonald1995}. 
We denote by $\SP_n$ the set of strict partitions of a given integer $n$ and by
$\SP=\bigcup_{n\geq 0} \SP_n$ the set of all strict partitions.

It is convenient to represent graphically a strict partition $\xi$ by a
\emph{shifted Young diagram}, as it is shown in \cref{fig:strict}, which is a
collection of boxes
\begin{equation} 
\label{eq:boxes}
\left\{ (x,y) : x,y\in\N, \medspace
1\leq y < x \leq y + \xi_y  \right\}
 \end{equation}
on the plane. We use the French notation for drawing diagrams as well as the
usual Cartesian coordinate system $OXY$ on the plane; in~particular the variable
$x$ indexes the columns and the variable $y$ indexes the rows. The rows and
columns are indexed by natural numbers $\mathbb{N}=\{1,2,\dots\}$.

Above and in the following it is convenient to view a strict partition
\eqref{eq:strict} as an \emph{infinite} sequence of non-negative integers
\[\xi=(\xi_1,\dots,\xi_\ell,0,0,\dots)\]
by padding zeros at the end.

\subsection{Strict partitions: motivations and applications}
\label{sec:strict-motivations}

Strict partitions occur naturally in the context of \emph{spin representations}
of the symmetric groups $\Sym{n}$, cf.~\cref{sec:spin} later on, and in this
article we shall concentrate on this link. Nevertheless, it is worth pointing
out that they also appear in the theory of partially ordered sets as order
filters in the root poset of type $B_n$, as well as they form an interesting
infinite family of $d$-complete posets, which in turn is connected to fully
commutative elements of some Coxeter groups~\cite{Stembridge1996,Proctor1999}.
Also, strict partitions are in a bijective correspondence with permutations
which avoid patterns $132$ and $312$ \cite{Davis2017}.

\begin{figure}[t]
	\centerline{
		\begin{tikzpicture}[xscale=0.6,yscale=0.6]
		\begin{scope}
		\clip (0,0) -- (0,1) -- (1,1) -- (1,2) -- (2,2) -- (2,3) -- (3,3) -- (4,3) -- (4,2) -- (6,2) -- (6,1) -- (6,0);
		\draw[gray] (0,0) grid (8,3);
		\end{scope}
		\draw[blue,->] (-1,0) -- (8,0) node[anchor=west]{\textcolor{blue}{$x$}};
		\draw[blue,->] (-1,0) -- (-1,4) node[anchor=south]{\textcolor{blue}{$y$}};	   
		\fill[blue,opacity=0.1] (0,0) -- (0,1) -- (1,1) -- (1,2) -- (2,2) -- (2,3) -- (3,3) -- (4,3) -- (4,2) -- (6,2) -- (6,1) -- (6,0) -- cycle;
		\draw[ultra thick] (0,0) -- (0,1) -- (1,1) -- (1,2) -- (2,2) -- (2,3) -- (3,3) -- (4,3) -- (4,2) -- (6,2) -- (6,1) -- (6,0) -- cycle;
		\draw[line width=0.2cm, opacity=0.4,blue,line cap=round] (0.5,0.5) -- (5.5,0.5);
		\draw[line width=0.2cm, opacity=0.4,blue,line cap=round] (1.5,1.5) -- (5.5,1.5);
		\draw[line width=0.2cm, opacity=0.4,blue,line cap=round] (2.5,2.5) -- (3.5,2.5);	   
		\foreach \y in {1, 2,3,4,5, 6, 7, 8 }
			\draw[blue,thick] (\y cm, 0) ++(-1cm,5pt) -- ++(0,-10pt) node[anchor=north] {\tiny \textcolor{blue}{$\y$}};
		\foreach \y in {1, 2,3}
			\draw[blue,thick] (-1 cm,\y cm) ++(5pt,0) -- ++(-10pt,0) node[anchor=east] {\tiny \textcolor{blue}{$\y$}};
		\begin{scope}[xshift=11cm]
		    \draw[blue,->] (0,0) -- (10,0) node[anchor=west]{\textcolor{blue}{$x$}};
		    \draw[blue,->] (0,0) -- (0,8) node[anchor=south]{\textcolor{blue}{$y$}};	   
			\begin{scope}[xshift=1cm]
			\clip (0,0) -- (0,1) -- (1,1) -- (1,2) -- (2,2) -- (2,3) -- (3,3) -- (4,3) -- (4,2) -- (6,2) -- (6,1) -- (6,0);
			\draw[gray] (0,0) grid (8,3);
			\end{scope}
			\begin{scope}[rotate=90,yscale=-1]
			\clip (0,0) -- (0,1) -- (1,1) -- (1,2) -- (2,2) -- (2,3) -- (3,3) -- (4,3) -- (4,2) -- (6,2) -- (6,1) -- (6,0);
			\draw[gray] (0,0) grid (8,3);
			\end{scope}    
			\begin{scope}[xshift=1cm]
				\fill[blue,opacity=0.1] (0,0) -- (0,1) -- (1,1) -- (1,2) -- (2,2) -- (2,3) -- (3,3) -- (4,3) -- (4,2) -- (6,2) -- (6,1) -- (6,0) -- cycle;
				\draw[ultra thick] (0,0) -- (0,1) -- (1,1) -- (1,2) -- (2,2) -- (2,3) -- (3,3) -- (4,3) -- (4,2) -- (6,2) -- (6,1) -- (6,0) -- cycle;
				\draw[line width=0.2cm, opacity=0.4,blue,line cap=round] (0.5,0.5) -- (5.5,0.5);
				\draw[line width=0.2cm, opacity=0.4,blue,line cap=round] (1.5,1.5) -- (5.5,1.5);
				\draw[line width=0.2cm, opacity=0.4,blue,line cap=round] (2.5,2.5) -- (3.5,2.5);	 
				\end{scope}
				\begin{scope}[rotate=90,yscale=-1]
					\fill[OliveGreen,opacity=0.1] (0,0) -- (0,1) -- (1,1) -- (1,2) -- (2,2) -- (2,3) -- (3,3) -- (4,3) -- (4,2) -- (6,2) -- (6,1) -- (6,0) -- cycle;
					\draw[ultra thick] (0,0) -- (0,1) -- (1,1) -- (1,2) -- (2,2) -- (2,3) -- (3,3) -- (4,3) -- (4,2) -- (6,2) -- (6,1) -- (6,0) -- cycle;
					\draw[line width=0.2cm, opacity=0.4,OliveGreen,line cap=round] (0.5,0.5) -- (5.5,0.5);
					\draw[line width=0.2cm, opacity=0.4,OliveGreen,line cap=round] (1.5,1.5) -- (5.5,1.5);
					\draw[line width=0.2cm, opacity=0.4,OliveGreen,line cap=round] (2.5,2.5) -- (3.5,2.5);	
				\end{scope}    
				\draw[line width=3.2pt, dashed,red] (0,0) -- (1,0) -- (1,1) -- (2,1) -- (2,2) -- (3,2) -- (3,3) -- (4,3) -- (4,4) -- (5,4) -- (5,5) -- (6,5) -- (6,6);
				\foreach \y in {1,2,3,4,5,6,7,8,9}
				\draw[blue,thick] (\y cm, 0) ++(0,5pt) -- ++(0,-10pt) node[anchor=north] {\tiny \textcolor{blue}{$\y$}};
				\foreach \y in {1,2,3,4,5,6,7}
				\draw[blue,thick] (0,\y cm) ++(5pt,0) -- ++(-10pt,0) node[anchor=east] {\tiny \textcolor{blue}{$\y$}};
			\end{scope}
		\end{tikzpicture}
	}
	
	\caption{ Strict partition $\xi=(6,5,2)$ shown as a \emph{shifted Young
			diagram} and its double $D(\xi)=(7,7,5,3,2,2)$, cf.~\cref{sec:double}. } 
	\label{fig:strict}
	\label{fig:double}

\end{figure}
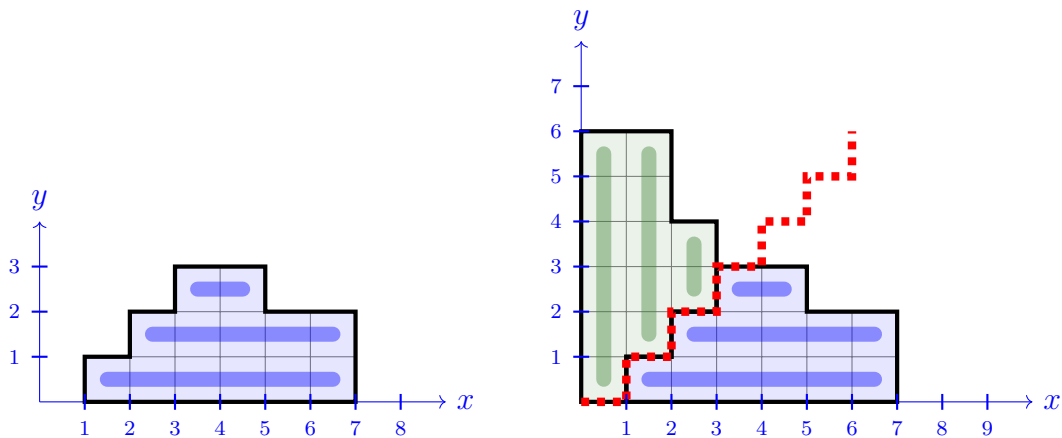

\subsection{Drawing (strict) partitions and Young diagrams for asymptotic
	problems} 

\label{sec:drawing}

For asymptotic problems we need some way of drawing large strict partitions
which would allow us to compare the shapes of such partitions with different
numbers of boxes. In this section we shall present a convenient solution to this
difficulty.

\begin{figure}
	
	\begin{tikzpicture}[scale=0.6]
	
	\begin{scope}
	\clip (-10.5,-1) rectangle (10.5,10.5);
	\draw[black!20] (-11,0) grid (11,11);
	\end{scope}
	
	\draw[dotted] (0,0) -- (-8.5,8.5);
	
	\begin{scope}[shift={(-0.5,-0.5)}]
	\clip  (7,7) -- (5,9) --(3,7) -- (2,8) -- (0,6) -- (1,5) --(0,4) -- (1,3) -- (0,2) -- (1,1);
	\fill[opacity=0.1,blue,rotate=45] (0,0) rectangle (15,15);
	\draw[blue,rotate=45,scale=sqrt(2)] (0,0) grid (11,11);
	\end{scope}

	\begin{scope}[shift={(0.5,-0.5)},xscale=-1]
	\draw	[dashed,ultra thick] (9,9) -- (7,7) -- (5,9) --(3,7) -- (2,8) -- (0.5,6.5);
	\end{scope}

	\begin{scope}[shift={(-0.5,-0.5)}]
	\draw [thick,blue,->] (0,0) -- (11,11) node[anchor=south west] {$x$};
	\draw [thick,blue,->] (0,0) -- (-8,8) node[anchor=south east] {$y$};

	\draw	[ultra thick] (9,9) -- (7,7) -- (5,9) --(3,7) -- (2,8) -- (0.5,6.5);
	
	\draw[line width=0.2cm, opacity=0.4,blue,line cap=round] (1,2) -- (6,7);
	\draw[line width=0.2cm, opacity=0.4,blue,line cap=round] (1,4) -- (5,8);
	\draw[line width=0.2cm, opacity=0.4,blue,line cap=round] (1,6) -- (2,7);
	
	\draw[blue,thick] (1 cm, 1 cm) ++(-5pt,5pt) -- ++(+10pt,-10pt);
	\foreach \y in {2,3,4,5, 6, 7, 8}
	\draw[blue,thick] (\y cm, \y cm) ++(-5pt,5pt) -- ++(+10pt,-10pt) node[anchor=north west] {\tiny \textcolor{blue}{$\y$}};
	
	\draw[blue,thick] (-1 cm, 1 cm) ++(5pt,5pt) -- ++(-10pt,-10pt);
	\foreach \y in {2,3,4,5,6}
	\draw[blue,thick] (-\y cm, \y cm) ++(5pt,5pt) -- ++(-10pt,-10pt) node[anchor=north east] {\tiny \textcolor{blue}{$\y$}};
	
	\end{scope}

	\draw [thick,->] (-10,0) -- (11,0) node[anchor=west] {$z$};
	\draw [thick,->] (0,-1) -- (0,11) node[anchor=south] {$t$}; 
	\draw (-1 cm,5pt) -- (-1 cm,-5pt); 
	\foreach \x in {1,2,3,4,5,6,7,8,9,10,-2,-3,-4,-5,-6,-7,-8,-9}
	\draw (\x cm,5pt) -- (\x cm,-5pt) node[anchor=north] {\tiny $\x$};
	\foreach \y in {1,2,3,4,5,6,7,8,9,10}
	\draw (5pt,\y cm) -- (-5pt,\y cm) node[anchor=east] {\tiny $\y$};
	
	\end{tikzpicture}
	\caption{The strict partition $\xi$ from \cref{fig:double} shown in the shifted Russian
		convention. The upper envelope of the boxes (the thick zig-zag line) is the
		graph of the \emph{profile} $\omega_{\xi}\colon \R_+ \to \R_+$. If necessary,
		the domain of the profile $\omega_{\xi}\colon \R \to \R_+$ can be extended to
		the whole real line (the thick dashed line).} 
	\label{fig:RussianDario}
	
	\vspace{3ex}
	
	\begin{tikzpicture}[scale=0.6]
	\begin{scope}
	\clip (-10.5,-1) rectangle (10.5,10.5);
	\draw[black!20] (-11,0) grid (11,11);
	\end{scope}
	\draw [thick,blue,->] (0,0) -- (11,11) node[anchor=south west] {$x$};
	\draw [thick,blue,->] (0,0) -- (-10,10) node[anchor=south east] {$y$};
	\begin{scope}
	\clip (0,0) -- (7,7) -- (5,9) --(3,7) -- (2,8) -- (0,6) -- (-1,7) -- (-2,6) -- (-4,8) -- (-6,6);
	\fill[blue,opacity=0.1]  (7,7) -- (5,9) --(3,7) -- (2,8) -- (0,6) -- (1,5) -- (0,4) -- (1,3) -- (0,2) -- (1,1);
	\fill[OliveGreen,opacity=0.1] (0,6) -- (-1,7) -- (-2,6) -- (-4,8) -- (-6,6) -- (0,0) -- (1,1) -- (0,2) -- (1,3) -- (0,4) -- (1,5);
	\draw[blue,rotate=45,scale=sqrt(2)] (0,0) grid (11,11);
	\end{scope}
	\draw	[ultra thick] (9,9) -- (7,7) -- (5,9) --(3,7) -- (2,8) -- (0,6) -- (-1,7) -- (-2,6) -- (-4,8) -- (-6,6) -- (-9,9);
	\draw [thick,->] (-11,0) -- (11,0) node[anchor=west] {$z$};
	\draw [thick,->] (0,-1) -- (0,11) node[anchor=south] {$t$};  
	\foreach \x in {1,2,3,4,5,6,7,8,9,10,-1,-2,-3,-4,-5,-6,-7,-8,-9,-10}
	\draw (\x cm,5pt) -- (\x cm,-5pt) node[anchor=north] {\tiny $\x$};
	\foreach \y in {1,2,3,4,5,6,7,8,9,10}
	\draw (5pt,\y cm) -- (-5pt,\y cm) node[anchor=east] {\tiny $\y$};
	\draw[line width=0.2cm, opacity=0.4,blue,line cap=round] (1,2) -- (6,7);
	\draw[line width=0.2cm, opacity=0.4,blue,line cap=round] (1,4) -- (5,8);
	\draw[line width=0.2cm, opacity=0.4,blue,line cap=round] (1,6) -- (2,7);
	\draw[line width=0.2cm, opacity=0.4,OliveGreen,line cap=round] (0,1) -- (-5,6);
	\draw[line width=0.2cm, opacity=0.4,OliveGreen,line cap=round] (0,3) -- (-4,7);
	\draw[line width=0.2cm, opacity=0.4,OliveGreen,line cap=round] (0,5) -- (-1,6);
	\foreach \y in {1,2,3,4,5, 6, 7, 8}
	\draw[blue,thick] (\y cm, \y cm) ++(-5pt,5pt) -- ++(+10pt,-10pt) node[anchor=north west] {\tiny \textcolor{blue}{$\y$}};
	
	\foreach \y in {1,2,3,4,5,6,7,8,9}
	\draw[blue,thick] (-\y cm, \y cm) ++(5pt,5pt) -- ++(-10pt,-10pt) node[anchor=north east] {\tiny \textcolor{blue}{$\y$}};
	\end{tikzpicture}
	\caption{The Young diagram $\lambda=D(\xi)$ from \cref{fig:double} shown in the
		Russian convention. The thick zig-zag line is the graph of the \emph{profile}
		$\omega_{D(\xi)}\colon \R\to [0,\infty)$.} \label{fig:Russian}
\end{figure}

\subsubsection{Shifted Russian convention for drawing shifted Young diagrams}
\label{sec:shiftedRussian} 

In the French convention we draw shifted Young diagrams on the plane using the
Cartesian coordinate system $OXY$, cf.~\cref{fig:strict}, but for asymptotic
questions it is convenient to draw them using the \emph{shifted Russian convention}
\cite[Section 4.2.6]{DeStavolaThesis} which
corresponds to a new coordinate system $OZT$ on the plane given by
\begin{equation}
\label{eq:italian} 
z=x-y-\frac{1}{2},   \qquad t=x+y - \frac{1}{2},
\end{equation}
see \cref{fig:RussianDario}. In this convention, the boundary of a shifted Young
diagram~$\xi$ (shown on \cref{fig:Russian} by the thick zigzag line), called its
\emph{profile}, is a graph of a function $\omega_{\xi}\colon \R_+ \to \R_+$ on
the positive half-line. If necessary, the domain of the profile can be extended
to the whole real line by declaring that $\omega_{\xi}(-x)=\omega_{\xi}(x)$ for
any $x\geq 0$. The graph of such a profile $\omega_\xi\colon \R \to \R_+$ is
shown on \cref{fig:RussianDario} as the union of the thick solid and the thick
dashed lines.

\subsubsection{Russian convention for drawing Young diagrams}
\label{sec:Russian} 

In the French convention we draw the usual (non-shifted) Young diagrams on the
plane using the Cartesian coordinate system $OXY$, but for asymptotic questions
it is convenient to draw them using the \emph{Russian convention}
(cf.~\cref{fig:Russian}) which corresponds to a new coordinate system $OZT$ on
the plane given by
\[ z=x-y, \qquad t=x+y.\]
In this convention, the boundary of a Young diagram $\lambda$ (shown on
\cref{fig:Russian} by the thick zigzag line), called its \emph{profile} is a
graph of a function $\omega_{\lambda}\colon \R \to \R_+$.

\subsubsection{Continual Young diagrams. Dilations of (shifted) Young diagrams}

We will say that a function $\omega \colon \R \to \R_+$ is a \emph{continual Young diagram}
\cite{Kerov1993a,Kerov1998} if
\begin{itemize}
	\item $|\omega(z_1)-\omega(z_2)| \leq |z_1-z_2|$ for any $z_1,z_2\in\R$,
	\item $\omega(z)=|z|$ for sufficiently big values of $|z|$.
\end{itemize}

\medskip

For a real number $r>0$ we can draw the boxes of the Young diagram $\lambda$ as
squares of side $r$. Such an object  --- denoted $r \lambda$ and called
\emph{dilated Young diagram} --- is usually no longer a Young diagram, but its
profile $\omega_{r \lambda}$ is still well defined and is a continual Young
diagram; note that
\begin{equation}
\label{eq:dilation} 
\omega_{r \lambda}(z) = r \  \omega_{\lambda}\left(\frac{z}{r}\right)
\qquad\text{for any } z\in\R
\end{equation}
which can be viewed as an alternative definition of $\omega_{r \lambda}$.

For a \emph{shifted} Young diagram $\xi$ the analogous operation of drawing
boxes as squares of side $r>0$ and then looking on the profile of the resulting
object is more delicate because we would have to adjust the additive terms in
the linear transformations \eqref{eq:italian} to the new size of the boxes. 
For this reason we take the
following\ analogue of \eqref{eq:dilation}:
\[ \omega_{r \xi}(z) = r \  \omega_{\xi}\left(\frac{z}{r}\right)
\qquad\text{for any } z\in\R\]
as the \emph{definition} of the profile of the dilated diagram $r \xi$.

\subsection{Shifted tableaux}
\label{sec:notations-end}

We recall that a \emph{shifted tableau}  is a filling of the boxes of a given
shifted Young diagram $\xi$ which is weakly increasing along the rows and
strictly increasing along the columns. Such a tableau is \emph{standard} if each
of the numbers $1,2,\dots,n$ appears as an entry exactly once, where $n=|\xi|$
is the number of the boxes, cf.~top of \cref{fig:SYT} for an example. 
For a shifted tableau $T$ of shape $\xi$ we denote by $T_{x,y}$ its entry in
$x$-th column and $y$-th row for integers $x,y$ such that $1\leq y< x\leq
y+\xi_y$. For any integer $0\leq i\leq n$ we denote by $T_{\leq
	i}=(\zeta_1,\zeta_2,\dots) \in \SP_i$ the strict partition which corresponds to
the set of boxes of $T$ occupied by numbers which are $\leq i$; in other words
\[ \zeta_y = \# \big\{ x :  y< x \leq y + \xi_y \text{ and } T_{x,y}\leq i \big\}.\]

\medskip

In the context of the representation theory of the symmetric groups,
\emph{shifted} tableaux play an analogous role (for \emph{spin} representations)
as the usual (\emph{``non-shifted''}) tableaux (for \emph{linear}
representations) and several classical combinatorial algorithms for tableaux
have their shifted counterparts
\cite{Worley1984,Sagan1987,Stembridge1989,Serrano2010}.

\medskip

A special role is played by the \emph{staircase strict partition}
\begin{equation} 
\label{eq:staircase}
\Delta_k=(k,\dots,3,2,1)
\end{equation}
and by shifted tableaux with this shape; we call the latter \emph{staircase
shifted tableaux}. Such staircase shifted tableaux --- apart from the
aforementioned general context of the representation theory --- appear in the
combinatorics of the Coxeter groups; more specifically they are in a bijective
correspondence with a natural class of objects which can be described in several
equivalent ways:
\begin{itemize}
	\item maximum length chains in the Tamari lattice  \cite{Fishel2014},

	\item maximal chains in weak Bruhat order on $312$-avoiding permutations in
$\Sym{n}$ \cite{Fishel2014} which are also known under the name of
\emph{$312$-avoiding sorting networks},
	
	\item \emph{both $132$- and $312$-avoiding sorting networks} 
	\cite[Proposition 3.12]{Linusson2018a},
	 
	\item the \emph{commutation class} of the word 
	\[\mathbf{w}_0=(s_1s_2\cdots s_{k-1})(s_1 s_2\cdots s_{k-2})\cdots(s_1s_2)(s_1)\] 
	in the symmetric group $\Sym{k}$ \cite{Schilling2017}.
	
\end{itemize}

\subsection{The first example: random shifted standard tableaux with prescribed shape}
\label{sec:examples-start}

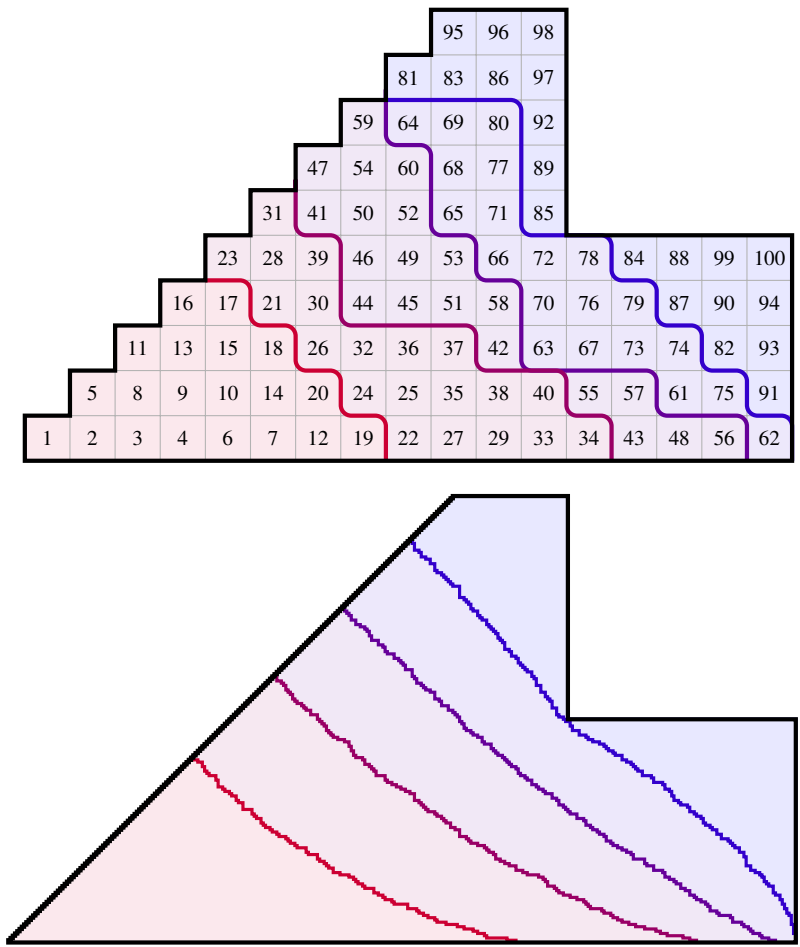
\begin{figure}
	\centering
	\begin{tikzpicture}[scale=0.6]

\draw(-0.5,0); 
 \draw[draw=none,fill={rgb:blue!100!red,1;white,10}](9,10) -- (9,9) -- (8,9) -- (8,8) -- (7,8) -- (7,7) -- (6,7) -- (6,6) -- (5,6) -- (5,5) -- (4,5) -- (4,4) -- (3,4) -- (3,3) -- (2,3) -- (2,2) -- (1,2) -- (1,1) -- (0,1) -- (0,0) -- (17,0) -- (17,5) -- (12,5) -- (12,10) -- cycle;
\draw[draw=none,fill={rgb:blue!80!red,1;white,10}](7,8) -- (7,7) -- (6,7) -- (6,6) -- (5,6) -- (5,5) -- (4,5) -- (4,4) -- (3,4) -- (3,3) -- (2,3) -- (2,2) -- (1,2) -- (1,1) -- (0,1) -- (0,0) -- (17,0) -- (17,1) -- (16,1) -- (16,2) -- (15,2) -- (15,3) -- (14,3) -- (14,4) -- (13,4) -- (13,5) -- (11,5) -- (11,8) -- cycle;
\draw[draw=none,fill={rgb:blue!60!red,1;white,10}](7,8) -- (7,7) -- (6,7) -- (6,6) -- (5,6) -- (5,5) -- (4,5) -- (4,4) -- (3,4) -- (3,3) -- (2,3) -- (2,2) -- (1,2) -- (1,1) -- (0,1) -- (0,0) -- (16,0) -- (16,1) -- (14,1) -- (14,2) -- (11,2) -- (11,4) -- (10,4) -- (10,5) -- (9,5) -- (9,7) -- (8,7) -- (8,8) -- cycle;
\draw[draw=none,fill={rgb:blue!40!red,1;white,10}](5,6) -- (5,5) -- (4,5) -- (4,4) -- (3,4) -- (3,3) -- (2,3) -- (2,2) -- (1,2) -- (1,1) -- (0,1) -- (0,0) -- (13,0) -- (13,1) -- (12,1) -- (12,2) -- (10,2) -- (10,3) -- (7,3) -- (7,5) -- (6,5) -- (6,6) -- cycle;
\draw[draw=none,fill={rgb:blue!20!red,1;white,10}](3,4) -- (3,3) -- (2,3) -- (2,2) -- (1,2) -- (1,1) -- (0,1) -- (0,0) -- (8,0) -- (8,1) -- (7,1) -- (7,2) -- (6,2) -- (6,3) -- (5,3) -- (5,4) -- cycle;
\begin{scope}
\clip(9,10) -- (9,9) -- (8,9) -- (8,8) -- (7,8) -- (7,7) -- (6,7) -- (6,6) -- (5,6) -- (5,5) -- (4,5) -- (4,4) -- (3,4) -- (3,3) -- (2,3) -- (2,2) -- (1,2) -- (1,1) -- (0,1) -- (0,0) -- (17,0) -- (17,5) -- (12,5) -- (12,10);
\draw[black!30] (0,0) grid (500,500);
 \end{scope}
\draw[ultra thick,draw=blue!80!red,rounded corners](17,0) -- (17,1) -- (16,1) -- (16,2) -- (15,2) -- (15,3) -- (14,3) -- (14,4) -- (13,4) -- (13,5) -- (11,5) -- (11,8) -- (8,8);
\draw[ultra thick,draw=blue!60!red,rounded corners](16,0) -- (16,1) -- (14,1) -- (14,2) -- (11,2) -- (11,4) -- (10,4) -- (10,5) -- (9,5) -- (9,7) -- (8,7) -- (8,8) -- (8,8);
\draw[ultra thick,draw=blue!40!red,rounded corners](13,0) -- (13,1) -- (12,1) -- (12,2) -- (10,2) -- (10,3) -- (7,3) -- (7,5) -- (6,5) -- (6,6) -- (6,6);
\draw[ultra thick,draw=blue!20!red,rounded corners](8,0) -- (8,1) -- (7,1) -- (7,2) -- (6,2) -- (6,3) -- (5,3) -- (5,4) -- (4,4);
\draw[ultra thick](9,10) -- (9,9) -- (8,9) -- (8,8) -- (7,8) -- (7,7) -- (6,7) -- (6,6) -- (5,6) -- (5,5) -- (4,5) -- (4,4) -- (3,4) -- (3,3) -- (2,3) -- (2,2) -- (1,2) -- (1,1) -- (0,1) -- (0,0) -- (17,0) -- (17,5) -- (12,5) -- (12,10) -- cycle;
\tiny
\node at (0.5,0.5) {1}; 
\node at (1.5,0.5) {2}; 
\node at (2.5,0.5) {3}; 
\node at (3.5,0.5) {4}; 
\node at (4.5,0.5) {6}; 
\node at (5.5,0.5) {7}; 
\node at (6.5,0.5) {12}; 
\node at (7.5,0.5) {19}; 
\node at (8.5,0.5) {22}; 
\node at (9.5,0.5) {27}; 
\node at (10.5,0.5) {29}; 
\node at (11.5,0.5) {33}; 
\node at (12.5,0.5) {34}; 
\node at (13.5,0.5) {43}; 
\node at (14.5,0.5) {48}; 
\node at (15.5,0.5) {56}; 
\node at (16.5,0.5) {62}; 
\node at (1.5,1.5) {5}; 
\node at (2.5,1.5) {8}; 
\node at (3.5,1.5) {9}; 
\node at (4.5,1.5) {10}; 
\node at (5.5,1.5) {14}; 
\node at (6.5,1.5) {20}; 
\node at (7.5,1.5) {24}; 
\node at (8.5,1.5) {25}; 
\node at (9.5,1.5) {35}; 
\node at (10.5,1.5) {38}; 
\node at (11.5,1.5) {40}; 
\node at (12.5,1.5) {55}; 
\node at (13.5,1.5) {57}; 
\node at (14.5,1.5) {61}; 
\node at (15.5,1.5) {75}; 
\node at (16.5,1.5) {91}; 
\node at (2.5,2.5) {11}; 
\node at (3.5,2.5) {13}; 
\node at (4.5,2.5) {15}; 
\node at (5.5,2.5) {18}; 
\node at (6.5,2.5) {26}; 
\node at (7.5,2.5) {32}; 
\node at (8.5,2.5) {36}; 
\node at (9.5,2.5) {37}; 
\node at (10.5,2.5) {42}; 
\node at (11.5,2.5) {63}; 
\node at (12.5,2.5) {67}; 
\node at (13.5,2.5) {73}; 
\node at (14.5,2.5) {74}; 
\node at (15.5,2.5) {82}; 
\node at (16.5,2.5) {93}; 
\node at (3.5,3.5) {16}; 
\node at (4.5,3.5) {17}; 
\node at (5.5,3.5) {21}; 
\node at (6.5,3.5) {30}; 
\node at (7.5,3.5) {44}; 
\node at (8.5,3.5) {45}; 
\node at (9.5,3.5) {51}; 
\node at (10.5,3.5) {58}; 
\node at (11.5,3.5) {70}; 
\node at (12.5,3.5) {76}; 
\node at (13.5,3.5) {79}; 
\node at (14.5,3.5) {87}; 
\node at (15.5,3.5) {90}; 
\node at (16.5,3.5) {94}; 
\node at (4.5,4.5) {23}; 
\node at (5.5,4.5) {28}; 
\node at (6.5,4.5) {39}; 
\node at (7.5,4.5) {46}; 
\node at (8.5,4.5) {49}; 
\node at (9.5,4.5) {53}; 
\node at (10.5,4.5) {66}; 
\node at (11.5,4.5) {72}; 
\node at (12.5,4.5) {78}; 
\node at (13.5,4.5) {84}; 
\node at (14.5,4.5) {88}; 
\node at (15.5,4.5) {99}; 
\node at (16.5,4.5) {100}; 
\node at (5.5,5.5) {31}; 
\node at (6.5,5.5) {41}; 
\node at (7.5,5.5) {50}; 
\node at (8.5,5.5) {52}; 
\node at (9.5,5.5) {65}; 
\node at (10.5,5.5) {71}; 
\node at (11.5,5.5) {85}; 
\node at (6.5,6.5) {47}; 
\node at (7.5,6.5) {54}; 
\node at (8.5,6.5) {60}; 
\node at (9.5,6.5) {68}; 
\node at (10.5,6.5) {77}; 
\node at (11.5,6.5) {89}; 
\node at (7.5,7.5) {59}; 
\node at (8.5,7.5) {64}; 
\node at (9.5,7.5) {69}; 
\node at (10.5,7.5) {80}; 
\node at (11.5,7.5) {92}; 
\node at (8.5,8.5) {81}; 
\node at (9.5,8.5) {83}; 
\node at (10.5,8.5) {86}; 
\node at (11.5,8.5) {97}; 
\node at (9.5,9.5) {95}; 
\node at (10.5,9.5) {96}; 
\node at (11.5,9.5) {98}; 
 
\end{tikzpicture}
	
	\vspace{2ex}
	
	\subfile{FIGURES/level_curves-100.tex} \caption{\emph{Above:} random shifted
	standard Young tableau with $n=100$ boxes, sampled with the uniform
	distribution on the set of shifted standard tableaux with fixed shape
	$\xi=(17,16,\dots,13,\ 7,6,\dots,3)$.  The coloured  \emph{level curves} indicate
	positions of $20\%, 40\%, 60\%, 80\%$ of the boxes with the smallest numbers.
	\emph{Below:} analogous random tableau with $n=39600$ boxes. Individual boxes 
	and the numbers filling the tableau are not shown.} \label{fig:SYT}
\end{figure}

For $\xi\in\SP$ we denote by $\tableaux_\xi$ the set of shifted standard
tableaux with the presecribed shape $\xi$ and by $\PP_\xi$ the uniform
probability measure on~$\tableaux_\xi$. Such a uniformly random shifted tableau
can be generated by the shifted hook walk algorithm \cite{Sagan1980} and is an
important tool in some proofs of the hook length formula for the number
$|\tableaux_\xi|$ of shifted tableaux \cite{Sagan1980}.

\subsubsection{Limit shape for random shifted tableaux}

\label{sec:stacks-of-cubes}

Following the ideas of Pittel and Romik \cite{Pittel2007}, a shifted tableau $T$
with shape $\xi\in\SP_n$ can be regarded as a three-dimensional stack of cubes
over the two-dimensional shifted Young diagram $\xi$, with $T_{x,y}$ cubes
stacked over the square $[x-1,x]\times [y-1,y] \times \{0\}$. Alternatively, the
function $(x,y)\mapsto T_{\lceil x\rceil ,\lceil y\rceil}$ can be interpreted as the graph of the
(non-continuous) surface of the upper envelope of this stack.

It is convenient to rescale the unit boxes on the plane by the factor
$\frac{1}{\sqrt{2n}}$ in such a way that the area of $\xi$ becomes equal to
$\frac{1}{2}$, and to rescale the height of the cubes by the factor
$\frac{1}{n}$ in such a way that the heights of stacks of cubes are all between
$0$ and $1$. In this way we may ask asymptotic questions about large random
shifted tableaux using the language of random surfaces.

\medskip

Before reading the exact form of the following result we recommend to consult
the almost self-explanatory \cref{fig:SYT}.

The following result states a kind of \emph{Law of Large Numbers} result that if
the (scaled down) shapes of the strict partitions $\big( \xi^{(k)} \big)$
converge to some limit shape $\Lambda$ then the aforementioned random surface
which corresponds to a uniformly random shifted tableau
$T^{(k)}\in\tableaux_{\xi^{(k)}}$ \emph{converges in probability} towards some
deterministic surface $F\colon \Lambda\to [0,1]$ \emph{in the sense of level curves}.
The latter sense of convergence means that the (scaled down) region on the plane
occupied by the boxes with (scaled) height bounded from above by any fixed real
number $\alpha$ --- in the physical geography the boundary of such a region is a
curve called \emph{the contour curve} or \emph{the level curve} --- converges in
probability to the region where the surface $F$ takes values which are bounded
from above by the same level $\alpha$.

\begin{theorem}[Law of Large Numbers for random shifted tableaux]
\label{theo:randomSYT} 

\ \\ For each $k\geq 1$ let $\xi^{(k)}=(\xi^{(k)}_1,\xi^{(k)}_2,\dots)
\in\SP_{n_k}$ for some sequence $(n_k)$ of positive integers which tends to
infinity. We assume that the sequence of rescaled profiles converges to some
limit
\[ \Omega_{1}:= \lim_{k\to\infty} 
\omega_{\frac{1}{\sqrt{2 n_k}} \xi^{(k)}} \]
in the sense of pointwise convergence of functions on $\R_+$.
We denote by
\[ \Lambda = \big\{ (x,y) : 0\leq y\leq x \text{ and }
x+y < \Omega_1(x-y)\big\}
\] 
this limit shape drawn in the French coordinate system.

We also assume that the sequence $(\xi^{(k)})$ is \emph{$C$-balanced}
\cite{Biane1998}, i.e.~the length of the first row
satisfies the bound 
\[ \xi^{(k)}_1 < C \sqrt{n_k}\] 
for some constant $C>0$ and all integers $k\geq 1$.

\bigskip

Then there exists a function $F \colon \Lambda \to [0,1]$ and the corresponding
a family of level curves (drawn in the Russian convention) indexed by
$0<\alpha<1$, defined for $z\geq 0$ by
\[ \Omega_{\alpha}(z) = \sup \big\{ x+y: (x,y)\in\Lambda\text{ and } 
                               x-y=z \text{ and } F(x,y)\leq \alpha \big\}; \]
we use the convention that if the supremum above is taken over the empty set
then $\Omega_{\alpha}(z) = |z|$.

We denote by $T^{(k)}$ a random standard Young tableau, sampled with the uniform
distribution on $\tableaux_{\xi^{(k)}}$. Then for each $0<\alpha<1$ the
(rescaled by the factor $\frac{1}{\sqrt{2 n_k}}$) profile of the shifted Young
diagram $T^{(k)}_{\leq \alpha n_k }$ 
converges in probability to $\Omega_{\alpha}$. In other words, for each
$\epsilon>0$
\begin{equation}
\label{eq:LLN} 
\lim_{k\to\infty} \PP_{\xi^{(k)}}\Bigg\{
T^{(k)}\in\tableaux_{\xi^{(k)}} :  
\sup_{x\geq 0}
\left| \omega_{\frac{1}{\sqrt{2 n_k}} T^{(k)}_{\leq \alpha n_k  }}(x) - 
    \Omega_{\alpha}(x) \right| 
>\epsilon \Bigg\}  =0. 
\end{equation}

\end{theorem}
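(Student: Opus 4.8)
The plan is to derive \cref{theo:randomSYT} as a special case of the general machinery developed later in the paper---specifically \cref{theo:mainLLN}---by exhibiting a suitable sequence of reducible spin representations whose associated random shifted Young diagrams coincide with the intermediate partitions $T^{(k)}_{\leq i}$. First I would recall that a uniformly random shifted standard tableau of shape $\xi$ can be built one box at a time, and that the shifted tableau $T$ together with the sequence of its restrictions $T_{\leq 0} \subset T_{\leq 1} \subset \dots \subset T_{\leq n}$ is (up to a multiplicity factor given by the number $g^\xi$ of shifted standard tableaux of shape $\xi$, appearing in the hook length formula) the combinatorial avatar of the branching of the basic spin representation. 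Concretely, I would set up the sequence of representations so that the Plancherel-type measure it induces on $\SP_{n_k}$ is the point mass at $\xi^{(k)}$, and so that sampling an intermediate diagram $T^{(k)}_{\leq \alpha n_k}$ is the same as first picking a random shifted Young diagram of size $\alpha n_k$ according to the induced measure from the restricted representation. The $C$-balanced hypothesis and the convergence of the rescaled profiles $\omega_{\frac{1}{\sqrt{2n_k}}\xi^{(k)}}$ are exactly the inputs needed to verify the approximate factorization property (in the spin setup, \cref{theo:afp-for-spin}) and the growth/control conditions in the hypotheses of \cref{theo:mainLLN}.

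Next I would translate the conclusion of \cref{theo:mainLLN}, which is phrased in terms of convergence of the (rescaled) profile of the random shifted Young diagram to a deterministic continual Young diagram, into the ``level curves'' language of the statement. The key observation is that for each fixed $\alpha \in (0,1)$ the intermediate partition $T^{(k)}_{\leq \alpha n_k}$ is itself a random shifted Young diagram governed by a sequence of representations that still satisfies the approximate factorization property---this is because restriction of representations is compatible with the category-theoretic framework of \cref{sec:afc}---so \cref{theo:mainLLN} applies to it directly and yields a limiting profile which I would call $\Omega_\alpha$. One has to check that this $\Omega_\alpha$ agrees with the explicit description given in the statement via the function $F$; here I would invoke \cref{sec:recover-shape}, which recovers the limit shape from the limiting free cumulants / transition measure, together with the monotonicity in $\alpha$ (the partitions $T^{(k)}_{\leq i}$ are nested), to identify $F(x,y)$ as the infimum of those $\alpha$ for which $(x,y)$ lies below $\Omega_\alpha$, equivalently to define $\Omega_\alpha$ by the stated supremum formula. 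The convention $\Omega_\alpha(z) = |z|$ on the empty set corresponds to the region not yet covered by boxes with height $\leq \alpha$.

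The main obstacle will be verifying the hypotheses of \cref{theo:mainLLN} for the restricted representations uniformly in $k$---in particular, showing that the convergence of the full profiles $\omega_{\frac{1}{\sqrt{2n_k}}\xi^{(k)}}$ to $\Omega_1$ is enough to guarantee convergence of the relevant character ratios (equivalently, of the spin analogues of the normalized characters $\widetilde{\Ch}$ evaluated on the induced measures) for the size-$\alpha n_k$ sub-diagrams, with the correct $O\!\big(n_k^{-1/2}\big)$-type speed, and that the cumulants of these character ratios decay at the prescribed rate. This is where the $C$-balanced assumption is essential: it bounds $\xi^{(k)}_1$ and hence controls the support of the transition measure, preventing mass from escaping to infinity and ensuring the moment problem for the limiting transition measure is determinate. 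Once the approximate factorization property is in hand, the passage to the Law of Large Numbers for the profile, and then the upgrade to uniform (sup-norm) convergence as in \eqref{eq:LLN}, is routine: pointwise convergence of $\tfrac12$-Lipschitz functions that are eventually equal to $|z|$ on a common interval (guaranteed by the $C$-balanced bound) automatically improves to uniform convergence on $\R_+$, so no extra tightness argument is needed beyond what \cref{theo:mainLLN} already provides.
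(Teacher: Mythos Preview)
Your proposal is correct and follows essentially the same route as the paper: identify the law of $T^{(k)}_{\leq \alpha n_k}$ with the measure associated to the restriction $V^{\xi^{(k)}}\big\downarrow^{\SGA{n_k}}_{\SGA{\lfloor\alpha n_k\rfloor}}$ (this is \cref{prop:Bratteli-equal}), verify approximate factorization for the sequence of irreducible representations $V^{\xi^{(k)}}$ via the $C$-balanced hypothesis (the higher cumulants vanish trivially since the measure is a point mass, and the first cumulant is controlled on the generators $S^{\spin}_{2j}$), observe that restriction preserves approximate factorization because the character ratios are literally the same and only the normalization $n_k\mapsto\lfloor\alpha n_k\rfloor$ changes by a bounded factor, and then apply \cref{theo:mainLLN}. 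One small correction: the role of \cref{theo:afp-for-spin} is structural (it underlies \cref{theo:key-tool}), not the place where the hypotheses for this particular sequence are checked---the verification goes through condition \ref{item:AFP-gamma-special} of \cref{theo:key-tool} directly, using that point masses have vanishing higher probabilistic cumulants.
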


The proof and the exact form of the limit surface $F$ is postponed to
\cref{sec:proof-randomSYT}. The results of the current paper can be also used to
show that the fluctuations of the random surfaces $T^{(k)}$ around the limit
shape are Gaussian.

\begin{remark}
With some minor effort, an analogous result for the usual (non-shifted) tableaux can
be extracted from the work of Biane \cite[Theorem 1.5.1]{Biane1998}. This
non-shifted analogue was known to Biane; in particular \sniady
witnessed a presentation of Biane in Spring 2008 in which this kind of result was
stated by referring to computer simulations similar to the one from
\cref{fig:SYT}, see also \cite{SniadyHabilitation} in the context of
Gaussianity of fluctuations. Nevertheless, it seems that this non-shifted
version was never explicitly stated in the existing literature and for this
reason it was overlooked by the scientific community. For example, the authors
of \cite{Pittel2007} cite the work of Biane but do not seem to be aware of a partial
overlap of their result with \cite{Biane1998}.
\end{remark}

\subsubsection{Example: random staircase tableaux} The assumptions of the above
\cref{theo:randomSYT} are fulfilled for the sequence
\[
\xi^{(k)}=\Delta_k \in \SP_{n_k} 
\]
of staircase strict partitions, cf.~Equation \eqref{eq:staircase},
with $n_k=\binom{k+1}{2}$ and the limit shape
\[ \Omega_1(x) = 
\begin{cases}
2-|x|         & \text{for } |x| \leq  1 \\
          |x| & \text{for } |x|  > 1.
\end{cases}
\]
\cref{theo:randomSYT} is applicable and, as we shall see in
\cref{sec:example-LPS}, in this case the limit surface
\[ F(x,y)=L(x,y) \qquad 
                            \text{for } 0\leq y\leq x\leq 1 \]
is the restriction of the function $L$ described by Pittel and Romik
\cite[Section 1.1]{Pittel2007}. In this way we recover a part of the result of
Linusson, Potka, Sulzberger \cite[Theorem 3.8]{Linusson2018} who proved a
stronger version of this result (the authors of \cite{Linusson2018} used
convergence with respect to a stronger topology given by the pointwise convergence
of the entries of tableaux towards the limit surface $F$). Note that the latter
paper contains also large deviations results which do not seem to be accessible
by out methods.

\subsection{The second example: asymptotics of shifted Schur--Weyl measures}
\label{sec:examples-end}

\subsubsection{Shifted RSK correspondence}

\label{sec:shifted-RSK}
	
Let us fix some positive integers $n$ and $d$. 
We consider the ordered set \
\[ \Alphabet_d:= \left\{ \circled{1}<1<\circled{2}<2<\cdots<\circled{d}<d\right\}.\]
In the following we will use \emph{shifted RSK correspondence}
\cite{Worley1984,Sagan1987,Hoffman1992} in a very specific context (with the
notations of \cite[Theorem 8.1]{Sagan1987} this corresponds to the case when the
circled matrix $(a_{ij})_{i\in [n], j\in [d]}$ is such that $\sum_j a_{ij} =1 $
for any $i\in [n]$).
In this context shifted RSK is a bijection between:
\begin{itemize}[itemsep=2ex]
	\item the set 
 \[ \Omega_{n,d}:=\left\{ \mathbf{w}=(w_1,\dots,w_n) :
 w_1,\dots,w_n \in \Alphabet_d \right\} \]
of words of length $n$,  and 

\item pairs $(P,Q)$ of (generalized) tableaux of the same shape $\xi\in\SP_n$
which fulfil the following conditions.

The \emph{insertion tableau} $P$ is a \emph{generalized shifted tableau} which
means that it is a filling of the boxes of $\xi$ with the elements of
$\Alphabet_d$ which is weakly increasing along the rows and along the columns;
furthermore each circled symbol appears in each row at most once, and each
non-circled symbol appears in each column at most once.

The \emph{recording tableau} $Q$ is a filling of the boxes of
$\xi$ with the elements of the set $[n]:=\{1,\dots,n\}$ with the property that
each element of $[n]$ appears exactly once; furthermore, each row and each column
is increasing. Additionally, each non-diagonal entry of the tableau can be
circled or not; each diagonal entries is non-circled.
\end{itemize}

\medskip

We consider the discrete probability space $\Omega_{n,d}$ equipped with the
uniform measure. We are interested in the probability distribution of the random
variable $\xi=\xi(\mathbf{w})$. Since shifted RSK correspondence is a bijection,
this probability distribution on $\SP_n$ ---  called \emph{shifted Schur--Weyl
	measure} --- is explicitly given by
\begin{multline} 
\label{eq:SW-measure}
\SW{n}{d}(\xi) = \\
 \frac{\text{\#(generalized shifted tableaux of shape $\xi$ with entries in $\Alphabet_d$)} 
 	    \cdot \ g^\xi }{2^{\ell(\xi)}\  d^n },
\end{multline}
where $\ell(\xi)$ denotes the number of non-zero parts of the partition $\xi$
and $g^\xi:=|\tableaux_\xi|$ is the number of shifted standard tableaux of shape
$\xi$. This probability distribution also has a natural representation-theoretic
interpretation which we will be discussed later in \cref{sec:example-SW}.

\subsubsection{Asymptotics of shifted Schur--Weyl measures} 
\label{sec:asymptotic-SW}

Usually we draw boxes which constitute a shifted Young diagram $\xi\in\SP_n$ as
unit squares. However, as we already mentioned, for asymptotic problems it might
be beneficial to draw them as squares of side $\frac{1}{\sqrt{2n}}$ so that the
total area occupied by the boxes is equal to $\frac{1}{2}$. 

The following result states that random strict partitions distributed according
to shifted Schur--Weyl measures with carefully chosen parameters converge (after the
rescaling of boxes described above) in probability towards some explicit limit
shapes. The analogue of this result for non-shifted Schur--Weyl measures is due
to Biane \cite{Biane2001}.

\begin{theorem}[Law of large numbers for shifted Schur--Weyl measures]

\label{theo:schur-weyl}

  \ 
	
	Let $(d_n)$ be a sequence of positive integers with the property that the limit
	\[ c:= \lim_{n\to\infty} \frac{\sqrt{n}}{d_n} \]
	exists.
	
	Then there exists a function $\Omega^{\SWW}_c \colon \R_+ \to \R_+$ with the property
that for each $\epsilon>0$
	\[ \lim_{n\to\infty} \SW{n}{d_n}\left\{ \xi\in \SP_n : 
	        \sup_{x\geq 0} 
	        \left| \omega_{\frac{1}{\sqrt{2n}} \xi}(x) - \Omega^{\SWW}_{c}(x) \right|
	            > \epsilon \right\} = 0. \]

\end{theorem}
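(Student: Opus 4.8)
The plan is to recognize the shifted Schur--Weyl measure $\SW{n}{d_n}$ as the spin analogue of the measure attached to a natural sequence of reducible spin representations of $\Sym{n}$, and then to apply the general machinery of \cref{theo:key-tool} together with \cref{theo:mainLLN}. The starting point is the representation-theoretic interpretation of $\SW{n}{d}$ promised in \cref{sec:example-SW}: formula~\eqref{eq:SW-measure} shows that $\SW{n}{d}(\xi)$ is, up to the factor $g^\xi/2^{\ell(\xi)}$ which is exactly the (normalized) spin dimension of $\xi$, proportional to the multiplicity of the spin irreducible labelled by $\xi$ in the $n$-th tensor power of a fixed ``basic'' spin representation of $\Sym{d}$-type (the shifted counterpart of $(\C^d)^{\otimes n}$). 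Thus $\SW{n}{d_n}$ is the probability measure on $\SP_n$ canonically associated to this sequence of spin representations in the sense of \cref{sec:representations-and-measures}.

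The main work is then to verify the hypotheses of \cref{theo:mainLLN}: one must show that the normalized spin character ratios of this sequence enjoy the \emph{approximate factorization property} with the prescribed speed, and compute their ``derivative'' (the limiting free cumulants / the linear functionals on the Kerov--Olshanski algebra). Here I would invoke \cref{theo:afp-for-spin}, which transports approximate factorization between the linear and spin worlds via the link of \cite{Matsumoto2018c}: the shifted Schur--Weyl character is, after this transport, governed by the ordinary Schur--Weyl character on $\Sym{n}$ for the parameter $d_n$, for which approximate factorization and the exact rate are classical (Biane \cite{Biane1998,Biane2001}). Concretely, for a fixed conjugacy class indexed by an odd partition with $k$ parts, the normalized spin character evaluated on it scales like $n^{-k/2}$ times a polynomial in $\sqrt{n}/d_n$, so it converges to a constant when $\sqrt{n}/d_n \to c$; and the joint cumulants of several such characters decay one order faster in $n^{-1/2}$ per extra factor, which is exactly the approximate factorization rate required by \cref{theo:key-tool}. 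Feeding the limiting functional $\mathbf{s}$ obtained in this way into \cref{theo:mainLLN} produces a deterministic continual (shifted) Young diagram; this is the profile $\Omega^{\SWW}_c$, and the conclusion of \cref{theo:mainLLN} is precisely the stated convergence in probability of $\omega_{\frac{1}{\sqrt{2n}}\xi}$ to $\Omega^{\SWW}_c$ uniformly on $\R_+$. The $C$-balance condition needed for \cref{theo:mainLLN} holds automatically because approximate factorization at the stated rate already forces the first row of a $\SW{n}{d_n}$-random strict partition to be $O(\sqrt{n})$ with overwhelming probability.

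The step I expect to be the genuine obstacle is the character estimate: proving that the normalized spin characters of the shifted Schur--Weyl representation satisfy approximate factorization \emph{with the correct speed}, rather than merely converging. This is exactly the content that \cref{theo:afp-for-spin} is designed to deliver, so in practice the proof reduces to (i) identifying the spin representation whose associated measure is $\SW{n}{d_n}$, (ii) checking that its linear shadow under \cite{Matsumoto2018c} is (a twist of) the ordinary Schur--Weyl representation for $\Sym{n}$ with parameter $d_n$, and (iii) quoting the known approximate-factorization-with-rate statement for the non-shifted Schur--Weyl measure, as in Biane's analysis. The explicit description of $\Omega^{\SWW}_c$ — via the free-probabilistic/$R$-transform computation of the limiting shifted diagram from the functional $\mathbf{s}$ — is then a routine application of the recipe recalled in \cref{sec:recover-shape}, and is deferred to \cref{sec:example-SW} in parallel with the treatment of \cref{theo:randomSYT}.
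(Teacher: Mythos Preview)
Your overall architecture is right: identify $\SW{n}{d_n}$ as the measure $\PP^{V^{(n)}}$ attached to a concrete spin representation $V^{(n)}$ of $\SGA{n}$, verify the approximate factorization property for the sequence $(V^{(n)})$, and invoke \cref{theo:mainLLN}. Where you diverge from the paper is in the verification step, and there you make it much harder than necessary and misattribute the role of \cref{theo:afp-for-spin}.

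The paper's argument is elementary at this point. One takes $V^{\SWW}_{d,n}=(\C^d\oplus\C^d)^{\otimes n}$ with its standard spin $\Spin{n}$-action; Sergeev duality identifies the associated measure with $\SW{n}{d}$. The character ratio is computed in closed form:
\[
\chi^{\SWW}_{d,n}(\pi)=\frac{1}{(\sqrt{2}\,d)^{\|\pi\|}} \qquad (\pi\in\OP_n).
\]
This is \emph{exactly multiplicative} in $\pi$, so by the same observation as in \cref{example:Plancherel} all cumulants $\kumu_\ell$ with $\ell\geq 2$ vanish identically. Approximate factorization \eqref{eq:decay-cumualts-characters} is then immediate, and the limits \eqref{eq:freecumulants-afp}--\eqref{eq:covariancedisjoint-afp} are read off directly: $\free_{k+1}=(c/\sqrt{2})^{k-1}$ and $\covarianceDisjoint\equiv 0$. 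No transport to the linear world, no appeal to Biane's Schur--Weyl estimates, no \cref{theo:afp-for-spin}.

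Two specific corrections. First, \cref{theo:afp-for-spin} is not a device for transporting approximate factorization of \emph{characters of specific representations} between the linear and spin settings; it is the purely algebraic statement that $\id_\Gamma\colon\Gamma_\bullet\to\Gamma$ has the approximate factorization property, and its role is internal to the proof of \cref{theo:key-tool}. Second, \cref{theo:mainLLN} has no $C$-balanced hypothesis; that assumption belongs to \cref{theo:randomSYT}. Your proposed route through Biane's non-shifted result might be made to work, but it is a detour around a one-line computation.
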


The proof is postponed to \cref{sec:example-SW}; the exact form of the limit
curve will be discussed in \cref{sec:SW-measure-shape}.

This result is illustrated on \cref{fig:SW-theoretic,fig:SW-theoretic-2}. 

\begin{figure}
	\begin{tikzpicture}[scale=2]
	
	\draw [thick,->] (0,0) -- (3.5,0) node[anchor=west] {$z$};
	\draw [thick,->] (0,0) -- (0,3.5) node[anchor=south] {$t$};
	
	\foreach \x in {1,2,3}
	\draw (\x cm,1pt) -- (\x cm,-1pt) node[anchor=north] {\tiny $\x$};
	\foreach \y in {1,2,3}
	\draw (1pt,\y cm) -- (-1pt,\y cm) node[anchor=east] {\tiny $\y$};

	\draw[dotted] (0,0) -- (3,3);

	\begin{scope}[rotate=45,scale=sqrt(2)/40]
	\begin{scope}[xshift=-0.05cm]
	\draw[blue,fill=blue!10](23.00,23.10) -- (23.00,23.00) -- (22.90,23.00) --
	(22.90,22.90) -- (22.80,22.90) -- (22.80,22.80) -- (22.70,22.80) --
	(22.70,22.70) -- (22.60,22.70) -- (22.60,22.60) -- (22.50,22.60) --
	(22.50,22.50) -- (22.40,22.50) -- (22.40,22.40) -- (22.30,22.40) --
	(22.30,22.30) -- (22.20,22.30) -- (22.20,22.20) -- (22.10,22.20) --
	(22.10,22.10) -- (22.00,22.10) -- (22.00,22.00) -- (21.90,22.00) --
	(21.90,21.90) -- (21.80,21.90) -- (21.80,21.80) -- (21.70,21.80) --
	(21.70,21.70) -- (21.60,21.70) -- (21.60,21.60) -- (21.50,21.60) --
	(21.50,21.50) -- (21.40,21.50) -- (21.40,21.40) -- (21.30,21.40) --
	(21.30,21.30) -- (21.20,21.30) -- (21.20,21.20) -- (21.10,21.20) --
	(21.10,21.10) -- (21.00,21.10) -- (21.00,21.00) -- (20.90,21.00) --
	(20.90,20.90) -- (20.80,20.90) -- (20.80,20.80) -- (20.70,20.80) --
	(20.70,20.70) -- (20.60,20.70) -- (20.60,20.60) -- (20.50,20.60) --
	(20.50,20.50) -- (20.40,20.50) -- (20.40,20.40) -- (20.30,20.40) --
	(20.30,20.30) -- (20.20,20.30) -- (20.20,20.20) -- (20.10,20.20) --
	(20.10,20.10) -- (20.00,20.10) -- (20.00,20.00) -- (19.90,20.00) --
	(19.90,19.90) -- (19.80,19.90) -- (19.80,19.80) -- (19.70,19.80) --
	(19.70,19.70) -- (19.60,19.70) -- (19.60,19.60) -- (19.50,19.60) --
	(19.50,19.50) -- (19.40,19.50) -- (19.40,19.40) -- (19.30,19.40) --
	(19.30,19.30) -- (19.20,19.30) -- (19.20,19.20) -- (19.10,19.20) --
	(19.10,19.10) -- (19.00,19.10) -- (19.00,19.00) -- (18.90,19.00) --
	(18.90,18.90) -- (18.80,18.90) -- (18.80,18.80) -- (18.70,18.80) --
	(18.70,18.70) -- (18.60,18.70) -- (18.60,18.60) -- (18.50,18.60) --
	(18.50,18.50) -- (18.40,18.50) -- (18.40,18.40) -- (18.30,18.40) --
	(18.30,18.30) -- (18.20,18.30) -- (18.20,18.20) -- (18.10,18.20) --
	(18.10,18.10) -- (18.00,18.10) -- (18.00,18.00) -- (17.90,18.00) --
	(17.90,17.90) -- (17.80,17.90) -- (17.80,17.80) -- (17.70,17.80) --
	(17.70,17.70) -- (17.60,17.70) -- (17.60,17.60) -- (17.50,17.60) --
	(17.50,17.50) -- (17.40,17.50) -- (17.40,17.40) -- (17.30,17.40) --
	(17.30,17.30) -- (17.20,17.30) -- (17.20,17.20) -- (17.10,17.20) --
	(17.10,17.10) -- (17.00,17.10) -- (17.00,17.00) -- (16.90,17.00) --
	(16.90,16.90) -- (16.80,16.90) -- (16.80,16.80) -- (16.70,16.80) --
	(16.70,16.70) -- (16.60,16.70) -- (16.60,16.60) -- (16.50,16.60) --
	(16.50,16.50) -- (16.40,16.50) -- (16.40,16.40) -- (16.30,16.40) --
	(16.30,16.30) -- (16.20,16.30) -- (16.20,16.20) -- (16.10,16.20) --
	(16.10,16.10) -- (16.00,16.10) -- (16.00,16.00) -- (15.90,16.00) --
	(15.90,15.90) -- (15.80,15.90) -- (15.80,15.80) -- (15.70,15.80) --
	(15.70,15.70) -- (15.60,15.70) -- (15.60,15.60) -- (15.50,15.60) --
	(15.50,15.50) -- (15.40,15.50) -- (15.40,15.40) -- (15.30,15.40) --
	(15.30,15.30) -- (15.20,15.30) -- (15.20,15.20) -- (15.10,15.20) --
	(15.10,15.10) -- (15.00,15.10) -- (15.00,15.00) -- (14.90,15.00) --
	(14.90,14.90) -- (14.80,14.90) -- (14.80,14.80) -- (14.70,14.80) --
	(14.70,14.70) -- (14.60,14.70) -- (14.60,14.60) -- (14.50,14.60) --
	(14.50,14.50) -- (14.40,14.50) -- (14.40,14.40) -- (14.30,14.40) --
	(14.30,14.30) -- (14.20,14.30) -- (14.20,14.20) -- (14.10,14.20) --
	(14.10,14.10) -- (14.00,14.10) -- (14.00,14.00) -- (13.90,14.00) --
	(13.90,13.90) -- (13.80,13.90) -- (13.80,13.80) -- (13.70,13.80) --
	(13.70,13.70) -- (13.60,13.70) -- (13.60,13.60) -- (13.50,13.60) --
	(13.50,13.50) -- (13.40,13.50) -- (13.40,13.40) -- (13.30,13.40) --
	(13.30,13.30) -- (13.20,13.30) -- (13.20,13.20) -- (13.10,13.20) --
	(13.10,13.10) -- (13.00,13.10) -- (13.00,13.00) -- (12.90,13.00) --
	(12.90,12.90) -- (12.80,12.90) -- (12.80,12.80) -- (12.70,12.80) --
	(12.70,12.70) -- (12.60,12.70) -- (12.60,12.60) -- (12.50,12.60) --
	(12.50,12.50) -- (12.40,12.50) -- (12.40,12.40) -- (12.30,12.40) --
	(12.30,12.30) -- (12.20,12.30) -- (12.20,12.20) -- (12.10,12.20) --
	(12.10,12.10) -- (12.00,12.10) -- (12.00,12.00) -- (11.90,12.00) --
	(11.90,11.90) -- (11.80,11.90) -- (11.80,11.80) -- (11.70,11.80) --
	(11.70,11.70) -- (11.60,11.70) -- (11.60,11.60) -- (11.50,11.60) --
	(11.50,11.50) -- (11.40,11.50) -- (11.40,11.40) -- (11.30,11.40) --
	(11.30,11.30) -- (11.20,11.30) -- (11.20,11.20) -- (11.10,11.20) --
	(11.10,11.10) -- (11.00,11.10) -- (11.00,11.00) -- (10.90,11.00) --
	(10.90,10.90) -- (10.80,10.90) -- (10.80,10.80) -- (10.70,10.80) --
	(10.70,10.70) -- (10.60,10.70) -- (10.60,10.60) -- (10.50,10.60) --
	(10.50,10.50) -- (10.40,10.50) -- (10.40,10.40) -- (10.30,10.40) --
	(10.30,10.30) -- (10.20,10.30) -- (10.20,10.20) -- (10.10,10.20) --
	(10.10,10.10) -- (10.00,10.10) -- (10.00,10.00) -- (9.90,10.00) -- (9.90,9.90)
	-- (9.80,9.90) -- (9.80,9.80) -- (9.70,9.80) -- (9.70,9.70) -- (9.60,9.70) --
	(9.60,9.60) -- (9.50,9.60) -- (9.50,9.50) -- (9.40,9.50) -- (9.40,9.40) --
	(9.30,9.40) -- (9.30,9.30) -- (9.20,9.30) -- (9.20,9.20) -- (9.10,9.20) --
	(9.10,9.10) -- (9.00,9.10) -- (9.00,9.00) -- (8.90,9.00) -- (8.90,8.90) --
	(8.80,8.90) -- (8.80,8.80) -- (8.70,8.80) -- (8.70,8.70) -- (8.60,8.70) --
	(8.60,8.60) -- (8.50,8.60) -- (8.50,8.50) -- (8.40,8.50) -- (8.40,8.40) --
	(8.30,8.40) -- (8.30,8.30) -- (8.20,8.30) -- (8.20,8.20) -- (8.10,8.20) --
	(8.10,8.10) -- (8.00,8.10) -- (8.00,8.00) -- (7.90,8.00) -- (7.90,7.90) --
	(7.80,7.90) -- (7.80,7.80) -- (7.70,7.80) -- (7.70,7.70) -- (7.60,7.70) --
	(7.60,7.60) -- (7.50,7.60) -- (7.50,7.50) -- (7.40,7.50) -- (7.40,7.40) --
	(7.30,7.40) -- (7.30,7.30) -- (7.20,7.30) -- (7.20,7.20) -- (7.10,7.20) --
	(7.10,7.10) -- (7.00,7.10) -- (7.00,7.00) -- (6.90,7.00) -- (6.90,6.90) --
	(6.80,6.90) -- (6.80,6.80) -- (6.70,6.80) -- (6.70,6.70) -- (6.60,6.70) --
	(6.60,6.60) -- (6.50,6.60) -- (6.50,6.50) -- (6.40,6.50) -- (6.40,6.40) --
	(6.30,6.40) -- (6.30,6.30) -- (6.20,6.30) -- (6.20,6.20) -- (6.10,6.20) --
	(6.10,6.10) -- (6.00,6.10) -- (6.00,6.00) -- (5.90,6.00) -- (5.90,5.90) --
	(5.80,5.90) -- (5.80,5.80) -- (5.70,5.80) -- (5.70,5.70) -- (5.60,5.70) --
	(5.60,5.60) -- (5.50,5.60) -- (5.50,5.50) -- (5.40,5.50) -- (5.40,5.40) --
	(5.30,5.40) -- (5.30,5.30) -- (5.20,5.30) -- (5.20,5.20) -- (5.10,5.20) --
	(5.10,5.10) -- (5.00,5.10) -- (5.00,5.00) -- (4.90,5.00) -- (4.90,4.90) --
	(4.80,4.90) -- (4.80,4.80) -- (4.70,4.80) -- (4.70,4.70) -- (4.60,4.70) --
	(4.60,4.60) -- (4.50,4.60) -- (4.50,4.50) -- (4.40,4.50) -- (4.40,4.40) --
	(4.30,4.40) -- (4.30,4.30) -- (4.20,4.30) -- (4.20,4.20) -- (4.10,4.20) --
	(4.10,4.10) -- (4.00,4.10) -- (4.00,4.00) -- (3.90,4.00) -- (3.90,3.90) --
	(3.80,3.90) -- (3.80,3.80) -- (3.70,3.80) -- (3.70,3.70) -- (3.60,3.70) --
	(3.60,3.60) -- (3.50,3.60) -- (3.50,3.50) -- (3.40,3.50) -- (3.40,3.40) --
	(3.30,3.40) -- (3.30,3.30) -- (3.20,3.30) -- (3.20,3.20) -- (3.10,3.20) --
	(3.10,3.10) -- (3.00,3.10) -- (3.00,3.00) -- (2.90,3.00) -- (2.90,2.90) --
	(2.80,2.90) -- (2.80,2.80) -- (2.70,2.80) -- (2.70,2.70) -- (2.60,2.70) --
	(2.60,2.60) -- (2.50,2.60) -- (2.50,2.50) -- (2.40,2.50) -- (2.40,2.40) --
	(2.30,2.40) -- (2.30,2.30) -- (2.20,2.30) -- (2.20,2.20) -- (2.10,2.20) --
	(2.10,2.10) -- (2.00,2.10) -- (2.00,2.00) -- (1.90,2.00) -- (1.90,1.90) --
	(1.80,1.90) -- (1.80,1.80) -- (1.70,1.80) -- (1.70,1.70) -- (1.60,1.70) --
	(1.60,1.60) -- (1.50,1.60) -- (1.50,1.50) -- (1.40,1.50) -- (1.40,1.40) --
	(1.30,1.40) -- (1.30,1.30) -- (1.20,1.30) -- (1.20,1.20) -- (1.10,1.20) --
	(1.10,1.10) -- (1.00,1.10) -- (1.00,1.00) -- (0.90,1.00) -- (0.90,0.90) --
	(0.80,0.90) -- (0.80,0.80) -- (0.70,0.80) -- (0.70,0.70) -- (0.60,0.70) --
	(0.60,0.60) -- (0.50,0.60) -- (0.50,0.50) -- (0.40,0.50) -- (0.40,0.40) --
	(0.30,0.40) -- (0.30,0.30) -- (0.20,0.30) -- (0.20,0.20) -- (0.10,0.20) --
	(0.10,0.10) -- (0.00,0.10) -- (0.00,0.00) -- (93.10,0.00) -- (93.10,0.10) --
	(90.50,0.10) -- (90.50,0.20) -- (88.80,0.20) -- (88.80,0.30) -- (88.00,0.30) --
	(88.00,0.40) -- (87.50,0.40) -- (87.50,0.50) -- (85.70,0.50) -- (85.70,0.60) --
	(84.80,0.60) -- (84.80,0.70) -- (84.30,0.70) -- (84.30,0.80) -- (83.50,0.80) --
	(83.50,0.90) -- (83.20,0.90) -- (83.20,1.00) -- (82.60,1.00) -- (82.60,1.10) --
	(81.00,1.10) -- (81.00,1.20) -- (80.50,1.20) -- (80.50,1.30) -- (80.00,1.30) --
	(80.00,1.40) -- (78.60,1.40) -- (78.60,1.50) -- (78.50,1.50) -- (78.50,1.60) --
	(77.70,1.60) -- (77.70,1.70) -- (76.80,1.70) -- (76.80,1.80) -- (76.00,1.80) --
	(76.00,1.90) -- (75.40,1.90) -- (75.40,2.00) -- (74.40,2.00) -- (74.40,2.20) --
	(74.10,2.20) -- (74.10,2.30) -- (72.80,2.30) -- (72.80,2.40) -- (72.50,2.40) --
	(72.50,2.50) -- (72.00,2.50) -- (72.00,2.60) -- (71.90,2.60) -- (71.90,2.70) --
	(71.30,2.70) -- (71.30,2.80) -- (70.90,2.80) -- (70.90,2.90) -- (70.40,2.90) --
	(70.40,3.00) -- (69.70,3.00) -- (69.70,3.10) -- (69.40,3.10) -- (69.40,3.20) --
	(69.30,3.20) -- (69.30,3.30) -- (68.50,3.30) -- (68.50,3.40) -- (68.20,3.40) --
	(68.20,3.50) -- (67.90,3.50) -- (67.90,3.60) -- (67.20,3.60) -- (67.20,3.70) --
	(66.80,3.70) -- (66.80,3.80) -- (66.40,3.80) -- (66.40,3.90) -- (65.90,3.90) --
	(65.90,4.00) -- (65.70,4.00) -- (65.70,4.10) -- (65.10,4.10) -- (65.10,4.30) --
	(64.40,4.30) -- (64.40,4.40) -- (64.00,4.40) -- (64.00,4.50) -- (63.70,4.50) --
	(63.70,4.60) -- (63.60,4.60) -- (63.60,4.70) -- (63.00,4.70) -- (63.00,4.80) --
	(62.70,4.80) -- (62.70,4.90) -- (62.20,4.90) -- (62.20,5.00) -- (61.80,5.00) --
	(61.80,5.10) -- (61.40,5.10) -- (61.40,5.20) -- (61.10,5.20) -- (61.10,5.30) --
	(60.80,5.30) -- (60.80,5.40) -- (60.30,5.40) -- (60.30,5.50) -- (59.60,5.50) --
	(59.60,5.70) -- (59.40,5.70) -- (59.40,5.80) -- (58.90,5.80) -- (58.90,5.90) --
	(58.50,5.90) -- (58.50,6.00) -- (58.10,6.00) -- (58.10,6.10) -- (57.70,6.10) --
	(57.70,6.20) -- (57.30,6.20) -- (57.30,6.30) -- (56.50,6.30) -- (56.50,6.40) --
	(56.40,6.40) -- (56.40,6.50) -- (56.20,6.50) -- (56.20,6.60) -- (55.70,6.60) --
	(55.70,6.70) -- (55.40,6.70) -- (55.40,6.80) -- (55.00,6.80) -- (55.00,6.90) --
	(54.70,6.90) -- (54.70,7.00) -- (54.30,7.00) -- (54.30,7.20) -- (53.90,7.20) --
	(53.90,7.30) -- (53.70,7.30) -- (53.70,7.40) -- (53.30,7.40) -- (53.30,7.50) --
	(53.10,7.50) -- (53.10,7.60) -- (52.40,7.60) -- (52.40,7.80) -- (52.10,7.80) --
	(52.10,7.90) -- (51.90,7.90) -- (51.90,8.00) -- (51.40,8.00) -- (51.40,8.10) --
	(51.10,8.10) -- (51.10,8.20) -- (51.00,8.20) -- (51.00,8.30) -- (50.40,8.30) --
	(50.40,8.40) -- (49.90,8.40) -- (49.90,8.60) -- (49.60,8.60) -- (49.60,8.70) --
	(49.10,8.70) -- (49.10,8.80) -- (48.90,8.80) -- (48.90,8.90) -- (48.10,8.90) --
	(48.10,9.00) -- (47.90,9.00) -- (47.90,9.10) -- (47.80,9.10) -- (47.80,9.20) --
	(47.50,9.20) -- (47.50,9.30) -- (47.20,9.30) -- (47.20,9.40) -- (47.00,9.40) --
	(47.00,9.50) -- (46.90,9.50) -- (46.90,9.60) -- (46.60,9.60) -- (46.60,9.70) --
	(46.30,9.70) -- (46.30,9.80) -- (46.10,9.80) -- (46.10,10.00) -- (45.80,10.00)
	-- (45.80,10.10) -- (45.60,10.10) -- (45.60,10.20) -- (45.30,10.20) --
	(45.30,10.30) -- (45.20,10.30) -- (45.20,10.40) -- (44.80,10.40) --
	(44.80,10.50) -- (44.40,10.50) -- (44.40,10.70) -- (44.20,10.70) --
	(44.20,10.80) -- (43.60,10.80) -- (43.60,10.90) -- (43.30,10.90) --
	(43.30,11.00) -- (43.00,11.00) -- (43.00,11.10) -- (42.90,11.10) --
	(42.90,11.20) -- (42.50,11.20) -- (42.50,11.30) -- (42.20,11.30) --
	(42.20,11.40) -- (42.10,11.40) -- (42.10,11.50) -- (42.00,11.50) --
	(42.00,11.60) -- (41.70,11.60) -- (41.70,11.70) -- (41.60,11.70) --
	(41.60,11.80) -- (41.40,11.80) -- (41.40,11.90) -- (41.20,11.90) --
	(41.20,12.00) -- (40.70,12.00) -- (40.70,12.20) -- (40.50,12.20) --
	(40.50,12.30) -- (40.10,12.30) -- (40.10,12.40) -- (39.80,12.40) --
	(39.80,12.50) -- (39.60,12.50) -- (39.60,12.60) -- (39.40,12.60) --
	(39.40,12.70) -- (39.30,12.70) -- (39.30,12.80) -- (39.10,12.80) --
	(39.10,12.90) -- (38.90,12.90) -- (38.90,13.00) -- (38.40,13.00) --
	(38.40,13.10) -- (38.20,13.10) -- (38.20,13.20) -- (38.00,13.20) --
	(38.00,13.30) -- (37.80,13.30) -- (37.80,13.40) -- (37.70,13.40) --
	(37.70,13.50) -- (37.50,13.50) -- (37.50,13.60) -- (37.30,13.60) --
	(37.30,13.70) -- (37.00,13.70) -- (37.00,13.90) -- (36.60,13.90) --
	(36.60,14.00) -- (36.40,14.00) -- (36.40,14.10) -- (36.30,14.10) --
	(36.30,14.20) -- (35.90,14.20) -- (35.90,14.40) -- (35.70,14.40) --
	(35.70,14.50) -- (35.40,14.50) -- (35.40,14.60) -- (35.30,14.60) --
	(35.30,14.70) -- (35.10,14.70) -- (35.10,14.80) -- (34.80,14.80) --
	(34.80,14.90) -- (34.70,14.90) -- (34.70,15.00) -- (34.50,15.00) --
	(34.50,15.10) -- (34.30,15.10) -- (34.30,15.20) -- (34.20,15.20) --
	(34.20,15.30) -- (34.00,15.30) -- (34.00,15.40) -- (33.80,15.40) --
	(33.80,15.50) -- (33.50,15.50) -- (33.50,15.70) -- (33.20,15.70) --
	(33.20,15.80) -- (33.10,15.80) -- (33.10,16.00) -- (32.60,16.00) --
	(32.60,16.10) -- (32.40,16.10) -- (32.40,16.20) -- (32.20,16.20) --
	(32.20,16.30) -- (32.10,16.30) -- (32.10,16.50) -- (31.80,16.50) --
	(31.80,16.60) -- (31.70,16.60) -- (31.70,16.70) -- (31.50,16.70) --
	(31.50,16.90) -- (31.40,16.90) -- (31.40,17.00) -- (31.20,17.00) --
	(31.20,17.10) -- (31.10,17.10) -- (31.10,17.20) -- (30.90,17.20) --
	(30.90,17.30) -- (30.70,17.30) -- (30.70,17.40) -- (30.50,17.40) --
	(30.50,17.50) -- (30.30,17.50) -- (30.30,17.60) -- (30.20,17.60) --
	(30.20,17.70) -- (30.10,17.70) -- (30.10,17.80) -- (29.90,17.80) --
	(29.90,17.90) -- (29.80,17.90) -- (29.80,18.00) -- (29.50,18.00) --
	(29.50,18.10) -- (29.30,18.10) -- (29.30,18.30) -- (29.20,18.30) --
	(29.20,18.40) -- (29.00,18.40) -- (29.00,18.50) -- (28.70,18.50) --
	(28.70,18.60) -- (28.50,18.60) -- (28.50,18.70) -- (28.40,18.70) --
	(28.40,18.80) -- (28.20,18.80) -- (28.20,18.90) -- (28.10,18.90) --
	(28.10,19.00) -- (27.90,19.00) -- (27.90,19.10) -- (27.80,19.10) --
	(27.80,19.20) -- (27.70,19.20) -- (27.70,19.30) -- (27.50,19.30) --
	(27.50,19.40) -- (27.40,19.40) -- (27.40,19.50) -- (27.30,19.50) --
	(27.30,19.60) -- (27.20,19.60) -- (27.20,19.70) -- (27.10,19.70) --
	(27.10,19.80) -- (26.90,19.80) -- (26.90,20.00) -- (26.50,20.00) --
	(26.50,20.30) -- (26.40,20.30) -- (26.40,20.40) -- (26.20,20.40) --
	(26.20,20.50) -- (26.10,20.50) -- (26.10,20.60) -- (26.00,20.60) --
	(26.00,20.70) -- (25.90,20.70) -- (25.90,20.80) -- (25.70,20.80) --
	(25.70,20.90) -- (25.60,20.90) -- (25.60,21.00) -- (25.50,21.00) --
	(25.50,21.10) -- (25.40,21.10) -- (25.40,21.20) -- (25.20,21.20) --
	(25.20,21.30) -- (25.10,21.30) -- (25.10,21.40) -- (25.00,21.40) --
	(25.00,21.50) -- (24.80,21.50) -- (24.80,21.60) -- (24.70,21.60) --
	(24.70,21.80) -- (24.60,21.80) -- (24.60,22.00) -- (24.30,22.00) --
	(24.30,22.10) -- (24.10,22.10) -- (24.10,22.30) -- (23.90,22.30) --
	(23.90,22.50) -- (23.80,22.50) -- (23.80,22.70) -- (23.60,22.70) --
	(23.60,22.80) -- (23.50,22.80) -- (23.50,22.90) -- (23.40,22.90) --
	(23.40,23.00) -- (23.10,23.00) -- (23.10,23.10) -- cycle;
	\end{scope}
	\end{scope}

	\draw[red,ultra thick,opacity=0.4] plot[smooth] file {FIGURES/SchurWeyl-c=1.txt};
	
	\end{tikzpicture}

	\caption{The thick red line: the shape $\Omega^{\SWW}_{c}$ for the special case
	$c=1$ obtained from \eqref{eq:omega-concrete} by numerical integration. The blue
	area: scaled down random shifted partition (shown in the shifted Russian
	convention) sampled for the shifted Schur--Weyl measure $\SW{n}{d}$ for
	$n=80000$, $d=283$ and $c=\frac{\sqrt{n}}{d}\approx 1$.}
	
	\label{fig:SW-theoretic}	
\end{figure}

\begin{figure}
	\begin{tikzpicture}[scale=2]
	
	\draw [thick,->] (0,0) -- (3.5,0) node[anchor=west] {$z$};
	\draw [thick,->] (0,0) -- (0,3.5) node[anchor=south] {$t$};
	
	\foreach \x in {1,2,3}
	\draw (\x cm,1pt) -- (\x cm,-1pt) node[anchor=north] {\tiny $\x$};
	\foreach \y in {1,2,3}
	\draw (1pt,\y cm) -- (-1pt,\y cm) node[anchor=east] {\tiny $\y$};

	\draw[dotted] (0,0) -- (3,3);

	\begin{scope}[rotate=45,scale=sqrt(2)/40]
	\begin{scope}[xshift=-0.05cm]
	\draw[blue,fill=blue!10]
(14.00,14.10) -- (14.00,14.00) -- (13.90,14.00) -- (13.90,13.90) -- (13.80,13.90) -- (13.80,13.80) -- (13.70,13.80) -- (13.70,13.70) -- (13.60,13.70) -- (13.60,13.60) -- (13.50,13.60) -- (13.50,13.50) -- (13.40,13.50) -- (13.40,13.40) -- (13.30,13.40) -- (13.30,13.30) -- (13.20,13.30) -- (13.20,13.20) -- (13.10,13.20) -- (13.10,13.10) -- (13.00,13.10) -- (13.00,13.00) -- (12.90,13.00) -- (12.90,12.90) -- (12.80,12.90) -- (12.80,12.80) -- (12.70,12.80) -- (12.70,12.70) -- (12.60,12.70) -- (12.60,12.60) -- (12.50,12.60) -- (12.50,12.50) -- (12.40,12.50) -- (12.40,12.40) -- (12.30,12.40) -- (12.30,12.30) -- (12.20,12.30) -- (12.20,12.20) -- (12.10,12.20) -- (12.10,12.10) -- (12.00,12.10) -- (12.00,12.00) -- (11.90,12.00) -- (11.90,11.90) -- (11.80,11.90) -- (11.80,11.80) -- (11.70,11.80) -- (11.70,11.70) -- (11.60,11.70) -- (11.60,11.60) -- (11.50,11.60) -- (11.50,11.50) -- (11.40,11.50) -- (11.40,11.40) -- (11.30,11.40) -- (11.30,11.30) -- (11.20,11.30) -- (11.20,11.20) -- (11.10,11.20) -- (11.10,11.10) -- (11.00,11.10) -- (11.00,11.00) -- (10.90,11.00) -- (10.90,10.90) -- (10.80,10.90) -- (10.80,10.80) -- (10.70,10.80) -- (10.70,10.70) -- (10.60,10.70) -- (10.60,10.60) -- (10.50,10.60) -- (10.50,10.50) -- (10.40,10.50) -- (10.40,10.40) -- (10.30,10.40) -- (10.30,10.30) -- (10.20,10.30) -- (10.20,10.20) -- (10.10,10.20) -- (10.10,10.10) -- (10.00,10.10) -- (10.00,10.00) -- (9.90,10.00) -- (9.90,9.90) -- (9.80,9.90) -- (9.80,9.80) -- (9.70,9.80) -- (9.70,9.70) -- (9.60,9.70) -- (9.60,9.60) -- (9.50,9.60) -- (9.50,9.50) -- (9.40,9.50) -- (9.40,9.40) -- (9.30,9.40) -- (9.30,9.30) -- (9.20,9.30) -- (9.20,9.20) -- (9.10,9.20) -- (9.10,9.10) -- (9.00,9.10) -- (9.00,9.00) -- (8.90,9.00) -- (8.90,8.90) -- (8.80,8.90) -- (8.80,8.80) -- (8.70,8.80) -- (8.70,8.70) -- (8.60,8.70) -- (8.60,8.60) -- (8.50,8.60) -- (8.50,8.50) -- (8.40,8.50) -- (8.40,8.40) -- (8.30,8.40) -- (8.30,8.30) -- (8.20,8.30) -- (8.20,8.20) -- (8.10,8.20) -- (8.10,8.10) -- (8.00,8.10) -- (8.00,8.00) -- (7.90,8.00) -- (7.90,7.90) -- (7.80,7.90) -- (7.80,7.80) -- (7.70,7.80) -- (7.70,7.70) -- (7.60,7.70) -- (7.60,7.60) -- (7.50,7.60) -- (7.50,7.50) -- (7.40,7.50) -- (7.40,7.40) -- (7.30,7.40) -- (7.30,7.30) -- (7.20,7.30) -- (7.20,7.20) -- (7.10,7.20) -- (7.10,7.10) -- (7.00,7.10) -- (7.00,7.00) -- (6.90,7.00) -- (6.90,6.90) -- (6.80,6.90) -- (6.80,6.80) -- (6.70,6.80) -- (6.70,6.70) -- (6.60,6.70) -- (6.60,6.60) -- (6.50,6.60) -- (6.50,6.50) -- (6.40,6.50) -- (6.40,6.40) -- (6.30,6.40) -- (6.30,6.30) -- (6.20,6.30) -- (6.20,6.20) -- (6.10,6.20) -- (6.10,6.10) -- (6.00,6.10) -- (6.00,6.00) -- (5.90,6.00) -- (5.90,5.90) -- (5.80,5.90) -- (5.80,5.80) -- (5.70,5.80) -- (5.70,5.70) -- (5.60,5.70) -- (5.60,5.60) -- (5.50,5.60) -- (5.50,5.50) -- (5.40,5.50) -- (5.40,5.40) -- (5.30,5.40) -- (5.30,5.30) -- (5.20,5.30) -- (5.20,5.20) -- (5.10,5.20) -- (5.10,5.10) -- (5.00,5.10) -- (5.00,5.00) -- (4.90,5.00) -- (4.90,4.90) -- (4.80,4.90) -- (4.80,4.80) -- (4.70,4.80) -- (4.70,4.70) -- (4.60,4.70) -- (4.60,4.60) -- (4.50,4.60) -- (4.50,4.50) -- (4.40,4.50) -- (4.40,4.40) -- (4.30,4.40) -- (4.30,4.30) -- (4.20,4.30) -- (4.20,4.20) -- (4.10,4.20) -- (4.10,4.10) -- (4.00,4.10) -- (4.00,4.00) -- (3.90,4.00) -- (3.90,3.90) -- (3.80,3.90) -- (3.80,3.80) -- (3.70,3.80) -- (3.70,3.70) -- (3.60,3.70) -- (3.60,3.60) -- (3.50,3.60) -- (3.50,3.50) -- (3.40,3.50) -- (3.40,3.40) -- (3.30,3.40) -- (3.30,3.30) -- (3.20,3.30) -- (3.20,3.20) -- (3.10,3.20) -- (3.10,3.10) -- (3.00,3.10) -- (3.00,3.00) -- (2.90,3.00) -- (2.90,2.90) -- (2.80,2.90) -- (2.80,2.80) -- (2.70,2.80) -- (2.70,2.70) -- (2.60,2.70) -- (2.60,2.60) -- (2.50,2.60) -- (2.50,2.50) -- (2.40,2.50) -- (2.40,2.40) -- (2.30,2.40) -- (2.30,2.30) -- (2.20,2.30) -- (2.20,2.20) -- (2.10,2.20) -- (2.10,2.10) -- (2.00,2.10) -- (2.00,2.00) -- (1.90,2.00) -- (1.90,1.90) -- (1.80,1.90) -- (1.80,1.80) -- (1.70,1.80) -- (1.70,1.70) -- (1.60,1.70) -- (1.60,1.60) -- (1.50,1.60) -- (1.50,1.50) -- (1.40,1.50) -- (1.40,1.40) -- (1.30,1.40) -- (1.30,1.30) -- (1.20,1.30) -- (1.20,1.20) -- (1.10,1.20) -- (1.10,1.10) -- (1.00,1.10) -- (1.00,1.00) -- (0.90,1.00) -- (0.90,0.90) -- (0.80,0.90) -- (0.80,0.80) -- (0.70,0.80) -- (0.70,0.70) -- (0.60,0.70) -- (0.60,0.60) -- (0.50,0.60) -- (0.50,0.50) -- (0.40,0.50) -- (0.40,0.40) -- (0.30,0.40) -- (0.30,0.30) -- (0.20,0.30) -- (0.20,0.20) -- (0.10,0.20) -- (0.10,0.10) -- (0.00,0.10) -- (0.00,0.00) -- (115.80,0.00) -- (115.80,0.10) -- (114.10,0.10) -- (114.10,0.20) -- (113.20,0.20) -- (113.20,0.30) -- (111.60,0.30) -- (111.60,0.40) -- (108.80,0.40) -- (108.80,0.50) -- (107.90,0.50) -- (107.90,0.60) -- (106.50,0.60) -- (106.50,0.70) -- (105.90,0.70) -- (105.90,0.80) -- (104.20,0.80) -- (104.20,0.90) -- (101.80,0.90) -- (101.80,1.00) -- (100.80,1.00) -- (100.80,1.10) -- (100.40,1.10) -- (100.40,1.20) -- (99.10,1.20) -- (99.10,1.30) -- (98.00,1.30) -- (98.00,1.40) -- (97.70,1.40) -- (97.70,1.50) -- (96.30,1.50) -- (96.30,1.60) -- (95.30,1.60) -- (95.30,1.70) -- (94.70,1.70) -- (94.70,1.80) -- (94.10,1.80) -- (94.10,1.90) -- (92.70,1.90) -- (92.70,2.00) -- (92.30,2.00) -- (92.30,2.10) -- (91.50,2.10) -- (91.50,2.20) -- (90.70,2.20) -- (90.70,2.30) -- (90.30,2.30) -- (90.30,2.40) -- (89.10,2.40) -- (89.10,2.50) -- (88.80,2.50) -- (88.80,2.60) -- (88.00,2.60) -- (88.00,2.80) -- (86.80,2.80) -- (86.80,2.90) -- (85.80,2.90) -- (85.80,3.00) -- (85.10,3.00) -- (85.10,3.10) -- (84.10,3.10) -- (84.10,3.20) -- (83.60,3.20) -- (83.60,3.30) -- (82.90,3.30) -- (82.90,3.40) -- (82.70,3.40) -- (82.70,3.50) -- (81.70,3.50) -- (81.70,3.60) -- (81.30,3.60) -- (81.30,3.70) -- (80.20,3.70) -- (80.20,3.80) -- (79.80,3.80) -- (79.80,3.90) -- (79.40,3.90) -- (79.40,4.00) -- (78.90,4.00) -- (78.90,4.10) -- (77.90,4.10) -- (77.90,4.20) -- (76.90,4.20) -- (76.90,4.30) -- (76.10,4.30) -- (76.10,4.40) -- (76.00,4.40) -- (76.00,4.50) -- (75.30,4.50) -- (75.30,4.60) -- (74.90,4.60) -- (74.90,4.70) -- (73.90,4.70) -- (73.90,4.80) -- (73.80,4.80) -- (73.80,4.90) -- (72.60,4.90) -- (72.60,5.00) -- (71.90,5.00) -- (71.90,5.10) -- (71.40,5.10) -- (71.40,5.20) -- (70.70,5.20) -- (70.70,5.30) -- (70.60,5.30) -- (70.60,5.40) -- (69.70,5.40) -- (69.70,5.50) -- (69.40,5.50) -- (69.40,5.60) -- (68.80,5.60) -- (68.80,5.70) -- (67.90,5.70) -- (67.90,5.80) -- (67.50,5.80) -- (67.50,5.90) -- (67.20,5.90) -- (67.20,6.00) -- (66.60,6.00) -- (66.60,6.10) -- (65.90,6.10) -- (65.90,6.20) -- (65.20,6.20) -- (65.20,6.30) -- (64.90,6.30) -- (64.90,6.40) -- (64.10,6.40) -- (64.10,6.50) -- (63.50,6.50) -- (63.50,6.60) -- (63.10,6.60) -- (63.10,6.70) -- (62.70,6.70) -- (62.70,6.80) -- (62.10,6.80) -- (62.10,6.90) -- (61.80,6.90) -- (61.80,7.00) -- (61.40,7.00) -- (61.40,7.10) -- (60.60,7.10) -- (60.60,7.20) -- (60.40,7.20) -- (60.40,7.30) -- (60.20,7.30) -- (60.20,7.40) -- (59.50,7.40) -- (59.50,7.50) -- (59.20,7.50) -- (59.20,7.60) -- (58.60,7.60) -- (58.60,7.70) -- (58.30,7.70) -- (58.30,7.80) -- (57.50,7.80) -- (57.50,7.90) -- (57.10,7.90) -- (57.10,8.00) -- (56.10,8.00) -- (56.10,8.10) -- (55.70,8.10) -- (55.70,8.20) -- (55.20,8.20) -- (55.20,8.30) -- (54.80,8.30) -- (54.80,8.50) -- (53.80,8.50) -- (53.80,8.60) -- (53.30,8.60) -- (53.30,8.70) -- (53.10,8.70) -- (53.10,8.80) -- (52.30,8.80) -- (52.30,8.90) -- (51.90,8.90) -- (51.90,9.00) -- (51.30,9.00) -- (51.30,9.10) -- (51.00,9.10) -- (51.00,9.20) -- (50.20,9.20) -- (50.20,9.30) -- (50.10,9.30) -- (50.10,9.40) -- (49.40,9.40) -- (49.40,9.50) -- (48.90,9.50) -- (48.90,9.60) -- (48.80,9.60) -- (48.80,9.70) -- (48.10,9.70) -- (48.10,9.80) -- (47.90,9.80) -- (47.90,9.90) -- (47.20,9.90) -- (47.20,10.00) -- (46.50,10.00) -- (46.50,10.10) -- (46.40,10.10) -- (46.40,10.20) -- (45.30,10.20) -- (45.30,10.30) -- (44.90,10.30) -- (44.90,10.40) -- (44.60,10.40) -- (44.60,10.50) -- (44.30,10.50) -- (44.30,10.60) -- (43.60,10.60) -- (43.60,10.70) -- (43.10,10.70) -- (43.10,10.80) -- (42.90,10.80) -- (42.90,10.90) -- (41.80,10.90) -- (41.80,11.00) -- (41.70,11.00) -- (41.70,11.10) -- (41.10,11.10) -- (41.10,11.20) -- (40.90,11.20) -- (40.90,11.30) -- (39.90,11.30) -- (39.90,11.40) -- (39.80,11.40) -- (39.80,11.50) -- (39.30,11.50) -- (39.30,11.60) -- (38.40,11.60) -- (38.40,11.70) -- (38.20,11.70) -- (38.20,11.80) -- (37.70,11.80) -- (37.70,11.90) -- (37.30,11.90) -- (37.30,12.00) -- (36.70,12.00) -- (36.70,12.10) -- (36.00,12.10) -- (36.00,12.20) -- (35.80,12.20) -- (35.80,12.30) -- (35.60,12.30) -- (35.60,12.40) -- (35.30,12.40) -- (35.30,12.50) -- (34.20,12.50) -- (34.20,12.60) -- (34.10,12.60) -- (34.10,12.70) -- (33.10,12.70) -- (33.10,12.80) -- (32.60,12.80) -- (32.60,12.90) -- (31.90,12.90) -- (31.90,13.00) -- (31.20,13.00) -- (31.20,13.10) -- (30.50,13.10) -- (30.50,13.20) -- (30.30,13.20) -- (30.30,13.30) -- (29.70,13.30) -- (29.70,13.40) -- (29.40,13.40) -- (29.40,13.50) -- (29.00,13.50) -- (29.00,13.60) -- (28.20,13.60) -- (28.20,13.70) -- (27.80,13.70) -- (27.80,13.80) -- (27.20,13.80) -- (27.20,13.90) -- (26.00,13.90) -- (26.00,14.00) -- (25.70,14.00) -- (25.70,14.10) -- cycle;
	\end{scope}
	\end{scope}

	\draw[red,ultra thick,opacity=0.4] plot[smooth] file {FIGURES/Schur-Weyl-c=2.txt};
	
	\end{tikzpicture}

	\caption{The thick red line: the shape $\Omega^{\SWW}_{c}$ for the special case
		$c=2$ obtained from \eqref{eq:omega-concrete} by numerical integration. The blue
		area: scaled down random shifted partition (shown in the shifted Russian
		convention) sampled for the shifted Schur--Weyl measure $\SW{n}{d}$
		for $n=80000$, $d=141$ and $c=\frac{\sqrt{n}}{d}\approx
		2$.}
	
	\label{fig:SW-theoretic-2}	
\end{figure}

\subsubsection{Asymptotics of the insertion tableaux $P$}

\begin{figure}
	\centering \subfile{FIGURES/SchurWeyl-300-45000.tex} \caption{Insertion tableau
	obtained by applying shifted version of RSK algorithm to a random word of
	length $n=45000$ in the alphabet $\Alphabet_d$ with $d=300$. The boxes were
	drawn as squares of side $\frac{d}{n}=\frac{1}{150}$.
	In the context of	\cref{coro:schur-weyl-insertion} the Young diagram corresponds
	to the boxes with the rescaled height at most $t=\frac{d^2}{n} = 2$. 
	The level curves indicate positions of the boxes with rescaled height at most
	$t$ with: $\bullet$ \emph{the blue curve}: $t=1$, 
	$\bullet$ \emph{the burgundy curve}: $t=\frac{1}{2}$, 
	$\bullet$ \emph{the red curve}: $t=\frac{1}{4}$. 
} \label{fig:SW}
\end{figure} 

We continue the discussion of shifted RSK correspondence from
\cref{sec:shifted-RSK}. If we ignore that some of the entries of the insertion
tableau $P=P(\mathbf{w})$ are circled, we may represent $P$ as a stack of cubes,
just like we did it in \cref{sec:stacks-of-cubes}. This time, however, we
rescale all dimensions of the unit cubes by the factor $\frac{d}{n}$.

The following result states that such rescaled random surfaces \emph{converge in
probability} to some universal surface~$\LimitP$ \emph{in the sense of level curves}.
This result is
illustrated by a computer simulation on \cref{fig:SW}.

\begin{corollary}[Law of Large Numbers for insertion tableaux]
\label{coro:schur-weyl-insertion}
There exists a function 	
\[ \LimitP \colon \Big\{ (X,Y) : 0\leq Y \leq X \Big\}  \to \R_+ \] 
and the corresponding a family of level curves (drawn in the Russian convention)
indexed by $\alpha>0$, defined for $z\geq 0$ by
\[ \Omega^{\LimitP}_{\alpha}(z) = \sup \big\{ x+y: 0\leq y\leq x \text{ and }
x-y=z \text{ and } \LimitP(x,y)\leq \alpha \big\} \]
with the following property.

For any sequence $(d_n)$ of positive integers such that 
\[ 
\lim_{n\to\infty} \frac{d_n}{n} = 0\]
we have that
\[
\lim_{n\to\infty} \PP\Bigg\{ \mathbf{w}\in \Omega_{n,d} : 
\sup_{x\geq 0}
\left| \omega_{\frac{d}{n} \big( P(\mathbf{w}) \big)_{\leq \alpha \frac{n}{d} }}(x) 
      - \Omega^{\LimitP}_{\alpha}(x) \right| 
>\epsilon \Bigg\}  =0
\]
(for legibility we write $d=d_n$) holds true for any $\epsilon>0$ and any level $\alpha$ such that
\[ 0<\alpha<\liminf_{n\to\infty} \frac{d_n^2}{n}.\] 
\end{corollary}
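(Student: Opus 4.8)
The plan is to reduce the statement to the already established \cref{theo:schur-weyl} by a ``sub-alphabet'' argument: restricting the insertion tableau to an initial segment of its possible entries amounts to restricting the input word to an initial segment of the alphabet, and such a restricted word is again a uniform word over a smaller alphabet, hence its insertion shape is governed by a shifted Schur--Weyl measure. First I would record the combinatorial input. For a word $\mathbf{w}=(w_1,\dots,w_n)\in\Omega_{n,d}$ and an integer $1\le m\le d$ let $\mathbf{w}^{\le m}$ be the subword of $\mathbf{w}$ obtained by deleting all letters that exceed $m$ in $\Alphabet_d$; thus $\mathbf{w}^{\le m}$ is a word in the initial segment $\Alphabet_m=\{\circled{1}<1<\dots<\circled{m}<m\}$. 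Shifted RSK commutes with this restriction, $P(\mathbf{w}^{\le m})=\big(P(\mathbf{w})\big)_{\le m}$, where the right--hand side is the sub-diagram of $P(\mathbf{w})$ consisting of the boxes whose entry is $\le m$; this is the shifted analogue of the classical fact for ordinary RSK and should follow from the insertion rules of \cite{Sagan1987} (see also \cite{Worley1984,Hoffman1992}). In particular $\big(P(\mathbf{w})\big)_{\le m}$ is a genuine shifted Young diagram of size $N:=|\mathbf{w}^{\le m}|$. Moreover, if $\mathbf{w}$ is uniform on $\Omega_{n,d}$ then, conditionally on the set of positions that survive (in particular conditionally on $N$), the word $\mathbf{w}^{\le m}$ is uniform on $\Omega_{N,m}$; hence conditionally on $N$ the shape of $\big(P(\mathbf{w})\big)_{\le m}$ has law $\SW{N}{m}$.

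Next I would fix $0<\alpha<\liminf_{n} d_n^2/n$ and set $m=m_n:=\lfloor \alpha\, n/d_n\rfloor$, so that $m_n\to\infty$ and $m_n\le d_n$ for $n$ large, and write $N=N_n:=|\mathbf{w}^{\le m_n}|$. Since a given letter of $\mathbf{w}$ lies in $\Alphabet_{m_n}$ with probability $m_n/d_n$, the variable $N_n$ is $\mathrm{Binomial}(n,\,m_n/d_n)$ with mean $\E N_n=n\,m_n/d_n\sim \alpha\,n^2/d_n^2\to\infty$; by Chebyshev's inequality $N_n=(1+o(1))\,\alpha\,n^2/d_n^2$ with probability tending to $1$, and consequently both $\sqrt{N_n}/m_n\to 1/\sqrt{\alpha}$ and $r_n:=\sqrt{2N_n}\cdot d_n/n\to\sqrt{2\alpha}$ in probability.

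Then I would apply \cref{theo:schur-weyl}. Conditioning on $N_n$ and restricting to the overwhelmingly likely event on which the above approximations hold, \cref{theo:schur-weyl} --- or rather the argument behind it, which in fact produces the limit shape $\Omega^{\SWW}_{c}$ uniformly over all parameter pairs $(N,m)$ with $N\to\infty$ and $\sqrt{N}/m$ in a fixed small neighbourhood of $c$ --- yields $\sup_{x\ge 0}\big|\omega_{\frac{1}{\sqrt{2N_n}}(P(\mathbf{w}))_{\le m_n}}(x)-\Omega^{\SWW}_{1/\sqrt{\alpha}}(x)\big|\to 0$ in probability. It remains to change the normalisation: writing $\tfrac{d_n}{n}\,\xi=r_n\cdot\tfrac{1}{\sqrt{2N_n}}\,\xi$ and using the dilation identity $\omega_{r\eta}(z)=r\,\omega_{\eta}(z/r)$ we get $\omega_{\frac{d_n}{n}(P(\mathbf{w}))_{\le m_n}}(z)=r_n\,\omega_{\frac{1}{\sqrt{2N_n}}(P(\mathbf{w}))_{\le m_n}}(z/r_n)$, which by the previous display, together with $r_n\to\sqrt{2\alpha}$ and the elementary uniform continuity of $(r,z)\mapsto r\,\Omega^{\SWW}_{1/\sqrt{\alpha}}(z/r)$ (the diagram $\Omega^{\SWW}_{1/\sqrt{\alpha}}$ being a fixed continual Young diagram with bounded support), converges in probability, uniformly in $z\ge 0$, to $\Omega^{\LimitP}_{\alpha}(z):=\sqrt{2\alpha}\;\Omega^{\SWW}_{1/\sqrt{\alpha}}\!\big(z/\sqrt{2\alpha}\big)$. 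Taking this as the definition of $\Omega^{\LimitP}_{\alpha}$ for every $\alpha>0$ (note that $\Omega^{\SWW}_{c}$ is available for all $c>0$, and that the $\Omega^{\LimitP}_{\alpha}$ are nondecreasing in $\alpha$ since the diagrams $(P(\mathbf{w}))_{\le m}$ grow with $m$ while drawn at the fixed scale $d_n/n$), the surface is recovered by $\LimitP(x,y):=\inf\{\alpha>0:\Omega^{\LimitP}_{\alpha}(x-y)\ge x+y\}$, for which the relation between $\LimitP$ and the family $\Omega^{\LimitP}_{\alpha}$ asserted in the statement holds by construction, with the convention on the empty supremum arranging that $\Omega^{\LimitP}_\alpha(z)=|z|$ precisely where the level set is trivial.

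The step I expect to be the main obstacle is making the invocation of \cref{theo:schur-weyl} legitimate in the regime where both the number of colours $m_n=\lfloor\alpha n/d_n\rfloor$ and the word length $N_n$ vary, the latter being moreover random: one needs the convergence to $\Omega^{\SWW}_{c}$ to be robust under small perturbations of the parameters and to hold for all $(N,m)$ with $\sqrt{N}/m$ near $1/\sqrt{\alpha}$ (equivalently, continuity of $c\mapsto\Omega^{\SWW}_{c}$ together with some uniformity in the limit theorem), a feature that should be transparent from the approximate-factorization-property machinery underlying \cref{theo:schur-weyl} but that deserves an explicit short argument, e.g.\ via conditioning on $N_n$ and a subsequence extraction. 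A lesser, bookkeeping-type point to check carefully is the commutation of shifted RSK with restriction to an initial segment of $\Alphabet_d$ in the specific circled set-up used here, together with the precise matching of the scaling constant $\sqrt{2\alpha}$ throughout.
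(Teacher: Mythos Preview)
Your proposal is correct and follows essentially the same route as the paper's proof: restrict the word to the initial segment $\Alphabet_{m_n}$ with $m_n=\lfloor\alpha n/d_n\rfloor$, observe that the restricted word has random length $N_n\sim\operatorname{Bin}(n,m_n/d_n)$ with $N_n/(\alpha n^2/d_n^2)\to 1$, apply \cref{theo:schur-weyl} with $c=1/\sqrt{\alpha}$, and then dilate by $\sqrt{2\alpha}$ to obtain $\Omega^{\LimitP}_{\alpha}(z)=\sqrt{2\alpha}\,\Omega^{\SWW}_{1/\sqrt{\alpha}}(z/\sqrt{2\alpha})$. You are in fact more scrupulous than the paper about the two delicate points---the commutation of shifted RSK with alphabet restriction and the uniformity needed to apply \cref{theo:schur-weyl} with random $N_n$---both of which the paper passes over without comment; your assessment that these are the places requiring care is accurate.
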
	

The proof is postponed to \cref{sec:example-SW}.

\subsubsection{Asymptotics of recording tableaux $Q$}

\begin{figure}
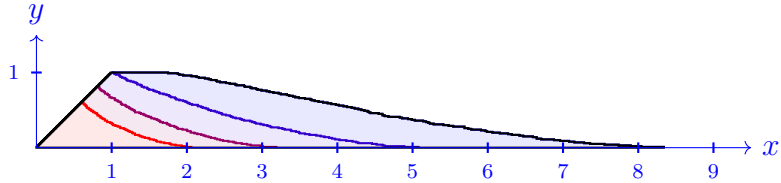

	\centering \subfile{FIGURES/recording_level_curves.tex} 
	
	\caption{Recording tableau obtained by applying shifted version of RSK
	algorithm to a random word of length $n=40000$ in the alphabet $\Alphabet_d$
	with $d=100$. The boxes were drawn as squares of side
	$\frac{1}{d}=\frac{1}{100}$. 
	In the context of \cref{coro:schur-weyl-recording} the Young diagram
	corresponds to the boxes with the rescaled height at most $t=\frac{n}{d^2} =
	2$. 
	The level curves indicate positions of the boxes with rescaled height at most
	$t$ with: $\bullet$ \emph{blue curve}: $t=1$, 
	$\bullet$ \emph{burgundy curve}: $t=\frac{1}{2}$ 
	$\bullet$ \emph{red curve}: $t=\frac{1}{4}$. 
} \label{fig:SW-recording}
\end{figure} 

Again, if we ignore that some of the entries of the recording tableau
$Q=Q(\mathbf{w})$ are circled, we may represent $Q$ as a stack of cubes, just
like we did it in \cref{sec:stacks-of-cubes}. This time, we rescale the unit
boxes on the plane by the factor $\frac{1}{d}$, and rescale the height of the
cubes by the factor $\frac{1}{d^2}$.

The following result states that such rescaled random surfaces converge \emph{in
probability} to some universal surface~$\LimitQ$ \emph{in the sense of the level curves}.
This result is
illustrated by a computer simulation on \cref{fig:SW-recording}.

\begin{corollary}[Law of Large Numbers for recording tableaux]
	\label{coro:schur-weyl-recording}
	There exists a function 	
	\[ \LimitQ \colon \Big\{ (X,Y) : 0\leq Y \leq 1, \ Y \leq X 
	\Big\}  \to \R_+ \] 
	and the corresponding a family of level curves (drawn in the Russian convention)
	indexed by $\alpha>0$, defined for $z\geq 0$ by
	\[ \Omega^{\LimitQ}_{\alpha}(z) = \sup \big\{ x+y: 0\leq y\leq x \text{ and } y\leq 1 \text{ and }
	x-y=z \text{ and } \LimitQ(x,y)\leq \alpha \big\} \]
	with the following property.

	For any sequence $(d_n)$ of positive integers such that 
	\[ \lim_{n\to\infty}  d_n = \infty\]
we have that
\[
\lim_{n\to\infty} \PP\Bigg\{ \mathbf{w}\in \Omega_{n,d} : 
\sup_{x\geq 0}
\left| \omega_{\frac{1}{d} \big( Q(\mathbf{w}) \big)_{\leq \alpha d^2 }}(x) - \Omega^{\LimitQ}_{\alpha}(x) \right| 
>\epsilon \Bigg\}  =0
\]
(for legibility we write $d=d_n$) holds true for any $\epsilon>0$ and any level $\alpha$ such that
\[ 0<\alpha<\liminf_{n\to\infty} \frac{n}{d_n^2}.\] 
\end{corollary}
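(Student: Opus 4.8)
The plan is to reduce \cref{coro:schur-weyl-recording} to \cref{theo:schur-weyl} by exploiting the defining ``growth'' property of the recording tableau under shifted RSK. The key observation is that, with the notation of \cref{sec:shifted-RSK}, if $\mathbf{w}=(w_1,\dots,w_n)\in\Omega_{n,d}$ then $\big(Q(\mathbf{w})\big)_{\leq i}$ is exactly the shape of the (generalized) insertion tableau produced from the prefix $(w_1,\dots,w_i)$; in particular it depends only on $(w_1,\dots,w_i)$. Since a prefix of a uniformly random word in $\Omega_{n,d}$ is a uniformly random word in $\Omega_{i,d}$, and since the shifted RSK bijection transports the uniform measure on $\Omega_{m,d}$ to the shifted Schur--Weyl measure $\SW{m}{d}$ on $\SP_m$ (this is precisely the combinatorial content of \eqref{eq:SW-measure}), it follows that for every $m\leq n$ the random strict partition $\big(Q(\mathbf{w})\big)_{\leq m}$, under the uniform measure on $\Omega_{n,d}$, has distribution $\SW{m}{d}$. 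Making this compatibility between the recording map and prefixes precise, directly from the description of shifted RSK in \cite{Worley1984,Sagan1987,Hoffman1992}, is the only genuinely combinatorial ingredient, and it is where I expect most of the care to be required.

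Granting it, I would fix a level $\alpha$ with $0<\alpha<\liminf_{n}\tfrac{n}{d_n^2}$ and set $m=m_n:=\lfloor\alpha d_n^2\rfloor$. The hypothesis on $\alpha$ ensures $m_n<n$ for all large $n$, so that $\big(Q(\mathbf{w})\big)_{\leq\alpha d^2}$ is a genuine sub-diagram, while $d_n\to\infty$ gives $m_n\to\infty$ and $\tfrac{\sqrt{m_n}}{d_n}\to\sqrt{\alpha}$. Applying \cref{theo:schur-weyl} with $n$ replaced by $m_n$ and parameter $c=\sqrt{\alpha}$ gives, in probability,
\[ \sup_{x\geq 0}\Big|\omega_{\frac{1}{\sqrt{2m_n}}\,(Q(\mathbf{w}))_{\leq\alpha d^2}}(x)-\Omega^{\SWW}_{\sqrt{\alpha}}(x)\Big|\ \xrightarrow[n\to\infty]{}\ 0, \]
where I have already used the distributional identity of the previous paragraph to replace $\SW{m_n}{d_n}$ by the uniform measure on $\Omega_{n,d_n}$. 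Next I would translate the normalisation $\tfrac{1}{\sqrt{2m_n}}$ into the normalisation $\tfrac{1}{d_n}$ of the corollary: the dilation identity \eqref{eq:dilation} applied twice gives $\omega_{\frac{1}{d}\xi}(z)=\tfrac{\sqrt{2m}}{d}\,\omega_{\frac{1}{\sqrt{2m}}\xi}\!\left(\tfrac{d}{\sqrt{2m}}z\right)$, and since $\tfrac{\sqrt{2m_n}}{d_n}\to\sqrt{2\alpha}$ while all profiles are $1$-Lipschitz and eventually equal to $z\mapsto z$ (so only a bounded range of $x$ needs estimating), the displayed convergence becomes
\[ \sup_{x\geq 0}\Big|\omega_{\frac{1}{d}\,(Q(\mathbf{w}))_{\leq\alpha d^2}}(x)-\Omega^{\LimitQ}_{\alpha}(x)\Big|\ \xrightarrow[n\to\infty]{}\ 0\quad\text{in probability},\qquad \Omega^{\LimitQ}_{\alpha}(z):=\sqrt{2\alpha}\;\Omega^{\SWW}_{\sqrt{\alpha}}\!\Big(\tfrac{z}{\sqrt{2\alpha}}\Big), \]
which is the asserted limit.

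Finally I would produce the surface $\LimitQ$ and check the bookkeeping. The family $\big(\Omega^{\LimitQ}_\alpha\big)_{\alpha>0}$ is nondecreasing in $\alpha$ --- this descends from the nesting $\big(Q(\mathbf{w})\big)_{\leq i}\subseteq\big(Q(\mathbf{w})\big)_{\leq j}$ for $i\leq j$, or directly from the corresponding monotonicity of $c\mapsto\Omega^{\SWW}_c$ recalled in \cref{sec:SW-measure-shape} --- and each $\Omega^{\LimitQ}_\alpha$ is a continual Young diagram; one may therefore define, for $(X,Y)$ with $0\le Y\le 1$ and $Y\le X$,
\[ \LimitQ(X,Y):=\inf\big\{\alpha>0:\ X+Y\le\Omega^{\LimitQ}_\alpha(X-Y)\big\}, \]
and verify that the level curves of $\LimitQ$ in the sense of the statement coincide with the functions $\Omega^{\LimitQ}_\alpha$ just obtained, the only points needing attention being those lying outside every rescaled diagram, where both sides equal $|z|$ by the stated convention. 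The domain restriction $Y\le1$ is forced by the standard structural fact that the shape produced by shifted RSK from a word over $\Alphabet_d$ has at most $d$ rows \cite{Worley1984,Sagan1987,Hoffman1992}, so that $\tfrac{1}{d}\big(Q(\mathbf{w})\big)_{\leq m}$ always lies in $\{Y\le1\}$, while the exhaustion of $\{0\le Y\le1,\ Y\le X\}$ by these level curves as $\alpha\uparrow\liminf_n\tfrac{n}{d_n^2}$ follows from the behaviour of $\Omega^{\SWW}_c$ as $c\to\infty$ described in \cref{sec:SW-measure-shape}.
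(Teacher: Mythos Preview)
Your proposal is correct and follows essentially the same route as the paper: identify the law of $\big(Q(\mathbf{w})\big)_{\leq \lfloor\alpha d^2\rfloor}$ as the shifted Schur--Weyl measure $\SW{\lfloor\alpha d^2\rfloor}{d}$, apply \cref{theo:schur-weyl} with $c=\sqrt{\alpha}$, and then rescale by the factor $\sqrt{2\alpha}$ to pass from the $\tfrac{1}{\sqrt{2m_n}}$ normalisation to the $\tfrac{1}{d_n}$ normalisation, finally reading off $\LimitQ$ from the family of level curves. Your write-up is in fact more detailed than the paper's own proof (which is quite terse on the prefix/distribution identification and on the construction of $\LimitQ$); the only caveat is that \cref{sec:SW-measure-shape} does not actually spell out the monotonicity in $c$ or the $c\to\infty$ behaviour you invoke, so you should justify those points directly (your nesting argument already handles the monotonicity).
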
	

The proof is postponed to \cref{sec:example-SW}.

\begin{remark}
\cref{coro:schur-weyl-insertion} and \cref{coro:schur-weyl-recording} have
non-shifted analogues which concern the asymptotic shapes of the insertion and
the recording tableaux when the usual (non-shifted) RSK correspondence is
applied to a random sequence of length $n$ with the entries selected from the
finite set~$[d]$. These analogues follow from the results of Biane \cite[Theorem
3]{Biane2001} in a way similar to the one presented in the proofs of
\cref{coro:schur-weyl-insertion} and \cref{coro:schur-weyl-recording}. Somewhat
surprisingly, it seems that these non-shifted analogues were not stated
explicitly in the existing literature.
\end{remark}

The results of the current paper can be also used to show that the fluctuations
of the random surfaces corresponding to the insertion and the recording tableaux
around the limit shapes are Gaussian.

\section{Spin representations and random strict partitions}
\label{sec:representations-and-measures}

We are ready now to present the abstract general theory which provides the link
between asymptotics of spin characters and the corresponding random strict
partitions.

\subsection{Character ratios}

\label{sec:spin-characters-spin-group-overview} 

For Reader's convenience we collected some very basic facts and notations from the spin
representation theory of the symmetric groups in
\cref{sec:projective-representations}. Nevertheless, most of these facts will
not be used in the following. The bare minimal necessary knowledge is condensed
in \cref{appendix:conclusion}.

\medskip

If $\irrepSp\colon \SGA{n}\to\GL(V) $ is a spin representation and $\pi\in\OP_n$
is an odd partition, we define the corresponding \emph{character ratio}
\begin{equation} 
\label{eq:character-ratio}
\chi^\irrepSp(\pi):=  \tr_V \irrepSp(c^\pi) =
\frac{\Tr \irrepSp(c^\pi)}{\dimm V} 
\end{equation}
as the (normalized) character of $\irrepSp$ evaluated on any element $c^\pi\in
C_\pi^+$ which belongs to the conjugacy class which corresponds to $\pi$,
cf.~\cref{appendix:conclusion}. Above
\begin{equation}
\label{eq:normalized-trace}
\tr_V=\frac{1}{\dimm V} \Tr
\end{equation} 
denotes the 
\emph{normalized trace}.

\medskip

For $\xi\in\SP_n$ and $\pi\in\OP_n$ we denote by
\[ \chi^{\xi}(\pi) =  \frac{\phi^\xi(\pi)}{\phi^\xi(1^n)}\] 
the character ratio \eqref{eq:character-ratio} which corresponds to (any)
irreducible spin representation given by $\xi$, cf.~\cref{appendix:conclusion}.

\subsection{Random strict partitions and reducible representations}
\label{sec:random-strict-reducible}

Let $\irrepSp\colon G\to \GL(V)$ be a representation of a finite group $G$ and let
\[ V = \bigoplus_{\xi\in\widehat{G}} n_\xi V^\xi \]
be its decomposition into irreducible components. Above,
$n_\xi\in\{0,1,2,\dots\}$ denotes the multiplicity of the irreducible
component $V^\xi$. We define a probability measure on the set $\widehat{G}$ of
the 
irreducible representations of $G$ given by
\begin{equation} 
\label{eq:probability-and-dimensions}
\PP^V (\xi):=\frac{n_\xi \dimm V^\xi}{\dimm V} 
\end{equation}
which can be interpreted as the probability distribution of a 
\emph{random irreducible component of $V$}.

\medskip

The main results of the current paper 
concern the random shifted Young diagrams given by the above construction in the
special case when $G=\Spin{n}$ is the spin group and $V$ is its reducible spin
representation. Equivalently, in order to ensure we deal with a \emph{spin}
representation, we may consider the analogue of the above construction in which
instead of a \emph{group representation} of $\Spin{n}$ we use an \emph{algebra
	representation} of $\SGA{n}$. 

Due to the correspondence between the irreducible
representations of $\SGA{n}$ and strict partitions of $n$, the measure $\PP^V$
can be equivalently viewed as a probability measure on $\SP_n$, see
\cref{sec:measure-more-precise} for some technical details.

\begin{example}[Strict Plancherel measure]
	\label{example:strict-plancherel} 
	
	The vector space $V:=\SGA{n}$ admits a
	natural action of the spin group algebra $\SGA{n}$ by left multiplication which
	can be regarded as an analogue of the left-regular representation of a group.
	
	The corresponding probability measure, called \emph{strict (or shifted)
	Plan\-che\-rel measure} \cite{Borodin1997,Ivanov1999a,Ivanov2004}, 
	is given by
	\begin{equation}
	\label{eq:Plancherel} 
	\PP^{\textrm{Plancherel}}_n(\xi) =
	\PP^{\SGA{n}}(\xi) = 
	     \frac{2^{n-\ell(\xi)}\ (g^\xi)^2}{n!} \qquad \text{for } \xi\in\SP_n,
	\end{equation}
	where $g^\xi=|\tableaux_\xi|$ denotes the number of shifted standard tableaux
	with shape given by the shifted Young diagram $\xi$.
	
Equivalently, \eqref{eq:Plancherel} is the probability distribution of the
common shape of the two shifted tableaux associated via shifted
Robinson--Schensted correspondence \cite{Worley1984,Sagan1987} to a uniformly
random circled permutation in $n$ letters. 
\end{example}

\subsection{Random strict partitions, revisited}
\label{sec:measure-more-precise}

Formally speaking, the construction from \cref{sec:random-strict-reducible}
associates to a given (reducible) representation $V$ of $\SGA{n}$ a probability
measure $\PP^V$ on the set of irreducible representations (or irreducible
characters) of $\SGA{n}$ which can be identified with the set
\begin{multline} 
\label{eq:all-characters}
\SP_n^+ \sqcup \SP_n^- \sqcup \SP_n^- = \\
\{ \phi^\xi : \xi \in \SP_n^+\} 
\cup 
\{ \phi_+^\xi : \xi \in \SP_n^-\} 
\cup
\{ \phi_-^\xi : \xi \in \SP_n^-\} 
\end{multline}
in which each element of $\SP_n^-$ is counted \emph{twice}, thus it is  \emph{not}
equal to $\SP_n$. However, if we identify the two copies of $\SP^-_n$ by
identifying the characters $\phi^\xi_{\pm}$ for $\xi\in\SP^-$ then $\PP^V$
becomes, as required, a probability measure on $\SP_n$ given by
\begin{equation} 
\label{eq:decomposition}
\PP^V(\xi) = 
\begin{cases}
\PP^V(\phi^{\xi}) & \text{if } \xi\in\SP^+, \\
\PP^V(\phi^{\xi}_+)+\PP^V(\phi^{\xi}_-) & \text{if } \xi\in\SP^-.
\end{cases} 
\end{equation}

An alternative solution to the above minor difficulty is to start with a reducible
\emph{superrepresentation} of $\SGA{n}$ and then to decompose it into
irreducible \emph{superrepresentations} which directly gives rise to a
probability distribution on strict partitions.

\subsection{Random strict partitions, alternative viewpoint}
\label{sec:only-ratio-matters}

If $V$ is a representation of of $\SGA{n}$ then (keeping in mind
\eqref{eq:decomposition}) the following equality between functions on $\OP_n$
holds true:
\begin{equation}
\label{eq:linear-combination} 
\chi^V = \sum_{\xi\in\SP_n}   \mathbb{P}^V(\xi)\ \chi^\xi. 
\end{equation}
Thanks to \cref{lem:linear-bases} below it follows that the coefficients $\left(
\mathbb{P}^V(\xi) \right)$ of this expansion are uniquely determined by
\eqref{eq:linear-combination}. 

\begin{lemma}
	\label{lem:linear-bases}
	The family of character ratios 
	\[ \left\{ \chi^\xi : \xi\in\SP_n \right\} \] 
	forms a linear basis of the space of (complex-valued) functions on $\OP_n$.
\end{lemma}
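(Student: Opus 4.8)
The plan is to prove the statement by a dimension count combined with a linear-independence argument, both of which follow from standard facts in the (spin) representation theory of the symmetric groups. First I would recall that the number of irreducible spin representations of $\SGA{n}$, counted so that each representation of negative type is counted once, equals $|\SP_n|$: this is the classical parametrization of irreducible spin characters by strict partitions. On the other hand, the space of complex-valued functions on $\OP_n$ has dimension $|\OP_n|$, the number of odd partitions of $n$. The key classical fact is that $|\SP_n| = |\OP_n|$ (Euler's identity: partitions into distinct parts are equinumerous with partitions into odd parts). Hence the family $\{\chi^\xi : \xi \in \SP_n\}$ has exactly the right cardinality to be a basis, and it suffices to prove that these functions are linearly independent.

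For the linear independence, the natural approach is to pass from character ratios $\chi^\xi$ to the (unnormalized) spin characters $\phi^\xi$, which differ only by the nonzero scalar $\phi^\xi(1^n) = \dim V^\xi$, and to invoke the orthogonality of irreducible characters. More precisely, the irreducible spin characters, when suitably interpreted as class functions on the relevant conjugacy classes (the ``odd'' or ``split'' classes indexed by $\OP_n$, together with the sign of the covering), form an orthogonal system with respect to an appropriate inner product; restricting attention to the classes indexed by $\OP_n$ and to one representative character per strict partition, distinct irreducible characters remain linearly independent because a nontrivial linear relation would contradict the linear independence of the full set of irreducible characters of the superalgebra $\SGA{n}$ (or, equivalently, of the twisted group algebra of $\Spin{n}$). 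I would cite the relevant statement from \cref{sec:projective-representations} / \cref{appendix:conclusion} for this orthogonality.

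Putting the two pieces together: we have $|\SP_n|$ linearly independent functions in a space of dimension $|\OP_n| = |\SP_n|$, so they span, and therefore form a basis. The main obstacle — really the only subtle point — is bookkeeping around the representations of negative type $\xi \in \SP_n^-$, for which there are \emph{two} associated characters $\phi^\xi_\pm$ that nonetheless have the same character ratio on $\OP_n$ (they differ only on the non-split classes, which lie outside $\OP_n$). One must check that identifying $\phi^\xi_+$ with $\phi^\xi_-$ on $\OP_n$ is exactly compatible with the identification that makes $\SP_n$ (rather than the disjoint union in \eqref{eq:all-characters}) the indexing set, so that no collisions or spurious dependencies are introduced; this is precisely why the restriction to $\OP_n$ — and not to all conjugacy classes of $\Spin{n}$ — is the right thing, and is the place where the equality $|\SP_n| = |\OP_n|$ (rather than a count involving the split classes) enters. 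I expect this verification to be short once the conventions of \cref{appendix:conclusion} are in place, and the rest of the argument to be routine.
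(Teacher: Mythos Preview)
Your plan is sound and uses the same ingredients as the paper --- the Euler identity $|\SP_n|=|\OP_n|$, the passage from $\chi^\xi$ to $\phi^\xi$, and the character theory of $\SGA{n}$ --- but it runs the argument in the opposite direction. The paper proves \emph{spanning} first and deduces independence from the cardinality match: restricting the full basis \eqref{eq:all-characters} of class functions on $\OP_n\cup\SP_n^-$ to $\OP_n$ automatically gives a spanning set, and since $\phi^\xi_+$ and $\phi^\xi_-$ coincide on $\OP_n$ this spanning set collapses to $\{\phi^\xi:\xi\in\SP_n\}$, whose cardinality equals the dimension. You instead propose to prove \emph{independence} first and deduce spanning.

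Your independence step has a small gap as written. The sentence ``a nontrivial linear relation would contradict the linear independence of the full set of irreducible characters'' does not hold without further work: restriction to $\OP_n$ is not injective on the span of the full character set (indeed $\phi^\xi_+-\phi^\xi_-$ restricts to zero for each $\xi\in\SP_n^-$), so a relation among the $\phi^\xi|_{\OP_n}$ does not automatically lift to a relation on all conjugacy classes. To repair your route you would need an extra input --- for instance that $\phi^\xi$ (for $\xi\in\SP_n^+$) and $\phi^\xi_++\phi^\xi_-$ (for $\xi\in\SP_n^-$) all vanish on the $\SP_n^-$ classes, or a direct orthogonality relation supported on $\OP_n$ alone --- and neither fact is recorded in \cref{sec:projective-representations}. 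The paper's spanning-first argument sidesteps this entirely, which is what it buys.
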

\begin{proof}
	The right-hand side of \eqref{eq:all-characters} is the complete collection of
	the characters of $\SGA{n}$ 
	hence it forms a
	linear basis of the space of complex-valued functions on the set of conjugacy
	classes $C_\pi^+$ over $\pi\in\OP_n \cup \SP_n^-$.
	
	It follows that 
	\begin{equation} 
	\label{eq:small-characters}
	\left\{ \phi^\xi : \xi\in\SP_n \right\} 
	\end{equation}
	spans the space of complex-valued functions on the set of conjugacy classes
	$C_\pi^+$ over $\pi\in\OP_n$. By the identity $|\SP_n|=|\OP_n|$, its
	cardinality matches the dimension hence \eqref{eq:small-characters} is a linear
	basis of the latter space.
	
	Since 
	\[\chi^\xi=\frac{1}{\phi^\xi(1^n)} \phi^\xi,\] 
	the claim follows immediately.
\end{proof}

\medskip

\emph{Equality \eqref{eq:linear-combination} can be viewed as an alternative
	definition of the probabilities $\mathbb{P}^V(\xi)$ as coefficients of the
	expansion of $\chi^V$ in the linear basis $(\chi^\xi)$.} This viewpoint has
interesting consequences. 

\medskip

Firstly, in order to state the results of the current paper we do not need the
representation $V$ and it is enough to speak about the corresponding character
ratio $\chi^V$. The property of approximate factorization
(\cref{def:afp-representations}) is, in fact, not a property of a sequence of
representations but of the corresponding sequence of character ratios.

\medskip

Secondly, for a superrepresentation of $\SGA{n}$, the construction from
\cref{sec:random-strict-reducible} gives a probability measure directly
on $\SP_n$, without the difficulties discussed in
\cref{sec:measure-more-precise}. 
Passage from the framework of representations to the framework of
superrepresentations implies that we should replace the family of characters
\eqref{eq:all-characters} by the characters of the irreducible superrepresentations
\[ \{ \phi^\xi : \xi \in \SP_n^+\} 
\cup 
\{ \phi_+^\xi + \phi_-^\xi : \xi \in \SP_n^-\}.\]
This change does not create any difficulties because the corresponding family of
character ratios (viewed as functions on $\OP_n$) remains the same.
By revisiting Equation \eqref{eq:linear-combination}
in the new context of superrepresentations we see that it still remains valid;
this shows that that the probability measure on $\SP_n$ associated to a
(super)representation $V$ remains the same, no matter if we regard $V$ as a
superrepresentation or as a representation.

\medskip

Thirdly, the (super)representation theory of the spin symmetric groups $\Sym{n}$
is known to be Morita equivalent to the (super)representation theory of
Hecke--Clifford algebra $\mathcal{H}_n=\mathcal{C}l_n \rtimes \C\Sym{n}$. In the
context of the (super)representations of $\mathcal{H}_n$ it still makes sense to
speak about the character ratios $\chi$ as functions on $\OP_n$; in this way the
results of the current paper can be reformulated in the language of the
(super)representations of $\mathcal{H}_n$ and the corresponding probability
measures.

\subsection{Hypothesis of the main results: approximate factorization of
	characters}
\label{sec:hypothesis-afc}

In this section we will present the hypothesis of the main results
of the current paper, \cref{theo:mainLLN,theo:mainCLT}.

\subsubsection{Cumulants of characters} 
\label{sec:cumulants-of-characters} 

We define a product of two odd partitions as their concatenation, followed by
arranging the entries in a weakly decreasing manner. In this way the set $\OP$
of odd partitions becomes a commutative monoid with the unit given by the empty
partition $\emptyset$. We consider the \emph{algebra of odd partitions}
$\C[\OP]$ which --- as a vector space --- is defined as the set of formal linear
combinations of odd partitions; the product corresponds to the above monoid
structure via distributivity of multiplication. Any function
$\chi\colon\OP\to\C$ on odd partitions extends by linearity to a linear map
$\chi\colon\C[\OP]\to\C$ on the odd partition algebra.

A convenient way to encode the information about a function $\chi\colon\OP\to\C$
with the property that $\chi(\emptyset)=1$ is to use \emph{cumulants}. More
specifically, for partitions $\pi^1,\dots,\pi^\ell\in\OP$ we define their
\emph{cumulant} (with respect to $\chi$)
\begin{multline}
\label{eq:cumulant}
\kumu_\ell^\chi(\pi^1,\dots,\pi^\ell):=
\left. \frac{\partial^\ell}{\partial t_1 \cdots \partial t_\ell} 
          \log \chi \left( e^{t_1 \pi^1+\cdots+t_\ell \pi^\ell} \right)
\right|_{t_1=\cdots=t_\ell=0}=\\
[t_1 \cdots t_\ell] \log \chi \left( e^{t_1 \pi^1+\cdots+t_\ell \pi^\ell} \right) 
\end{multline}
as a coefficient in the Taylor series of an analogue of the logarithm of the
(multidimensional) Laplace transform. The operations on the right-hand side
should be understood in the sense of formal power series with values in the odd
partitions algebra $\C[\OP]$. 
For example,
\begin{align*} 
\kumu_1(\pi^1) &= \chi(\pi^1), \\
\kumu_2(\pi^1,\pi^2) &= \chi(\pi^1\pi^2)- \chi(\pi^1)\ \chi(\pi^2).
\end{align*}
Informally speaking, the cumulants $\kumu_\ell^\chi$ (for $\ell\geq 2$) quantify the
extent to which $\chi\colon \OP\to\C$ fails to be a semigroup homomorphism (with
respect to the multiplication).

\medskip

We denote by $\OP_{\leq n}:=\bigcup_{0\leq m\leq n} \OP_m$ the set of odd
partitions of size smaller or equal than $n$. We will apply the above
construction of cumulants to the special case when $V$ is a (reducible) spin representation
of $\Spin{n}$ and $\chi=\chi^V$ is the character ratio
defined on $\OP_{\leq n}$ by extending the domain of \eqref{eq:character-ratio}
by padding the partition $\pi$ with additional ones:
\begin{equation} 
\label{eq:extension}
\chi^V(\pi):=  \tr_V \irrepSp(\pi,1^{n-|\pi|}) =
\frac{\Tr \irrepSp(\pi,1^{n-|\pi|})}{\dimm V} \qquad \text{for }\pi\in\OP_{\leq n}.
\end{equation}
Note that so defined $\chi^V$ is well-defined only on the set 
$\OP_{\leq n}$; in this way the cumulant
\eqref{eq:cumulant} is well-defined
as long as $|\pi^1|+\cdots+|\pi^\ell|\leq n$.

\subsubsection{Approximate factorization of characters}

For a partition $\pi=(\pi_1,\dots,\pi_\ell)$ with
$\pi_1,\dots,\pi_\ell\geq 1$ we define its \emph{length}
\[ \|\pi\|:=|\pi|-\ell = (\pi_1-1)+\cdots+(\pi_\ell-1) \]
as the difference of its size and its number of parts.

\begin{definition}
\label{def:afp-representations}
Assume that for each integer $n\geq 1$ 
we are given a spin representation
$\irrepSp^{(n)}\colon\Spin{n}\to \GL(V^{(n)})$.
We say that the sequence $( V^{(n)} )$ has
\emph{approximate factorization property}
if for each $l\geq 1$ and all $\pi^1,\dots,\pi^\ell\in\OP$ such that
each $\pi^i=(2k_i+1)$ is an odd partition which consists of exactly one part,
we have that
\begin{equation}
\label{eq:decay-cumualts-characters}  
\kumu_\ell^{V^{(n)} }(\pi^1,\dots,\pi^\ell) 
=O\left( n^{- \frac{\|\pi^1\|+\cdots+\|\pi^\ell\| + 2(\ell-1) }{2}} \right) \qquad \text{for } n\to\infty.
\end{equation}
\end{definition}

\begin{example}
	\label{example:Plancherel}
	We continue \cref{example:strict-plancherel}. The vector space $\SGA{n}$ is the
image of the left-regular representation $\C\Spin{n}$ under the projection
$\frac{1-z}{2}$. Since the character of the left-regular representation
vanishes on all group elements (except for the identity $1$), it follows that
the character of  $\SGA{n}$ vanishes on all group elements, except for $1$ and
$z$, and hence the corresponding character ratio is given by
\begin{align} 
\nonumber
\chi^{\SGA{n}}(\pi)  &= 
\begin{cases} 
1 & \text{if } \pi = (1^n), \\
0 & \text{otherwise},
\end{cases} \\
\intertext{for any $\pi\in \OP_n$. 
We extend the domain of the character ratio 
by \eqref{eq:extension} and obtain}
\label{eq:left-regular-character}
\chi^{\SGA{n}}(\pi) &= 
\begin{cases} 
1 & \text{if $\pi = (1^k)$ for some $0\leq k\leq n$}, \\
0 & \text{otherwise},
\end{cases}
\end{align}
for any $\pi\in \OP_{\leq n}$.

Since $\kumu_1^{\SGA{n}}(\pi)= \chi^{\SGA{n}}(\pi)$, we just calculated the
first cumulant as well.

\smallskip

Since the map \eqref{eq:left-regular-character} is a homomorphism (in the
somewhat restricted sense that $\chi^{\SGA{n}}(\pi^1 \pi^2)=
\chi^{\SGA{n}}(\pi^1) \chi^{\SGA{n}}(\pi^2)$ for all $\pi^1,\pi^2\in\OP$ such
that $|\pi^1|+|\pi^2|\leq n$), it follows immediately that all higher cumulants
$\kumu_\ell^{\SGA{n}}$ (for $\ell\geq 2$) vanish.

\smallskip

Now it is easy to check that the sequence of representations
$(\SGA{n})$ indeed has approximate factorization property.
\end{example}

\medskip

We will construct a whole class of examples later in
\cref{sec:example-SW,sec:proof-randomSYT}.

\subsection{Free cumulants}
\label{sec:free-cumulants}

It was noticed by Biane \cite{Biane1998,Biane2001} that for asymptotic problems
it is convenient to parametrize the set of Young diagrams by \emph{free
	cumulants}, quantities which originate in the random matrix theory and
Voiculescu's free probability \cite{Mingo2017}.  We review these quantities in
the following.

\medskip

For a continual Young diagram $\omega$ we consider a function
$\sigma_\omega\colon\R \to \R_+$ given by
\begin{equation}
\label{eq:sigma-measure}
 \sigma_\omega(z):= \frac{\omega(z)-|z|}{2}  
 \end{equation}
which can be viewed as the density of a measure on $\R$.

If $\omega=\omega_{r \lambda}$ is the profile of a rescaled Young diagram for a
Young diagram $\lambda$ and $r>0$ then the total weight of this measure 
\[ \int_\R \sigma_{r \lambda}(z) \dif z = r^2 |\lambda| \]
is equal to the area of the rescaled Young diagram $r \lambda$ (there are $|\lambda|$
boxes, each is a square of side $r$).

Similarly, if $\omega=\omega_{r \xi} \colon \R\to\R_+$ is the 
profile of a rescaled shifted Young diagram $\xi$ then
\[ \int_\R \sigma_{r \xi}(z) \dif z = 2 r^2 |\xi| \]
is the \emph{double} of the area of the rescaled shifted Young diagram $r \xi$
(the additional factor $2$ appears because we extended the domain of the profile
to the whole real line).

\bigskip

For a given
continual Young diagram $\omega$ and an integer $n\geq 2$ we define the rescaled moment of
the measure $\sigma_\omega$:
\begin{align}
\label{eq:s-as-moment}
S_n & = S_n(\omega)  = (n-1) \int_\R  z^{n-2} \sigma_\omega(z) \dif z.  \\
\intertext{Then the sequence of free cumulants $R_2,R_3,\dots$ is defined by}
\label{eq:r-in-s} 
R_n &= R_n(\omega)  = \sum_{l\geq 1} \frac{1}{l!} (-n+1)^{l-1}
\sum_{\substack{k_1,\dots,k_l\geq 2,\\ k_1+\cdots+k_l=n}} S_{k_1} \cdots
S_{k_l}. 
\\ 
\intertext{Conversely, the sequence of free cumulants determines
	uniquely the corresponding sequence of moments $S_2,S_3,\dots$ by the identity}
\label{eq:s-in-r}
S_n & = \sum_{l\geq 1} \frac{1}{l!} (n-1)^{\downarrow l-1}
\sum_{\substack{k_1,\dots,k_l\geq 2,\\ k_1+\cdots+k_l=n}} R_{k_1} \cdots
R_{k_l},
\end{align}
where $m^{\downarrow k}=m (m-1) \cdots (m-k+1)$ denotes the falling power, see
\cite[Section 3.2 and Proposition 2.2]{DolegaFeraySniady2008}.

\subsection{The first main result: random strict partitions concentrate around
	some limit shape}

In the following, in order to keep the notation lightweight, we will write
$\chi^{(n)}$, $\kumu_\ell^{(n)}$, $\mathbb{P}^{(n)}$ instead of $\chi^{V^{(n)}}$,
$\kumu_\ell^{V^{(n)} }$, $\mathbb{P}^{V^{(n)}}$, etc.

\begin{theorem}
	\label{theo:mainLLN} Assume that for each integer $n\geq 1$ we are given a spin
representation $\irrepSp^{(n)}\colon\Spin{n}\to \GL(V^{(n)})$ and assume that
the sequence $(V^{(n)})$ fulfils the approximate factorization property
(\cref{def:afp-representations}).

Additionally, we require that for all odd numbers $k,l\geq 3$ the following limits exist:
\begin{align}
\label{eq:freecumulants-afp} 
\free_{k+1} & := \lim_{n\to\infty} n^{\frac{k-1}{2}} \chi^{(n) }(k), \\
\label{eq:covariancedisjoint-afp}
\covarianceDisjoint_{k+1,l+1} & := 
\lim_{n\to\infty} n^{\frac{k+l}{2}} \cdot 2 \cdot \kumu_2^{(n) }\big( (k),(l) \big)
\\ 
& \nonumber = 
\lim_{n\to\infty} n^{\frac{k+l}{2}} \cdot 2 \cdot
\left(  \chi^{(n) }(k,l) - \chi^{(n) }(k)\ \chi^{(n) }(l)  \right)
\end{align}
(note that \cref{def:afp-representations} implies already that the expressions
under the limits are $O(1)$); and that and that the sequence $\free_2, \free_4,
\dots$ grows at most like a geometric sequence:
\[ \limsup_{k\to\infty} \sqrt[k]{\left| \free_k \right|} < \infty.\]

We denote by $\xi^{(n)}$ the random shifted Young diagram with the distribution
given by $\PP^{(n)}$. 

\medskip

	Then the sequence of rescaled shifted Young diagrams
	$\frac{1}{\sqrt{n}} \xi^{(n)}$ converges in probability towards some limit $\Omega$.
	In other words: there exists a unique continual Young diagram
$\Omega\colon\R\to [0,\infty)$ (\emph{``the limit shape''}) with the
property that for each $\epsilon>0$
	\[\lim_{n\to\infty} \PP^{(n)}\left( \xi^{(n)} \in \SP_n :
	 \left\| \omega_{\frac{1}{\sqrt{n}} \xi^{(n)} } - \Omega  \right\| > \epsilon \right) =0, \]
	 where $\| \cdot \|$ denotes the supremum norm.
	
	\smallskip
	
	This limit shape
	$\Omega \colon\R\to\R_+$ is uniquely determined by its free cumulants
	\begin{equation} 
	\label{eq:our-beloved-free-cumulants}
	R_k\left(\Omega\right) = 
	\begin{cases} \free_k
	& \text{if $k$ is even},
	\\
	0 & \text{if $k$ is odd}.
	\end{cases} 
	\end{equation}
\end{theorem}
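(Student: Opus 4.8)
The plan is to deduce \cref{theo:mainLLN} from \cref{theo:key-tool}, which (once proved) gives the equivalence between the approximate factorization property of $(V^{(n)})$ and the convergence in probability of the rescaled free cumulants of the associated random strict partitions; the two remaining tasks are then to identify the limits of these free cumulants and to upgrade their convergence to uniform convergence of the profiles.

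I would start from the dictionary between spin characters and polynomial functions on shifted Young diagrams set up in \cref{sec:ko}; the spin analogue of the Kerov--Olshanski algebra used there is built, via the link between the linear and the spin setups of \cite{Matsumoto2018c} (that is, via the double $D(\xi)$), on top of the classical one of Biane and \sniady. Two facts combine. First, the free cumulants $R_k$ of the profile of $\xi$ (extended evenly to $\R$) are isobaric polynomials in the normalized spin characters $\ChSpin_j(\xi)$. Second, by the definition of $\PP^{(n)}$ via \eqref{eq:linear-combination}, the extension \eqref{eq:extension}, and \cref{lem:linear-bases}, the random diagram $\xi^{(n)}$ distributed according to $\PP^{(n)}$ satisfies: $\E_{\PP^{(n)}}\ChSpin_{k}(\xi^{(n)})$ equals, up to an explicit combinatorial factor, the value of $\chi^{(n)}$ on the corresponding odd cycle padded with ones, while the mixed moments $\E\,\ChSpin_{k_1}\cdots\ChSpin_{k_r}(\xi^{(n)})$ are controlled by $\chi^{(n)}$ on products of odd cycles, up to lower-order corrections coming from overlaps of cycles. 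Hence \eqref{eq:freecumulants-afp} says precisely that $n^{-k/2}\,\E\ChSpin_{k}(\xi^{(n)})$ converges, and \eqref{eq:decay-cumualts-characters}, \eqref{eq:covariancedisjoint-afp}, fed through the cumulant--moment relations, say that the variance (indeed, every higher cumulant, which is what is needed for the companion \cref{theo:mainCLT}) of $n^{-k/2}\ChSpin_{k}(\xi^{(n)})$ tends to zero. By Chebyshev's inequality each rescaled normalized character, and therefore each rescaled free cumulant $R_k\bigl(\tfrac1{\sqrt n}\xi^{(n)}\bigr)$, converges in probability to a constant. The parity pattern of \eqref{eq:our-beloved-free-cumulants} needs no separate argument: the profile of any strict partition, extended evenly, is an even function, so $\sigma_\omega$ is an even density, all odd moments $S_{2j+1}$ vanish, and therefore --- since by \eqref{eq:r-in-s} every monomial contributing to an odd-indexed free cumulant contains an odd factor $S_{k_i}$ --- all odd free cumulants $R_{2j+1}$ vanish identically; the even ones converge to $\free_k$ by \eqref{eq:freecumulants-afp} and the Kerov-polynomial dictionary.

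It then remains to pass from convergence in probability of every free cumulant to uniform convergence of the profiles. Here I would invoke Kerov's asymptotic representation theory in the form of \cite{DolegaFeraySniady2008}, applied through the double, which is an honest Young diagram: the hypothesis $\limsup_k\sqrt[k]{|\free_k|}<\infty$ guarantees on the one hand that there is a \emph{unique} compactly supported continual Young diagram $\Omega$ whose free cumulants are given by \eqref{eq:our-beloved-free-cumulants} (the associated moment problem for $\sigma_\Omega$ being determinate), and on the other hand an a priori bound, valid with probability tending to one, on the support of $\tfrac1{\sqrt n}\xi^{(n)}$, that is, on $\xi^{(n)}_1/\sqrt n$. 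With such a uniform support bound in force, convergence in probability of all rescaled moments of $\sigma_{\frac1{\sqrt n}\xi^{(n)}}$ becomes equivalent to convergence in probability of $\omega_{\frac1{\sqrt n}\xi^{(n)}}$ in the supremum norm, and the limit is $\Omega$, which is even because all limiting odd free cumulants vanish. Tracing back through the rescalings of \cref{sec:drawing} and the identification of the (evenly extended) profile of $\xi$ with that of its double confirms that this is the limit shape in the sense asserted.

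The \textbf{main obstacle} is the support control in the preceding paragraph: one must exclude that $\xi^{(n)}_1$ exceeds a fixed multiple of $\sqrt n$ with non-vanishing probability, since otherwise convergence of the free cumulants alone would not force uniform convergence of the profiles. This is the point at which the at-most-geometric growth of $(\free_k)$ is used essentially --- it must be converted, through simultaneous control of $\E\ChSpin_{2k}(\xi^{(n)})$ over all $k$, into such an a priori estimate --- and it is the spin counterpart of the delicate bounds of \cite{Sniady2006}; in the present paper this is precisely where \cref{theo:afp-for-spin} is needed and where \cref{theo:key-tool} does its work. The remaining ingredients --- the character/polynomial-function dictionary, the polynomiality of free cumulants in normalized characters, and the vanishing of the odd free cumulants --- are classical or have already been established in the text.
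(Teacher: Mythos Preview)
Your proposal is correct and follows essentially the same route as the paper: reduce to \cref{theo:key-tool} to obtain the decay of the probabilistic cumulants of the $R_k^{\spin}$ (equivalently $\ChSpin_k$), deduce convergence in probability of each rescaled free cumulant via Chebyshev, and then upgrade to supremum-norm convergence of profiles using the geometric-growth hypothesis on $(\free_k)$ exactly as in \cite[Section~5]{DolegaSniady2018} (the paper in fact says nothing more than this, pointing to that reference and to De~Stavola's observation that the CLT already implies the LLN). Your identification of the support control as the one genuinely nontrivial step, and of the evenness of $\omega_\xi$ as the source of the vanishing odd free cumulants, is accurate.
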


The proof of this result is postponed to \cref{sec:proof-of-LLN}. This theorem
is analogous to a result of Biane \cite[Corollary 1]{Biane2001} who considered \emph{linear}
representations of the symmetric groups and the corresponding random
(non-shifted) Young diagrams. The assumptions of \cref{theo:mainLLN} can be
weakened to match the assumptions of the analogous result of Biane; for
simplicity we decided to have the same assumptions for \cref{theo:mainLLN} and
\cref{theo:mainCLT}.

In \cref{sec:plancherel} we will prove that in the special case considered in
\cref{example:Plancherel} when $V^{(n)}=\SGA{n}$ is the spin part of the
left-regular representation and the corresponding probability measure
$\mathbb{P}^{(n)}$ is the shifted Plancherel measure, the limit curve $\Omega$
coincides with the Logan--Shepp \& Vershik--Kerov curve
\cite{Logan1977,Vershik1977} which describes the limit shape of (non-shifted)
random Young diagram distributed to the (non-shifted) Plancherel measure. In
this case the proof is due to De Stavola \cite[Section 4.5]{DeStavolaThesis};
this result was conjectured earlier by the authors of \cite{Bernstein2007}.

\subsection{The second main result: Gaussian fluctuations}
\label{sec:gaussian-fluctuations}

\begin{theorem}
	\label{theo:mainCLT} 
We keep the notations and the assumptions from \cref{theo:mainLLN}.

	\begin{enumerate}[topsep=2ex,itemsep=2ex]
		\item \emph{(Gaussian fluctuations of characters.)} Then the joint
distribution of (any finite collection of) the centred random variables
\[ n^{\frac{k}{2}} \left( \chi^{\xi^{(n)}}(k) -\E \chi^{\xi^{(n)}}(k)\right), 
                                                \qquad  k\in\{3,5,7,9,\dots\}   \]
		converges in distribution to a Gaussian distribution, where
		\[ \chi^{\xi^{(n)}}(k) = \chi^{\xi^{(n)}}\big( (k)\big)= 
		   \chi^{\xi^{(n)}}\big( (k, \underbrace{1, \dots, 1}_{\text{$n-k$ times}})\big) \] 
		denotes the evaluation of the character ration on the odd partition $(k)$ which
consists of a single part.

		\item 
\emph{(Gaussian fluctuations of shapes.)}
Then the joint distribution of (any finite collection of) the  random variables
\[
\sqrt{n} \int_0^\infty x^{2k} \left(\omega_{\frac{1}{\sqrt{n}} D(\xi^{(n)}) }(x) 
    - \Omega(x)\right)  \mathrm{d}x, \qquad k\in\{1,2,\dots\}
\]
converges in distribution to a centered Gaussian distribution, where $\Omega$ is
the function provided by \cref{theo:mainLLN}.

	\end{enumerate}
\end{theorem}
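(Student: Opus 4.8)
The plan is to reduce both parts to a single statement about the asymptotic behaviour of the polynomial functions on strict partitions living in the spin Kerov--Olshanski algebra, and then to run the standard cumulant-to-Gaussianity argument. More precisely, I would first observe that, by the discussion in Section~\ref{sec:only-ratio-matters} and the expansion \eqref{eq:linear-combination}, the random variable $\chi^{\xi^{(n)}}(k)$ (for a random irreducible component $\xi^{(n)}$ of $V^{(n)}$) has the same moments, in a suitable normalization, as are governed by the cumulants $\kumu_\ell^{(n)}$ of the reducible character ratio $\chi^{(n)}$; indeed $\E_{\PP^{(n)}} \chi^{\xi^{(n)}}(\pi) = \chi^{(n)}(\pi)$ and, more generally, the joint cumulants of the family $\big(\chi^{\xi^{(n)}}(\pi^i)\big)_i$ under $\PP^{(n)}$ coincide with $\kumu_\ell^{(n)}(\pi^1,\dots,\pi^\ell)$. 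This identity is the bridge between the probabilistic fluctuations and the algebraic decay hypothesis \eqref{eq:decay-cumualts-characters}. With this in hand, part~(1) follows by checking the hypotheses of the classical "vanishing higher cumulants" criterion for convergence to a Gaussian: one rescales $\chi^{\xi^{(n)}}(k)$ by $n^{k/2}$ and uses \eqref{eq:covariancedisjoint-afp} to see that the rescaled covariances converge to the finite limits $\covarianceDisjoint_{k+1,l+1}$, while the approximate factorization property forces every joint cumulant of order $\ell\geq 3$ of the rescaled variables to be $O(n^{-(\ell-2)/2}) \to 0$; the assumed geometric growth of $(\free_k)$ guarantees that the limiting covariance structure is that of a genuine (finite-variance) Gaussian family, so the moment method applies.

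For part~(2), the strategy is to transport the fluctuations of the characters to fluctuations of the shape via the machinery of the spin Kerov--Olshanski algebra and free cumulants recalled in Sections~\ref{sec:ko} and~\ref{sec:free-cumulants}. The key point is that the shape observables $\int_0^\infty x^{2k}\big(\omega_{\frac{1}{\sqrt n}D(\xi^{(n)})}(x)-\Omega(x)\big)\,\mathrm dx$ are, up to lower-order corrections, affine combinations of the polynomial functions $S_{2k+2}$ (equivalently, of the free cumulants $R_j$) evaluated on the doubled diagram $D(\xi^{(n)})$, and that the transition map between the character basis $\{\Ch^{\mathrm{spin}}_k\}$ and the free-cumulant basis $\{R_j\}$ is triangular with the leading term of $\Ch^{\mathrm{spin}}_k$ being $R_{k+1}$. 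Consequently, once we know from part~(1) that the top-degree parts $n^{k/2}\big(\chi^{\xi^{(n)}}(k)-\E\,\chi^{\xi^{(n)}}(k)\big)$ are jointly asymptotically Gaussian, the gradient-of-the-transition-map argument (a "delta method" at the level of graded algebras, exactly as in \cite{Sniady2006} for the linear case and formalized here through \cref{theo:key-tool}) shows that the rescaled free-cumulant fluctuations $\sqrt n\,\big(R_{2k+2}(\tfrac{1}{\sqrt n}D(\xi^{(n)}))-R_{2k+2}(\Omega)\big)$ are jointly Gaussian, and hence so are the moment fluctuations, hence so are the integrals against $x^{2k}$. The centering is automatic because the limit shape $\Omega$ is precisely the one with free cumulants \eqref{eq:our-beloved-free-cumulants}, so the deterministic part cancels.

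The main obstacle, and the step requiring genuine work, is controlling the \emph{subleading} terms in the expansion of $\Ch^{\mathrm{spin}}_k$ (and of the shape observables) in the free-cumulant basis: one must show that all the lower-degree contributions, after the appropriate rescaling, are either deterministic up to errors of order $o(1/\sqrt n)$ or are themselves negligible fluctuations, so that only the top-degree term survives in the limit. This is exactly where the approximate factorization property with its precise exponent $\tfrac{\|\pi^1\|+\cdots+\|\pi^\ell\|+2(\ell-1)}{2}$ is used in full strength — it is calibrated so that cross-terms between a top-degree and a lower-degree character observable have the right order — and where the gradation of the spin Kerov--Olshanski algebra $\KOodd$ established in \cref{theo:afp-for-spin} does the bookkeeping. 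Technically I would phrase the whole argument through \cref{theo:key-tool}, which (by its placement in the paper) packages precisely this equivalence between approximate factorization of character ratios and Gaussian concentration of the associated random diagrams; the proof of the theorem then amounts to quoting \cref{theo:key-tool} together with the identification of the relevant observables as the $S_{2k}$'s, and to the elementary verification that the hypotheses \eqref{eq:freecumulants-afp}--\eqref{eq:covariancedisjoint-afp} are exactly what that theorem requires.
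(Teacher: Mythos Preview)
Your overall strategy matches the paper's: reduce to the vanishing-of-higher-cumulants criterion (\cref{prop:afp-gauss}) via \cref{theo:key-tool}. However, there is a genuine error in your bridge step. You claim that the probabilistic joint cumulants of the family $\big(\chi^{\xi^{(n)}}(\pi^i)\big)_i$ under $\PP^{(n)}$ \emph{coincide} with $\kumu_\ell^{(n)}(\pi^1,\dots,\pi^\ell)$. This is false for $\ell\geq 2$: the cumulants $\kumu_\ell^{(n)}$ of \cref{def:afp-representations} are taken with respect to \emph{concatenation} of partitions in $\C[\OP]$ (the disjoint product), whereas the probabilistic cumulants involve the \emph{pointwise} product of the random variables $\xi\mapsto\chi^\xi(\pi^i)$ in $\Gamma$. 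In the paper's notation these are $\kumu^{\E_{\Gamma_\bullet}}$ versus $\kumu^{\E_\Gamma}$, and they already differ at $\ell=2$: the limit of the probabilistic covariance is $\covarianceProba_{k_1+1,k_2+1}$, not $\covarianceDisjoint_{k_1+1,k_2+1}$ as you write, the two being related by the explicit correction terms in \cref{theo:key-tool}.

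This is not a cosmetic point: the hypothesis \eqref{eq:decay-cumualts-characters} directly controls only the \emph{disjoint} cumulants, while Gaussianity requires decay of the \emph{probabilistic} ones. Transferring the decay from one family to the other is precisely the content of \cref{theo:afp-for-spin} (the identity map $\Gamma_\bullet\to\Gamma$ has the approximate factorization property), which is the main technical result of the paper and which you invoke only obliquely at the end. Once that transfer is in place, the remainder of your outline --- including part~(2) via the observables $S^{\spin}_{2k}$ and \cref{prop:SR-generate} --- is correct and agrees with the paper's argument. A minor remark: the geometric growth bound on $(\free_k)$ is used to pin down $\Omega$ uniquely in the LLN (Hamburger moment problem), not for finiteness of the limiting covariance in the CLT.
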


The proof is postponed to \cref{sec:proof-theomainCLT}. The explicit form of the
covariance can be calculated thanks to \cref{theo:key-tool}. This result is
analogous to the central limit theorem for random (non-shifted) Young diagrams
proved by \sniady \cite{Sniady2006} which was an extension of Kerov's Central
Limit Theorem \cite{Kerov1993,Ivanov2002} to the non-Plancherel case. Note also
that the special case of the above result for the shifted Plancherel measure was
proved by Ivanov \cite{Ivanov2004} already in 2004.

\smallskip

The following sections are a preparation for the proofs of \cref{theo:mainLLN}
and \cref{theo:mainCLT}.

\section{The approximate factorization category} 
\label{sec:afc}

The notion of \emph{approximate factorization of characters} was introduced by
\sniady as a tool for proving Gaussianity of fluctuations of
random Young diagrams related to representation theory and special functions
\cite{Sniady2006,DolegaSniady2018}. We use this occasion to present this notion
in a more abstract and more transparent framework.

\subsection{Filtered algebras}

In the usual definition of a \emph{filtered algebra} $\A=\bigcup_{i\in\Z_{\geq
		0}} \F_i$ the family $(\F_i)$ is indexed by non-negative integers. For this
reason we will refer to such a filtered algebra as \emph{$\Z_{\geq 0}$-filtered
	algebra}. The following is a slight extension of this concept. Note that each
such a $\Z_{\geq 0}$-filtered algebra becomes a $\Z$-filtered algebra by setting
$\F_i:=\{0\}$ for all negative integers $i<0$.

\begin{definition}
By a \emph{$\Z$-filtered algebra} we will understand an algebra $\A$ together
with a family (indexed by integers) of linear subspaces $(\F_i)_{i\in\Z}$  which
is increasing: $\cdots\subseteq \F_{-1}\subseteq \F_0 \subseteq \F_1 \subseteq
\cdots \subseteq \A$, such that $\A=\bigcup_i \F_i$ and such that $\F_i \cdot \F_j
\subseteq \F_{i+j}$ holds true for all $i,j\in\Z$. We will always assume that
$\A$ has a unit and $1\in \F_0$.

For $x\in\A$ its \emph{degree} $\DEGREE_\A x$ is defined as the minimal value of
$i\in\Z$ such that $x\in \F_i$. 
\end{definition}

Often we do not need to distinguish between $\Z$- and $\Z_{\geq 0}$-filtered
algebras; in this case we will speak simply about \emph{filtered algebras}.

\subsection{Examples of filtered algebras} 

The following two examples will play an important role later on.

\subsubsection{The algebra $\Sequences$ of sequences with polynomial growth}

For $i\in\Z$ we define 
\[ \F_i:= \Big\{ (f_1,f_2,\dots): \sup_n \left| f_n\right| n^{-\frac{i}{2}} < \infty     \Big\}
\]
to be the linear space of (real valued) sequences with growth at most $O\left(
n^{\frac{i}{2}} \right)$. Then $\Sequences:=\bigcup_i \F_i$ is a unital
$\Z$-filtered commutative algebra with the multiplication given by the pointwise
product. The unit $1=(1,1,\dots)\in \F_0$ corresponds to the constant sequence.

\subsubsection{The algebra of odd partitions}

We revisit \cref{sec:cumulants-of-characters} and we equip the algebra
$\C[\OP]$ of odd partitions with a $\Z$-filtration by setting
\[ \HH_i = \lin \left\{ \pi \in\OP : - \| \pi \| \leq i \right\}\]
for any $i\in\Z$.

It should be stressed that this filtration has a peculiar property that the
degree of any element is a non-positive integer.

\subsection{The approximate factorization property}

\begin{definition}
	\label{def:cumulants}
	Suppose that $\A$ and $\B$ are unital commutative algebras and 
	$F\colon\A \to\B$ is a linear unital map. For
	$a_1,\dots,a_l\in\A$ we define their \emph{cumulant} 
	\begin{multline*}
	\kumu^F_\ell(a_1,\dots,a_\ell):=
	\left. \frac{\partial^\ell}{\partial t_1 \cdots \partial t_\ell} 
	     \log F \left( e^{t_1 a_1+\cdots+t_\ell a_\ell} \right)
	\right|_{t_1=\cdots=t_l=0}=\\
	[t_1 \cdots t_\ell] \log F \left( e^{t_1 a_1+\cdots+t_\ell a_\ell} \right) \in\B
	\end{multline*}
	where the operations on the right-hand side should be understood in the sense
of formal power series in the variables $t_1,\dots,t_\ell$,
cf.~\eqref{eq:cumulant}.
\end{definition}
So defined cumulant is a coefficient in the expansion of an multidimensional
Laplace transform, with the role of the expected value $\E$ played by the linear
map $F$. For example,
\begin{equation}
\label{eq:cumulants-in-terms-of-moments}
\left\{
\begin{aligned}
\kumu^F_1(a_1    ) &= F(a_1), \\
\kumu^F_2(a_1,a_2) &= F(a_1 a_2)- F(a_1) F(a_2),\\
\vdots
\end{aligned}
\right.
\end{equation}
correspond to the mean value and the covariance.	

\begin{definition}
	\label{def:afp-for-maps}
	We say that a linear unital map $F\colon\A\to\B$ between filtered commutative
algebras has \emph{the approximate factorization property} if for all $\ell\geq 1$ and
$a_1,\dots,a_\ell\in\A$
	\begin{equation} 
	\label{eq:afp}
	\DEGREE_\B \kumu_\ell^F (a_1,\dots,a_\ell) 
	\leq  \left(\DEGREE_\A a_1\right)+\cdots+\left( \DEGREE_\A a_\ell\right) - 2(\ell-1).
	\end{equation}
\end{definition}	

\begin{remark}
More generally,
for an arbitrary $\alpha>0$ one can consider the notion of
\emph{$\alpha$-approximate factorization property} in which \eqref{eq:afp} is
replaced by the condition
\[ 	\DEGREE_\B \kumu_\ell^F (a_1,\dots,a_\ell) 
\leq  \left(\DEGREE_\A a_1\right)+\cdots+\left( \DEGREE_\A a_\ell\right) - \alpha (\ell-1).
\] 	
Clearly, \cref{def:afp-for-maps} is a special case corresponding to $\alpha=2$.

Some of the abstract general results which we present in the following remain
true also for this more general notion (in particular, all results from
\cref{sec:af-category,sec:generatotrs-af}). On the other hand, the results
which concern specific examples (such as
\cref{obs:afp-revisited,prop:afp-gauss}) depend on the specific choice of $\alpha=2$.

In order to keep the notation lightweight we will not pursue further this more
general setup.
\end{remark}

\subsection{The approximate factorization category}
\label{sec:af-category}

\begin{lemma}
	\label{lem:category}

If $\A,\B,\mathcal{C}$ are filtered unital commutative
algebras and $F\colon \A\to\B$ and $G\colon\B\to\mathcal{C}$ have the approximate
factorization property then their composition $G\circ F\colon\A\to\mathcal{C}$
also has approximate factorization property.
\end{lemma}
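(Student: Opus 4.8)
The plan is to prove the composition property by expressing the cumulants of $G\circ F$ in terms of the cumulants of $F$ and of $G$, and then checking that the degree bound propagates. The key algebraic fact is the classical formula for the cumulants of a composition of "moment-generating" maps: if one thinks of $F$ as producing, from the formal input $\exp(t_1 a_1 + \cdots + t_\ell a_\ell)$, an element of $\B$ whose $\log$-expansion has coefficients $\kappa^F_{\cdot}$, and then applies $G$, the resulting cumulants $\kappa^{G\circ F}_\ell(a_1,\dots,a_\ell)$ decompose as a sum over set partitions $\sigma = \{B_1,\dots,B_r\}$ of $\{1,\dots,\ell\}$ of terms of the shape
\[ \kappa^G_r\bigl(\kappa^F_{|B_1|}(a_{B_1}),\dots,\kappa^F_{|B_r|}(a_{B_r})\bigr), \]
where $a_{B_j}$ denotes the tuple of $a_i$ with $i\in B_j$. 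This is the multivariate "cumulants of a composition" identity; it follows formally from the definition in \cref{def:cumulants} by substituting the $\log F(\exp(\cdots))$ series into the $\log G(\exp(\cdots))$ series and collecting the coefficient of $t_1\cdots t_\ell$. I would first establish (or simply cite as a formal-power-series manipulation) this identity; this is the main structural step and the only place where real work happens.

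Granting the identity, the degree estimate is a bookkeeping exercise. Fix a set partition $\sigma=\{B_1,\dots,B_r\}$ of $\{1,\dots,\ell\}$. Since $F$ has the approximate factorization property, for each block $B_j$ we have
\[ \DEGREE_\B \kappa^F_{|B_j|}(a_{B_j}) \le \sum_{i\in B_j}\bigl(\DEGREE_\A a_i\bigr) - 2\bigl(|B_j|-1\bigr). \]
Applying the approximate factorization property of $G$ to the $r$ elements $b_j:=\kappa^F_{|B_j|}(a_{B_j})\in\B$ gives
\[ \DEGREE_{\mathcal C}\kappa^G_r(b_1,\dots,b_r) \le \sum_{j=1}^r \bigl(\DEGREE_\B b_j\bigr) - 2(r-1) \le \sum_{i=1}^\ell\bigl(\DEGREE_\A a_i\bigr) - 2\sum_{j=1}^r\bigl(|B_j|-1\bigr) - 2(r-1). \]
Now $\sum_{j=1}^r(|B_j|-1) = \ell - r$, so the right-hand side equals $\sum_i(\DEGREE_\A a_i) - 2(\ell-r) - 2(r-1) = \sum_i(\DEGREE_\A a_i) - 2(\ell-1)$, which is exactly the bound required in \eqref{eq:afp}, \emph{independently of $\sigma$}. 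Since the degree of a sum is at most the maximum of the degrees of the summands, and every summand in the composition identity is bounded by $\sum_i(\DEGREE_\A a_i) - 2(\ell-1)$, we conclude $\DEGREE_{\mathcal C}\kappa^{G\circ F}_\ell(a_1,\dots,a_\ell) \le \sum_i(\DEGREE_\A a_i) - 2(\ell-1)$, as desired.

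The one point deserving care is that the composition identity must be applied with possibly repeated or "degenerate" arguments: the block-cumulants $b_j = \kappa^F_{|B_j|}(a_{B_j})$ are honest elements of $\B$, and \cref{def:afp-for-maps} was stated for arbitrary tuples of algebra elements, so feeding them into $\kappa^G_r$ is legitimate; also $G$ (being unital and linear between unital algebras) has $\kappa^G_1(b) = G(b)$, handling the $r=1$ term. The main obstacle is really just carefully justifying the composition formula for cumulants as an identity of formal power series — once that is in hand, the degree inequality collapses to the identity $\sum_j(|B_j|-1) + (r-1) = \ell-1$, and the rest is immediate. I would therefore devote the bulk of the written proof to stating the composition identity cleanly (or reducing it to a known reference on cumulants of composed generating functions) and then dispatch the degree count in two lines.
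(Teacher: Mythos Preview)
Your proposal is correct and matches the paper's approach exactly: the paper simply cites Brillinger's formula (the law of total cumulance) to express $\kumu^{G\circ F}_\ell$ as a sum over set partitions of nested cumulants $\kumu^G_r\bigl(\kumu^F_{|B_1|}(a_{B_1}),\dots,\kumu^F_{|B_r|}(a_{B_r})\bigr)$, and your degree bookkeeping via $\sum_j(|B_j|-1)+(r-1)=\ell-1$ is precisely the computation that makes the bound \eqref{eq:afp} go through.
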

This lemma appears in a rather concealed form in \cite[Section 4.7, proof of the
implication (13)$\implies$(14)]{Sniady2006}; the main idea of the proof is to
use the formula of Brillinger \cite{Brillinger} in order to express the
cumulants for the composition $G\circ F$ in terms of the cumulants for $G$ and
cumulants for $F$.

\medskip

\cref{lem:category} allows us to speak about
\emph{the approximate factorization category} which has filtered
unital commutative algebras as objects and 
unital maps with the approximate factorization property as morphisms.

\begin{lemma}
	
\label{lem:afp-inverse}

	Suppose that $F\colon\A\to\B$ has the approximate factorization property and additionally it preserves the degree, i.e.~$\DEGREE_\B F(a) = \DEGREE_\A a$ for each $a\in\A$,
	and that
	$F$ is invertible as a linear map.
	
	Then $F^{-1}\colon \B\to\A$ also has the approximate factorization property.
\end{lemma}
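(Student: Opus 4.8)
The plan is to express the cumulants of $F^{-1}$ in terms of the cumulants of $F$ by an inductive procedure and then check that the degree bound \eqref{eq:afp} propagates. First I would fix $b_1,\dots,b_\ell\in\B$ and, using that $F$ preserves degree and is a linear bijection, set $a_i:=F^{-1}(b_i)\in\A$ with $\DEGREE_\A a_i=\DEGREE_\B b_i$. The key identity is that cumulants of $F$ and cumulants of $F^{-1}$ are related by the same combinatorial inversion that relates a formal-power-series logarithm to its compositional data; concretely, since $\log F^{-1}\bigl(e^{\sum t_i b_i}\bigr)$ and $\log F\bigl(e^{\sum t_i a_i}\bigr)$ are ``inverse'' in the appropriate sense (one is obtained from the other by substituting the moment series of $F^{-1}$ for the formal variables), Brillinger's formula --- exactly the tool invoked in \cref{lem:category} --- lets one write $\kumu_\ell^{F^{-1}}(b_1,\dots,b_\ell)$ as a sum over set partitions $\sigma=\{B_1,\dots,B_r\}$ of $\{1,\dots,\ell\}$ of products of the form $\kumu_r^{F^{-1}}\!\bigl(\dots\bigr)$ times cumulants $\kumu_{|B_j|}^{F}$ evaluated on the $a_i$'s, plus a ``diagonal'' term $\kumu_\ell^{F}(a_1,\dots,a_\ell)$ coming from the trivial partition. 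Rearranging, $\kumu_\ell^{F^{-1}}(b_1,\dots,b_\ell)$ equals $-\kumu_\ell^{F}(a_1,\dots,a_\ell)$ plus a correction built from strictly shorter cumulants of $F^{-1}$ (on fewer arguments, grouped) and cumulants of $F$.

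Granting this recursion, I would argue by induction on $\ell$. The base case $\ell=1$ is immediate: $\kumu_1^{F^{-1}}(b)=F^{-1}(b)=a$, so $\DEGREE_\A \kumu_1^{F^{-1}}(b)=\DEGREE_\B b$, with no deficit term since $2(\ell-1)=0$. For the inductive step, consider a typical term in the recursion indexed by a set partition $\sigma$ of $\{1,\dots,\ell\}$ into $r$ blocks $B_1,\dots,B_r$ with $2\le r\le\ell$ (the $r=1$ block giving the $-\kumu_\ell^F$ term, handled by the hypothesis on $F$). For each block $B_j$, the inner cumulant of $F$ on the arguments $\{a_i:i\in B_j\}$ has degree at most $\sum_{i\in B_j}(\DEGREE_\A a_i)-2(|B_j|-1)$, and the outer cumulant $\kumu_r^{F^{-1}}$ of the resulting $r$ elements --- each of degree at most that bound --- has, by the inductive hypothesis applied at length $r<\ell$, degree at most $\sum_j\bigl(\sum_{i\in B_j}\DEGREE_\A a_i-2(|B_j|-1)\bigr)-2(r-1)$. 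Summing the deficits $-2(|B_j|-1)$ over $j$ gives $-2(\ell-r)$, and adding the outer $-2(r-1)$ yields a total deficit of $-2(\ell-1)$, which is exactly the bound required by \eqref{eq:afp} since $\sum_j\sum_{i\in B_j}\DEGREE_\A a_i=\sum_{i=1}^\ell\DEGREE_\A a_i=\sum_{i=1}^\ell\DEGREE_\B b_i$. Since this bound holds termwise, it holds for the sum, completing the induction.

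The main obstacle I expect is writing down the recursion cleanly: one must be careful that the Brillinger/Möbius-inversion identity really does produce only \emph{strictly shorter} $F^{-1}$-cumulants (on a genuinely smaller index set, namely on the $r<\ell$ block-variables), so that the induction is well-founded, and one must verify that $F^{-1}$ preserves degree (which follows from $F$ preserving degree and being bijective) so that the inner $F$-cumulant arguments have the degrees claimed. Once the bookkeeping of the deficit terms $-2(|B_j|-1)$ and $-2(r-1)$ is set up, the arithmetic collapses exactly as needed; the only real content is the combinatorial identity, which is precisely the same one already used in the proof of \cref{lem:category}, so I would present it by reference to \cite{Brillinger} rather than rederiving it.
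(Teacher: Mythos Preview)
Your approach is correct and is essentially the argument the paper defers to \cite[Section 4.7]{Sniady2006}: apply the nested-cumulant (Brillinger) identity to the composition $F^{-1}\circ F=\id_\A$, whose higher cumulants vanish, isolate the singleton-partition term $\kumu_\ell^{F^{-1}}(b_1,\dots,b_\ell)$, and bound the remaining terms by induction on $\ell$ exactly as you compute. One small correction to your write-up: the one-block partition contributes $\kumu_1^{F^{-1}}\bigl(\kumu_\ell^{F}(a_1,\dots,a_\ell)\bigr)=F^{-1}\bigl(\kumu_\ell^{F}(a_1,\dots,a_\ell)\bigr)\in\A$, not $\kumu_\ell^{F}(a_1,\dots,a_\ell)\in\B$, so to bound its $\A$-degree you must also invoke that $F^{-1}$ preserves degree (which you note elsewhere but should cite here as well).
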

This lemma appears in a concealed form in 
\cite[Section 4.7, proof of the implication (14)$\implies$(13)]{Sniady2006}.

\subsection{Generators and approximate factorization}
\label{sec:generatotrs-af}

Since we consider the setup of \emph{filtered} algebras, the usual notion of
\emph{generators of an algebra} has to be adjusted accordingly.  

\begin{definition}
Let $\A$ be a filtered algebra and let $X\subseteq \A$ be its subset. We say
that $X$ generates $\A$ \emph{as a filtered algebra} if each $a\in\A$ is a
linear combination (with complex coefficients) of finite products of the form
$x_1\cdots x_\ell$ for some $\ell\geq 0$ and $x_1,\dots,x_\ell\in X$ such that 
\begin{equation}
\label{eq:generates-as-filtered-algebra}
\DEGREE x_1+\cdots+\DEGREE x_\ell\leq \DEGREE a.
\end{equation}
\end{definition}

The following simple result was proved by \sniady \cite[Corollary
19]{Sniady2006} in the specific setup of the Kerov--Olshanski algebra (with two
distinct multiplicative structures). The proof did not use any specific
properties of these filtered algebras and thus it remains valid also in this
general context. Note that the original paper mistakenly does not mention the
necessary condition \eqref{eq:generates-as-filtered-algebra}; see also
\cite[Lemma 3.4]{DolegaSniady2018} and the proceeding discussion.

\begin{lemma}
\label{lem:generators-afp-ok}
Let $F\colon\A\to\B$ be a linear unital	map between filtered commutative
algebras and let $X$ be a set which generates $\A$ as a filtered algebra.

If condition \eqref{eq:afp} holds true for all $\ell\geq 1$ and $a_1,\dots,a_\ell\in
X$ then it holds for arbitary $a_1,\dots,a_\ell\in\A$; in other words $F$ has
the approximate factorization property.
\end{lemma}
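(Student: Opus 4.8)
The plan is to reduce the claim for arbitrary elements $a_1, \dots, a_\ell \in \A$ to the hypothesis for generators by exploiting multilinearity of cumulants together with a careful bookkeeping of degrees. First I would record the basic structural properties of the cumulants $\kumu_\ell^F$ coming from \cref{def:cumulants}: each $\kumu_\ell^F$ is a symmetric $\ell$-linear map $\A^\ell \to \B$ (this is immediate from the formal-power-series definition, since $\log F(e^{t_1 a_1 + \cdots})$ depends multilinearly on $(a_1, \dots, a_\ell)$ after extracting the coefficient of $t_1 \cdots t_\ell$), and moreover $\kumu_\ell^F$ satisfies the usual moment-cumulant relations \eqref{eq:cumulants-in-terms-of-moments} expressing $F(a_1 \cdots a_\ell)$ as a sum over set partitions $\sigma$ of $\{1,\dots,\ell\}$ of products of cumulants indexed by the blocks of $\sigma$. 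Conversely, and this is the key algebraic input, each cumulant $\kumu_\ell^F(a_1, \dots, a_\ell)$ can be written (by Möbius inversion on the partition lattice) as a linear combination of products $\prod_{B \in \sigma} F\big(\prod_{i \in B} a_i\big)$ over partitions $\sigma$ of $\{1, \dots, \ell\}$.

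Next I would handle the reduction to products of generators. Since $X$ generates $\A$ as a filtered algebra, write each $a_i$ as a linear combination of products $x^{(i)}_1 \cdots x^{(i)}_{m_i}$ with $x^{(i)}_j \in X$ and $\sum_j \DEGREE x^{(i)}_j \le \DEGREE_\A a_i$. By multilinearity of $\kumu_\ell^F$ it suffices to prove \eqref{eq:afp} when each $a_i$ is itself a single product of generators, $a_i = x^{(i)}_1 \cdots x^{(i)}_{m_i}$, and — using that $\DEGREE x^{(i)}_1 + \cdots + \DEGREE x^{(i)}_{m_i} \le \DEGREE_\A a_i$ — it is enough to establish the bound
\[
\DEGREE_\B \kumu_\ell^F(a_1, \dots, a_\ell) \le \sum_{i=1}^\ell \sum_{j=1}^{m_i} \DEGREE x^{(i)}_j - 2(\ell - 1).
\]
So from now on everything is expressed in terms of the generators $x^{(i)}_j \in X$, for which \eqref{eq:afp} is assumed to hold.

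The heart of the argument is then a formula, again due to Brillinger \cite{Brillinger} and already invoked in the excerpt for \cref{lem:category}, which expresses a cumulant of products,
\[
\kumu_\ell^F\big( x^{(1)}_1 \cdots x^{(1)}_{m_1}, \ \dots, \ x^{(\ell)}_1 \cdots x^{(\ell)}_{m_\ell} \big),
\]
as a sum over those set partitions $\pi$ of the index set $\{(i,j)\}$ that are \emph{connected} with respect to the blocks $\{(i,1), \dots, (i,m_i)\}$ (i.e.\ the partition whose blocks are these rows, joined with $\pi$, is the one-block partition), of the product $\prod_{B \in \pi} \kumu_{|B|}^F\big( (x^{(i)}_j)_{(i,j) \in B} \big)$. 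For each term, I bound its degree in $\B$: the degree of the product is at most the sum of the degrees of the factors, and for each block $B$ the hypothesis gives $\DEGREE_\B \kumu_{|B|}^F \le \big(\sum_{(i,j) \in B} \DEGREE x^{(i)}_j\big) - 2(|B| - 1)$. Summing over blocks $B \in \pi$, the total of the generator-degrees reconstitutes $\sum_{i,j} \DEGREE x^{(i)}_j$, while the correction terms contribute $-2\sum_{B \in \pi}(|B| - 1) = -2\big(N - |\pi|\big)$, where $N = \sum_i m_i$ is the total number of generators and $|\pi|$ the number of blocks. The connectedness constraint forces $|\pi| \le N - \ell + 1$ (a connected partition refining the $\ell$-row partition into one block needs at least $\ell - 1$ ``merges''), hence $-2(N - |\pi|) \le -2(\ell - 1)$. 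This yields exactly the required bound for every term, and therefore for the whole sum, completing the proof. The step I expect to be the only real obstacle — though it is bookkeeping rather than genuine difficulty — is getting the combinatorics of Brillinger's connected-partition formula and the $|\pi| \le N - \ell + 1$ inequality stated cleanly; everything else is multilinearity and additivity of degree under products.
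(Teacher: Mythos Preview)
Your proposal is correct and follows the same strategy as the proof to which the paper defers (namely \cite[Corollary~19]{Sniady2006}, cf.\ also \cite[Lemma~3.4]{DolegaSniady2018}): reduce by multilinearity to products of generators, apply the Leonov--Shiryaev/Brillinger formula expressing a cumulant of products as a sum over partitions $\pi$ that are indecomposable relative to the row partition, and then use the combinatorial bound $N-|\pi|\ge \ell-1$ for such $\pi$ to recover the $-2(\ell-1)$ correction. The degree bookkeeping you outline is exactly the content of that argument.
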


\subsection{Example: approximate factorization of characters revisited}
\label{sec:AFP-partitions-revisited}

In \cref{def:afp-representations} we defined \emph{the approximate
	factorization property} for a sequence of representations while above we used
the same name in \cref{def:afp-for-maps} in the context of maps between
filtered algebras. As we explain below, this is not a coincidence.

\smallskip

With the notations of \cref{def:afp-representations}, let $\big( V^{(n)} \big)$
be a fixed sequence of representations and let $\big(\chi^{(n)}\big)$ be the
corresponding sequence of the character ratios with $\chi^{(n)}\colon \OP_{\leq
	n} \to \R$. We extend the domain of $\chi^{(n)}$ in an arbitrary way so that
$\chi^{(n)}\colon \OP \to \R$; for example we may set $\chi^{(n)}(\pi)=0$ if
$|\pi|>n$. The information about this sequence of character ratios can be
encoded by a single map
\begin{equation}
\label{eq:chi-to-sequences}
\chi \colon \C[\OP] \to \Sequences
\end{equation} which is defined by
\[ \chi(\pi):= \big( \chi^{(1)}(\pi), \chi^{(2)}(\pi),\dots \big)\qquad \text{for } \pi\in \OP.\]

\begin{observation}
	\label{obs:afp-revisited}
	The map \eqref{eq:chi-to-sequences} has the
approximate factorization property (in the sense of \cref{def:afp-for-maps}) if
and only if the sequence of representations $\big( V^{(n)}\big)$ has the
approximate factorization property (in the sense of \cref{def:afp-representations}).
\end{observation}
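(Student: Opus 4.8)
The plan is to unwind the two definitions and observe that they differ only by a re-indexing of the filtration degrees. On the one side, \cref{def:afp-representations} says that for one-part odd partitions $\pi^i = (2k_i+1)$ the character cumulants satisfy $\kumu_\ell^{V^{(n)}}(\pi^1,\dots,\pi^\ell) = O\left( n^{-\frac{\|\pi^1\|+\cdots+\|\pi^\ell\|+2(\ell-1)}{2}} \right)$. On the other side, \cref{def:afp-for-maps} applied to the map $\chi\colon\C[\OP]\to\Sequences$ of \eqref{eq:chi-to-sequences} says $\DEGREE_{\Sequences} \kumu_\ell^\chi(\pi^1,\dots,\pi^\ell) \leq (\DEGREE_{\C[\OP]}\pi^1) + \cdots + (\DEGREE_{\C[\OP]}\pi^\ell) - 2(\ell-1)$. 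The first step is to record that $\kumu_\ell^\chi(\pi^1,\dots,\pi^\ell)$ is, by construction, the sequence $\big(\kumu_\ell^{V^{(n)}}(\pi^1,\dots,\pi^\ell)\big)_n$ — cumulants are computed by the same universal polynomial formula (Brillinger / the moment–cumulant inversion) in the moments, and $\chi$ sends a product of partitions to the pointwise product of the corresponding sequences, so cumulants are taken entrywise. The second step is to recall the two relevant filtrations: on $\C[\OP]$ we have $\DEGREE_{\C[\OP]}\pi = -\|\pi\|$, and on $\Sequences$ a sequence $(f_n)$ has $\DEGREE_{\Sequences}(f_n) \leq i$ iff $f_n = O(n^{i/2})$. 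Plugging these in, the inequality in \cref{def:afp-for-maps} becomes exactly: $\kumu_\ell^{V^{(n)}}(\pi^1,\dots,\pi^\ell) = O\big( n^{\frac{-\|\pi^1\| - \cdots - \|\pi^\ell\| - 2(\ell-1)}{2}} \big)$, which is verbatim \eqref{eq:decay-cumualts-characters}. This gives the equivalence when we test only on one-part partitions.

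The remaining — and only genuinely substantive — point is the quantifier mismatch: \cref{def:afp-for-maps} demands the cumulant bound \eqref{eq:afp} for \emph{all} $a_1,\dots,a_\ell\in\C[\OP]$, whereas \cref{def:afp-representations} only imposes it on generators-by-one-part partitions $\pi^i=(2k_i+1)$. Here I would invoke \cref{lem:generators-afp-ok}: it suffices to check \eqref{eq:afp} on a set $X$ that generates $\C[\OP]$ as a \emph{filtered} algebra. The key observation is that the single-part odd partitions $\big\{ (2k+1) : k\geq 0 \big\}$ do generate $\C[\OP]$ as a filtered algebra in the sense of the relevant definition: an arbitrary odd partition $\pi = (\pi_1,\dots,\pi_r)$ is literally the product (concatenation) of its parts $(\pi_1)\cdots(\pi_r)$, and the degrees add up correctly — $\sum_j \DEGREE(\pi_j) = \sum_j (-\|(\pi_j)\|) = \sum_j (-(\pi_j - 1)) = -\|\pi\| = \DEGREE(\pi)$ — so condition \eqref{eq:generates-as-filtered-algebra} holds with equality. (Note the part $(1)$ has degree $0$ and equals the... no — $(1)$ is the partition with one part equal to $1$, a genuine generator of degree $0$; the empty partition is the unit. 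One must be slightly careful: the monoid $\OP$ has unit $\emptyset$, and $(1)\neq\emptyset$, so $(1)$ is included among the generators and this is harmless.) Thus the hypothesis "\eqref{eq:afp} holds on one-part partitions" is exactly the hypothesis of \cref{lem:generators-afp-ok}, whose conclusion is the full approximate factorization property of $\chi$.

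Assembling: if $(V^{(n)})$ has the approximate factorization property in the sense of \cref{def:afp-representations}, then \eqref{eq:afp} holds for all tuples of one-part odd partitions by the degree translation above, hence for all elements of $\C[\OP]$ by \cref{lem:generators-afp-ok}, i.e.\ $\chi$ has the approximate factorization property in the sense of \cref{def:afp-for-maps}. Conversely, if $\chi$ has the latter property, specializing \eqref{eq:afp} to one-part partitions $\pi^i=(2k_i+1)$ and translating degrees back gives \eqref{eq:decay-cumualts-characters} for all such tuples, which is precisely \cref{def:afp-representations}.

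I expect the bookkeeping with the two filtrations to be the place most likely to hide a sign error — in particular the fact that degrees in $\C[\OP]$ are non-positive (so $\DEGREE\pi = -\|\pi\|$) and the factor $\tfrac12$ relating $\DEGREE_{\Sequences}$ to the power of $n$ both need to be tracked — but this is routine, not a real obstacle. The one conceptual point worth stating explicitly in the write-up is the verification that the single-part odd partitions generate $\C[\OP]$ \emph{as a filtered algebra}, since it is this (and \cref{lem:generators-afp-ok}) that bridges the apparent gap between the two definitions; everything else is definition-chasing.
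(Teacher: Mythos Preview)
Your proof is correct and follows essentially the same approach as the paper: both use \cref{lem:generators-afp-ok} with the generating set of one-part odd partitions to pass from the restricted condition in \cref{def:afp-representations} to the full condition in \cref{def:afp-for-maps}, and both note the converse is immediate by specialization. Your write-up is in fact more explicit than the paper's --- you spell out the degree translation $\DEGREE_{\C[\OP]}\pi=-\|\pi\|$, the interpretation of $\DEGREE_{\Sequences}$, and the verification of \eqref{eq:generates-as-filtered-algebra} --- whereas the paper dispatches these as ``easy to check.''
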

\begin{proof}
We denote by $X\subset \OP$ the set of odd partitions which consist of exactly
one part. This set generates $\OP$ as a commutative monoid. We shall identify
any odd partition with the corresponding element of the odd partition algebra
$\C[\OP]$; it is easy to check that $X\subset \C[\OP]$ generates the odd
partition algebra $\C[\OP]$ as a filtered algebra.

\smallskip

Assume that $\big( V^{(n)}\big)$ has the approximate factorization property.
The condition \eqref{eq:decay-cumualts-characters} implies that the assumptions
of \cref{lem:generators-afp-ok} are fulfilled for the map
\eqref{eq:chi-to-sequences} and the aforementioned generating set $X$. It
follows that \eqref{eq:chi-to-sequences} indeed has the approximate
factorization property, as required.

\smallskip

The opposite implication is immediate.
\end{proof}

\subsection{The motivating example: Gaussian fluctuations} 

The following example will be our key tool for proving Gaussianity of various
random variables.

\medskip

For each $n\geq 1$ let $(\Omega_n,\mathfrak{F}_n,\mathbb{P}_n)$ be a
probability space. Assume that $\A$ is a $\Z$-filtered   commutative algebra
such that each element $\X\in\A$ is a sequence $\X=(X_1,X_2,\dots)$ where
$X_n\colon\Omega_n\to\R$ is a random variable on the appropriate probability space
with the property that its expected value $\E_n X_n$ is well-defined.

\medskip

We assume that the unital map $\E\colon \A\to \Sequences$ given by
the expected value:
\[ \E\colon (X_1,X_2,\dots) \mapsto (\E_1 X_1, \E_2 X_2,\dots) \]
is well-defined, i.e.~it indeed takes values in $\Sequences$.

\begin{proposition}
	\label{prop:afp-gauss}
	With the above notations, let us assume that $\E$ has the  approximate factorization
	property. Let $\{\X_1,\dots,\X_l\} \subset \A$ be a finite set; we denote
	$\X_i=(X_{i,1},X_{i,2},\dots)$ and define a centered random variable
	\[
	y_{i,n} := n^{\frac{1-\DEGREE \X_i}{2}} \cdot
	(X_{i,n} - \E_n X_{i,n}). 
	\]
	Assume that the limit of the covariance
	\[\lim_{n\to\infty} \Cov\left(y_{i_1,n},y_{i_2,n}\right)=
	\lim_{n\to\infty} \E_n \left( y_{i_1,n}\ y_{i_2,n} \right)
	\]
	exists for any $1\leq i_1,i_2\leq l$. 
	
	\medskip
	
	Then the joint distribution of the tuple 
	\begin{equation} 
	\label{eq:my-collection}
	\big( y_{1,n}, \dots, y_{l,n} \big) 
	\end{equation}
	of centered random variables converges to a Gaussian distribution in the limit
when $n\to\infty$, in the weak topology of probability measures.
\end{proposition}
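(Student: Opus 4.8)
The plan is to use the method of moments, in its cumulant formulation. Since a (possibly degenerate) multivariate Gaussian distribution is uniquely determined by its moments, it suffices to prove that every joint moment $\E_n\big(y_{i_1,n}\cdots y_{i_m,n}\big)$ converges, as $n\to\infty$, to the corresponding moment of the centered Gaussian vector whose covariance matrix is $\big(\lim_n \Cov(y_{i,n},y_{j,n})\big)_{i,j}$; the Fr\'echet--Shohat theorem then yields convergence in distribution in the weak topology. All these moments exist and grow at most polynomially in $n$, because $\X_{i_1}\cdots\X_{i_m}\in\A$ forces $\E(\X_{i_1}\cdots\X_{i_m})\in\Sequences$, and $y_{i,n}$ differs from a scalar multiple of $X_{i,n}$ only by recentering. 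Joint moments and classical joint cumulants (taken with respect to $\E_n$) are linked by universal polynomial identities, so it is equivalent --- and cleaner --- to show instead that the joint cumulants of the tuple $(y_{1,n},\dots,y_{l,n})$ converge, with limits being those of the Gaussian: zero in degrees $1$ and $\ge 3$, and the prescribed covariance in degree $2$.

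The computation proceeds in three short moves. First, classical cumulants of order $\ell\ge 2$ are invariant under adding constants and are $\ell$-homogeneous under scaling, so
\[ \kumu_\ell\big(y_{i_1,n},\dots,y_{i_\ell,n}\big)
  = n^{\frac12\sum_{j=1}^\ell(1-\DEGREE \X_{i_j})}\;
    \kumu_\ell\big(X_{i_1,n},\dots,X_{i_\ell,n}\big). \]
Second, because $\E$ acts coordinatewise, the right-hand cumulant $\kumu_\ell(X_{i_1,n},\dots,X_{i_\ell,n})$ is exactly the $n$-th coordinate of the sequence $\kumu_\ell^{\E}(\X_{i_1},\dots,\X_{i_\ell})\in\Sequences$, directly from \cref{def:cumulants}. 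Third, the approximate factorization property of $\E$ (\cref{def:afp-for-maps}) bounds
\[ \DEGREE_{\Sequences}\kumu_\ell^{\E}(\X_{i_1},\dots,\X_{i_\ell})
   \le \Big(\textstyle\sum_{j=1}^\ell \DEGREE \X_{i_j}\Big)-2(\ell-1), \]
which unwinds to $\kumu_\ell(X_{i_1,n},\dots,X_{i_\ell,n})=O\big(n^{\frac12(\sum_j\DEGREE\X_{i_j}-2(\ell-1))}\big)$. Substituting into the first identity, the exponents of $n$ telescope and
\[ \kumu_\ell\big(y_{i_1,n},\dots,y_{i_\ell,n}\big)=O\!\left(n^{\frac{2-\ell}{2}}\right). \]
For $\ell=1$ the cumulant vanishes identically, since $y_{i,n}$ is centered; for $\ell=2$ the exponent is $0$ and the covariances converge by hypothesis; for $\ell\ge 3$ the exponent is negative, so the cumulant tends to $0$. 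Thus all joint cumulants converge to those of the desired Gaussian vector, whence all joint moments converge via the cumulant-to-moment inversion, and the proposition follows.

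The only genuinely delicate point is the bookkeeping in the middle step: verifying that the classical probabilistic cumulant with respect to $\E_n$ really is the $n$-th entry of the algebraic cumulant $\kumu_\ell^{\E}$ of \cref{def:cumulants} --- this relies on $\E$ being evaluated coordinatewise and on $\A$ being an algebra so that all the mixed moments involved are defined --- together with correctly tracking the powers of $n$ introduced by the rescaling of each $y_{i,n}$. Everything else (translation and scaling behaviour of classical cumulants, the cumulant--moment formulas, and moment-determinacy of the Gaussian limit) is standard.
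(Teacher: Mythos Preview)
Your proof is correct and follows essentially the same approach as the paper's own proof: both compute the cumulants $\kumu_\ell(y_{i_1,n},\dots,y_{i_\ell,n})$, use shift-invariance and homogeneity to reduce to cumulants of the $X_{i,n}$, invoke the approximate factorization property to obtain the bound $O\big(n^{(2-\ell)/2}\big)$, and then pass from convergence of cumulants to convergence of moments to weak convergence via the moment-determinacy of the Gaussian. Your write-up is in fact slightly more detailed than the paper's in two respects --- you explicitly justify why all mixed moments exist (via $\A$ being an algebra with $\E(\A)\subseteq\Sequences$) and you spell out the identification of the probabilistic cumulant with the $n$-th coordinate of $\kumu_\ell^{\E}$ --- but these are elaborations of the same argument rather than a different route.
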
	
\begin{proof}
	We consider the cumulant of the random variables 
	$y_{i_1,n},\dots,y_{i_\ell,n}$
	\begin{equation}
	\label{eq:cumulant-concrete}
	\kumu_\ell (y_{i_1,n},\dots,y_{i_\ell,n}):=
	\left. \frac{\partial^\ell}{\partial t_1 \cdots \partial t_\ell} 
	       \log \E_n \left( e^{t_1 y_{i_1,n}+\cdots+t_\ell y_{i_\ell,n}} \right)
	\right|_{t_1=\cdots=t_\ell=0}.
	\end{equation}
	
	For $\ell=1$ this cumulant 
	\[ \kumu_1 (y_{i_1,n})= \E_n y_{i_1,n} = 0
	\]
	trivially vanishes by the centeredness.
	
	For $\ell\geq 2$ the cumulant is shift-invariant, thus
	the assumption of the approximate factorization property implies that
	\[ \kumu_\ell (y_{i_1,n},\dots,y_{i_\ell,n}) = 
	n^{\frac{\ell-\DEGREE \X_{i_1}-\cdots-\DEGREE \X_{i_\ell}}{2}} \kumu_\ell(X_{i_1,n},\dots,X_{i_\ell,n})=
	O\left( n^{\frac{2-\ell}{2}}\right).
	\]
	In particular, for $\ell\geq 3$ this cumulant clearly converges to zero.
	
	For $\ell=2$ the cumulant coincides with the covariance
	\[\kumu_2 (y_{i_1,n},y_{i_2,n})
	=\Cov\left(y_{i_1,n},y_{i_2,n}\right) \]
	and hence by it converges by assumption.
	
	\medskip
	
To summarize: we proved that each of the cumulants \eqref{eq:cumulant-concrete}
converges to a finite limit. Since each mixed moment of a collection of random
variables can be expressed as a polynomial in their cumulants (for example, via
the moment-cumulant formula \cite{Speed1983}), it follows that the tuple
\eqref{eq:my-collection} converges \emph{in moments} to some distribution. The
multidimensional Gaussian distribution can be characterized by the property
that its cumulants vanish (except for the mean value and the covariance) thus
this limit is a Gaussian distribution. The Gaussian distribution is uniquely
determined by its moments; it follows that convergence \emph{in moments}
implies \emph{weak convergence}, as required.
\end{proof}

\section{Kerov--Olshanski algebra and its spin analogue} 

\label{sec:ko}

The usual (linear) Kerov--Olshanski algebra $\KO$
\cite{Kerov1994,Hora2016} (also known under the less compact name \emph{algebra
	of polynomial functions on the set of Young diagrams};
in the monograph
\cite[Section 7]{Meliot2017} it is referred to as \emph{Ivavov--Kerov algebra}) is an important tool in
the (linear) asymptotic representation theory of the symmetric groups. One of
its advantages comes from the fact that it can be characterized in several
equivalent ways (for example as the algebra $\Lambda^*$ of \emph{shifted
	symmetric functions}); it also has several convenient linear and algebraic
bases which are related to various viewpoints and aspects of the asymptotic
representation theory. In particular, it was an important ingredient in the
proof of Gaussianity of fluctuations for random (non-shifted) Young diagrams
\cite{Sniady2006}. 

For the purposes of the current paper we will need the spin
analogue $\Gamma$ of Kerov--Olshanski algebra \cite{Ivanov2004}.

\medskip

In the current section we shall present these two algebras, as well as their
modifications related to a different multiplicative structure (\emph{``disjoint
	product''}) and the links between them. The main result of the current section
is \cref{theo:afp-for-spin}. Our strategy of proof is to use the link between
the linear and the spin setup which we explored in \cite{Matsumoto2018c}. This
section is purely algebraic: all calculations are exact, there are no asymptotic
assumptions, there is no randomness, there are no representations and no random
Young diagrams.

\subsection{The linear setup}

\subsubsection{Normalized characters of the symmetric groups}

The usual way of viewing the characters of the symmetric groups is to fix the
irreducible representation $\lambda$ and to consider the character as a function
of the conjugacy class $\pi$. However, there is also another very successful
viewpoint due to Kerov and Olshanski \cite{Kerov1994}, called \emph{dual
	approach}, which suggests to do roughly the opposite. It turns out that the most
convenient way to pursue in this direction is to define --- for a fixed integer
partition $\pi$ --- \emph{the normalized character on the conjugacy class $\pi$}
as a function on the set of \emph{all} Young diagrams $\Ch_\pi\colon\Young
\to\C$ given by
\[ \Ch_\pi(\lambda):=\begin{cases}
n^{\downarrow k} 
\ \tr \irrepSn^\lambda(\pi\cup 1^{n-k}) & 
\text{if } n\geq k, \\
0 & \text{otherwise,}
\end{cases}\]
where $n=|\lambda|$ and $k=|\pi|$. Above, $\irrepSn^\lambda(\pi\cup 1^{n-k})$
denotes the irreducible representation of the symmetric group $\Sym{n}$
evaluated on any permutation from $\Sym{n}$ with the cycle decomposition given
by the partition $\pi\cup 1^{n-k}$; furthermore $\tr$ is the normalized trace
\eqref{eq:normalized-trace}; and $n^{\downarrow k}=n (n-1) \cdots (n-k+1)$
denotes the falling power.

\subsubsection{Kerov--Olshanski algebra} 

For the purposes of the current paper, \linebreak Kerov--Olshanski algebra 
\[ \KO := \lin\{ \Ch_\class : \class \in \Part \} \]
may be defined as the linear span of the normalized linear characters of the
symmetric groups. 
We equip if with a filtration $\F_0\subseteq \F_1 \subseteq \cdots \subseteq
\KO$ given by
\begin{equation} 
\label{eq:filtration-KO}
\F_k := \lin\{ \Ch_\class : \class \in \Part,\quad  \vertiii{\class}\leq k \},
\end{equation}
where 
\[ \vertiii{\class}=|\class|+\ell(\class).\]
This specific choice of the filtration is motivated by investigation of
asymptotics of (random) Young diagrams and tableaux in the scaling in which they
grow to infinity in such a way that they remain \emph{balanced}
\cite{Biane1998,Sniady2006}.

\subsubsection{Disjoint product and the algebra $\KO_\bullet$}
\label{sec:disjoint-product}

The characters $(\Ch_\pi : \pi \in \Part)$ turn out to form a linear basis of
$\KO$ 
which allows us to define a new multiplication on
$\KO$, the \emph{disjoint product}, by setting
\[ \Ch_{\pi^1} \bullet \Ch_{\pi^2}:= \Ch_{\pi^1 \pi^2},\]
where the product of two partitions $\pi^1\pi^2$ on the right-hand side should
be understood --- just like in \cref{sec:cumulants-of-characters} --- as their
concatenation.

The vector space $\KO$ equipped with the disjoint product $\bullet$ becomes a
unital, commutative, $\Z_{\geq 0}$-filtered (with respect to the usual
filtration \eqref{eq:filtration-KO}) algebra which will be denoted by
$\KO_\bullet$.

\medskip

To summarize: the vector space $\KO$ can be equipped with two distinct
multliplicative structures which correspond to the pointwise product and to the
disjoint product. Comparison of these two multiplicative structures turns out to
be a fruitful idea in asymptotic representation theory \cite{Sniady2006}. We
recall the key result of this flavour below in \cref{theo:afp-for-linear}.

\subsubsection{Approximate factorization property for linear characters}

The following result turned out to be essential for proving Gaussianity of
fluctuations for random Young diagrams in \cite{Sniady2006}.
Our goal in this section is to prove its spin analogue (\cref{theo:afp-for-spin}).

\begin{proposition}
	\label{theo:afp-for-linear}
	The identity map
	\begin{equation} 
	\label{eq:identityKO}
		\id_\KO \colon \KO_\bullet \longrightarrow \KO 
	\end{equation}
	has the approximate factorization property.
	
	Furthermore, the second cumulant of this map is given by
	\begin{multline} 
	\label{eq:cumulant-disjoint}
	\kumu^{\id_\KO}_2(\Ch_{k_1},\Ch_{k_2} )= \Ch_{k_1,k_2} - \Ch_{k_1} \Ch_{k_2}  = \\
	   (-1) \sum_{r\geq 1}
	   \sum_{\substack{a_1,\dots,a_r\geq 1 \\ a_1+\cdots+a_r=k_1}}
	   \sum_{\substack{b_1,\dots,b_r\geq 1 \\ b_1+\cdots+b_r=k_2}}
	   \frac{k_1 k_2}{r}\ \Ch_{(a_1+b_1-1), \dots, (a_r+b_r-1)}  +\\
	   + \text{terms of degree at most ($k_1+k_2-2$)}.
	 \end{multline}
\end{proposition}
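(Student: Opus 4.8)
The plan is to deduce the statement from the standard combinatorial description of the structure constants expressing a pointwise product $\Ch_{k_1}\Ch_{k_2}$ in the basis $(\Ch_\pi)$, together with an elementary degree count. First I would recall (from the work of \'Sniady \cite{Sniady2006} and Dołęga--Féray, or from Ivanov--Kerov) the explicit expansion of $\Ch_{k_1}\cdot\Ch_{k_2}$ as a linear combination of $\Ch_\pi$; the leading term is $\Ch_{k_1,k_2}$, which has degree $\vertiii{(k_1,k_2)}=(k_1+k_2)+2$, and the disjoint product $\Ch_{k_1}\bullet\Ch_{k_2}=\Ch_{k_1,k_2}$ has exactly the same degree, so the identity map preserves the degree on these generators. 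Since $X:=\{\Ch_k : k\geq 1\}$ generates $\KO$ as a filtered algebra (indeed $\KO_\bullet$ is the free polynomial algebra on these generators, and each $\Ch_k$ has degree $k+1$ with $(k_1+1)+(k_2+1)=(k_1+k_2+2)=\DEGREE(\Ch_k\bullet\Ch_k)$, so products of generators realize every filtration level), by \cref{lem:generators-afp-ok} it suffices to verify the cumulant bound \eqref{eq:afp} for $a_i\in X$.

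The heart of the argument is the bound on $\kumu_\ell^{\id_\KO}(\Ch_{k_1},\dots,\Ch_{k_\ell})$. The key identity, which I would state first, is that for the identity map between a commutative algebra with the disjoint product and the same space with the pointwise product, the cumulants satisfy
\[
\kumu_\ell^{\id_\KO}(\Ch_{k_1},\dots,\Ch_{k_\ell}) \;=\; \sum_{\text{set partitions } \sigma \text{ of } [\ell]}(\text{Möbius coeff.})\,\prod_{B\in\sigma}\Ch_{(k_i)_{i\in B}},
\]
and more usefully that $\kumu_\ell^{\id_\KO}$ measures the failure of $\Ch_{k_1}\cdots\Ch_{k_\ell}$ (pointwise) to equal $\Ch_{k_1,\dots,k_\ell}$. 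Then I would invoke the combinatorial formula for the connected (cumulant) part of a product of normalized characters: $\kumu_\ell^{\id_\KO}(\Ch_{k_1},\dots,\Ch_{k_\ell})$ is a sum over ways of gluing the $\ell$ cycles into a single connected configuration, and each such term $\Ch_\mu$ that appears satisfies $\vertiii{\mu}\leq k_1+\cdots+k_\ell - 2(\ell-1)$, with equality only for the ``tree-like'' gluings (which for $\ell=2$ are precisely the terms displayed in \eqref{eq:cumulant-disjoint}). The degree drop of $2$ per extra factor comes from the fact that merging two cycles of lengths $a$ and $b$ into one of length $a+b-1$ changes $\vertiii{\cdot}=|\cdot|+\ell(\cdot)$ by $(a+b-1)-(a+b) + (1-2) = -2$. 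This is exactly inequality \eqref{eq:afp} with $\DEGREE\Ch_{k_i}=k_i+1$, since $\sum(k_i+1) - 2(\ell-1) = \sum k_i + \ell - 2\ell + 2 = \sum k_i - \ell + 2$, matching $\vertiii{\mu}$ for the single-part merge $\mu$.

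For the second cumulant formula \eqref{eq:cumulant-disjoint} specifically, I would carry out the computation directly: expand $\Ch_{k_1}\cdot\Ch_{k_2}$ using the known product formula (e.g.\ as in \cite[Section 4]{Sniady2006} or the Dołęga--Féray--\'Sniady computations), isolate the terms of top degree $k_1+k_2$, and identify them as the claimed sum over compositions $(a_1,\dots,a_r)$ of $k_1$ and $(b_1,\dots,b_r)$ of $k_2$ with the combinatorial weight $\frac{k_1 k_2}{r}$ (this weight counts the number of ways to interleave/cyclically align the two cycles to produce the merged partition, divided by the symmetry). The sign $(-1)$ is forced because $\kumu_2 = \Ch_{k_1,k_2}-\Ch_{k_1}\Ch_{k_2}$ and the leading behaviour of the pointwise product $\Ch_{k_1}\Ch_{k_2}$ exceeds $\Ch_{k_1,k_2}$ precisely by these positive-coefficient lower-length terms.

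\textbf{Main obstacle.} The genuine difficulty is pinning down the explicit top-degree coefficient $\frac{k_1 k_2}{r}$ in \eqref{eq:cumulant-disjoint} and verifying the degree-drop bound \eqref{eq:afp} for all $\ell$ with the correct constant $2(\ell-1)$ (not merely $O(1)$ per factor). This requires the precise combinatorics of the structure constants of $\KO$ in the $\vertiii{\cdot}$-filtration — essentially the statement that gluing $\ell$ cycles into a connected whole drops the filtration degree by at least $2(\ell-1)$, with the extremal gluings being the ones enumerated on the right-hand side. I would lean on the already-established combinatorial machinery for $\KO$ (the genus/Euler-characteristic bookkeeping of \cite{Sniady2006,DolegaFeraySniady2008}) rather than re-deriving it; once that input is in hand, the rest is the formal reduction to generators via \cref{lem:generators-afp-ok} and a routine degree count.
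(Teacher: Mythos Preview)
Your proposal is essentially correct and follows the same approach as the original proof in \cite[Theorem~15]{Sniady2006}; note, however, that the paper itself does not re-prove this proposition but simply cites it from the literature (together with the alternative approaches via the Stanley character formula and via Jack characters). Your degree bookkeeping---merging two cycles of lengths $a,b$ into one of length $a+b-1$ drops $\vertiii{\cdot}$ by exactly $2$, and the top-degree terms in $\Ch_{k_1}\Ch_{k_2}-\Ch_{k_1,k_2}$ are the $r$-fold merges with $\vertiii{\cdot}=k_1+k_2$---is the right skeleton, and your reduction to generators via \cref{lem:generators-afp-ok} is exactly how the argument is organized.
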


The first proof of this result was found by \sniady
\cite[Theorem 15]{Sniady2006}. For a sketch of an alternative proof based on
Stanley character formula and some more historical context we refer to
\cite[Section 1.13]{SniadyJackUnpublished}. 
An extension of this result to the context of Jack symmetric functions and Jack
characters was proved by a yet another method in \cite[Theorem 2.3]{Sniady2019}.

\subsection{The spin setup}

\subsubsection{Normalized spin characters}

Following Ivanov \cite{Ivanov2004,Ivanov2006}
(see also \cite{Matsumoto2018c}),
for a fixed odd partition
$\class\in\OP$ the corresponding \emph{normalized spin character} is a function
on the set of \emph{all} strict partitions given by
\begin{equation}
\label{eq:projective-normalized-trololo}
\ChSpin_\class(\xi):=\begin{cases}
n^{\downarrow k}\ 2^{\frac{\|\pi\|}{2}}
\ \frac{ \PHIeasy{\xi}{\class\cup 1^{n-k}}}{ \PHIeasy{\xi}{1^{n}}}
& \text{if } n\geq k, \\
0 & \text{otherwise,}
\end{cases}
\end{equation}
where $n=|\xi|$ and $k=|\class|$. 

\subsubsection{Spin Kerov--Olshanski algebra}

We define the \emph{spin} Kerov--Olshanski algebra 
(maybe \emph{Ivanov algebra} would be an even better name)
\begin{equation}
\label{eq:Gamma}
\Gamma := \lin\{ \ChSpin_\class : \class \in \SP \} 
\end{equation}
as the linear span of spin characters \cite[Section 6]{Ivanov2004}.
Ivanov proved  
that the elements of $\Gamma$ can be identified with 
\emph{supersymmetric polynomials}, thus $\Gamma$ is a unital, commutative algebra. 
Following \cite[Section 1.6]{Matsumoto2018c}, we equip $\Gamma$ with a filtration 
$\G_0\subseteq \G_1 \subseteq \cdots \subseteq \Gamma$ defined by
\begin{equation} 
\label{eq:filtration-spin}
\G_k := \lin\{ \ChSpin_\class : \class \in \OP,\quad  \vertiii{\class}\leq k \}.
\end{equation}

\subsubsection{Disjoint product and the algebra $\Gamma_\bullet$}

Similarly as in \cref{sec:disjoint-product} we define the disjoint product
of spin characters
\[ \ChSpin_{\pi^1} \bullet \ChSpin_{\pi^2}:= \ChSpin_{\pi^1 \pi^2}\]
for arbitrary $\pi_1,\pi_2\in\SP$. We denote by $\Gamma_{\bullet}$
the filtered algebra which, as a vector space, coincides with $\Gamma$ and 
is equipped with a multiplication given by the disjoint product $\bullet$.

\subsubsection{Approximate factorization property for spin characters}

\begin{theorem}
	\label{theo:afp-for-spin}
	The identity map
	\[ \id_\Gamma \colon \Gamma_\bullet \longrightarrow \Gamma \]
	has the approximate factorization property.

	Furthermore, the second cumulant of this map is given --- for any odd integers
$k_1,k_2\geq 1$ --- by
		\begin{multline}
		\label{eq:second-cumulant} 
		\kumu^{\id_\Gamma}_2(\ChSpin_{k_1},\ChSpin_{k_2} )=
		 \ChSpin_{k_1,k_2}- \ChSpin_{k_1} \ChSpin_{k_2}  = \\
	(-1) \sum_{r\geq 1} \frac{2^{r-1}}{r}  k_1 k_2 
	\sum_{
       (a_i), (b_i)
        }
	\ChSpin_{(a_1+b_1-1), \dots, (a_r+b_r-1)}  +\\
	+ \text{terms of degree at most $(k_1+k_2-2)$},
	\end{multline}
	where the second sum runs over integers $a_1,\dots,a_r,b_1,\dots,b_r\geq 1$
such that $a_1+\cdots+a_r=k_1$, and $b_1+\cdots+b_r=k_2$ and for each $i\in[r]$
the sum $a_i+b_i$ is even.
\end{theorem}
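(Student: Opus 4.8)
The plan is to transport the statement across the bridge to the linear setup established in \cite{Matsumoto2018c}, where the analogous result \cref{theo:afp-for-linear} is already available. More precisely, recall from \cite{Matsumoto2018c} that there is an explicit algebra isomorphism (or at least a sufficiently well-behaved linear map, compatible with both multiplicative structures) relating the spin Kerov--Olshanski algebra $\Gamma$ to a suitable subalgebra of the linear Kerov--Olshanski algebra $\KO$; under this map the normalized spin character $\ChSpin_\pi$ (for $\pi\in\OP$) corresponds, up to an explicit power of $2$ absorbed in the normalization \eqref{eq:projective-normalized-trololo}, to the linear normalized character $\Ch_\pi$ indexed by the \emph{same} odd partition. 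First I would record precisely which statement from \cite{Matsumoto2018c} I am invoking: that this correspondence intertwines the pointwise products on $\Gamma$ and $\KO$, intertwines the disjoint products on $\Gamma_\bullet$ and $\KO_\bullet$, and respects the filtrations \eqref{eq:filtration-spin} and \eqref{eq:filtration-KO} up to a controlled shift (the $\vertiii{\cdot}$ gradings match because both are built from the same combinatorial data $|\pi|+\ell(\pi)$ on odd partitions). Granting this, the diagram
\[
\begin{CD}
\Gamma_\bullet @>{\id_\Gamma}>> \Gamma \\
@V{\Phi}VV @VV{\Phi}V \\
\KO_\bullet @>{\id_\KO}>> \KO
\end{CD}
\]
commutes, where $\Phi$ denotes the comparison map.

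**Assembling the proof from the category lemmas.** The point of \cref{sec:afc} is precisely that approximate factorization is stable under composition (\cref{lem:category}) and under inversion when the degree is preserved (\cref{lem:afp-inverse}). Since $\Phi$ is a degree-preserving algebra isomorphism, it has the approximate factorization property in both directions (its cumulants of order $\geq 2$ vanish identically because it is multiplicative, so \eqref{eq:afp} holds trivially). Therefore $\id_\Gamma = \Phi^{-1}\circ \id_\KO \circ \Phi$ is a composition of three morphisms in the approximate factorization category, hence itself a morphism by \cref{lem:category}; this is exactly the first assertion of \cref{theo:afp-for-spin}. In carrying this out I would be careful about one subtlety: $\Phi$ need not be defined on all of $\KO$ but only on the image of $\Gamma$ (supersymmetric polynomials form a proper subalgebra), so strictly speaking one works with the corestriction of $\id_\KO$ to that subalgebra, which is harmless since both the disjoint and pointwise products preserve it.

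**The explicit second cumulant.** For the formula \eqref{eq:second-cumulant} the plan is to compute directly: by naturality of cumulants under the algebra isomorphism $\Phi$ we have $\Phi\big(\kumu_2^{\id_\Gamma}(\ChSpin_{k_1},\ChSpin_{k_2})\big) = \kumu_2^{\id_\KO}(\Phi\ChSpin_{k_1},\Phi\ChSpin_{k_2})$, and then I would substitute the known linear formula \eqref{eq:cumulant-disjoint}. The right-hand side of \eqref{eq:cumulant-disjoint} is a sum over compositions $(a_i),(b_i)$ of $k_1,k_2$ into $r$ parts of the term $-\frac{k_1k_2}{r}\Ch_{(a_1+b_1-1),\dots,(a_r+b_r-1)}$; pulling this back through $\Phi^{-1}$ turns each $\Ch$ into the corresponding $\ChSpin$, but the normalization \eqref{eq:projective-normalized-trololo} carries a factor $2^{\|\pi\|/2}$, so the ratio of normalizations between $\Ch_{(a_1+b_1-1),\dots}$ (of which there are $r$ parts) and the pair $\Ch_{k_1},\Ch_{k_2}$ produces exactly the extra factor $2^{r-1}$ appearing in \eqref{eq:second-cumulant}, and at the same time the parity constraint "$a_i+b_i$ is even" is forced because $a_i+b_i-1$ must be odd for the resulting partition to be an odd partition (and $\ChSpin$ is indexed by odd partitions). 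Compositions violating this parity contribute zero on the spin side. The lower-order error terms "of degree at most $k_1+k_2-2$" match up because $\Phi$ shifts degrees by a fixed amount depending only on $\vertiii{\cdot}$, which is the same for all the relevant partitions of a given total size.

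**Main obstacle.** The genuine difficulty is not the category-theoretic packaging — that is automatic once the comparison map is in place — but rather extracting from \cite{Matsumoto2018c} a statement precise enough to support all three requirements simultaneously: that $\Phi$ is multiplicative for \emph{both} the pointwise and the disjoint products, that it is a \emph{linear isomorphism} (so that \cref{lem:afp-inverse} applies), and that it is \emph{degree-preserving} for the filtrations \eqref{eq:filtration-spin} and \eqref{eq:filtration-KO}. In practice I expect one cannot use a single off-the-shelf isomorphism and will instead need a short lemma, proved by comparing the two character tables via the factorization of spin characters into linear ones (the content of \cite{Matsumoto2018c}), establishing these three properties; verifying the degree/filtration compatibility, including the bookkeeping of the $2^{\|\pi\|/2}$ normalization against the $\vertiii{\cdot}$ grading, is where most of the actual work lies. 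Once that lemma is secured, the rest of the proof is the three-line composition argument together with the naturality computation sketched above.
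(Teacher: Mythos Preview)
Your proposed comparison map $\Phi$ cannot exist with the properties you require, and this is not merely a matter of extracting the right statement from \cite{Matsumoto2018c}. Concretely, you want $\Phi$ to carry $\ChSpin_\pi$ to a scalar multiple of $\Ch_\pi$ for $\pi\in\OP$, to be a homomorphism for the pointwise product, and to be a homomorphism for the disjoint product. But the linear span of $\{\Ch_\pi:\pi\in\OP\}$ inside $\KO$ is \emph{not} closed under the pointwise product: already \eqref{eq:cumulant-disjoint} shows that $\Ch_{k_1}\Ch_{k_2}$ involves $\Ch_{(a_1+b_1-1),\dots,(a_r+b_r-1)}$ with parts $a_i+b_i-1$ that are even whenever $a_i+b_i$ is odd. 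So there is no subalgebra of $\KO$ (for the pointwise product) onto which $\Phi$ could land. One can also see the obstruction directly from the two second-cumulant formulas: if a filtered isomorphism intertwined both products and sent $\ChSpin_{k_i}$ to scalars times $\Ch_{k_i}$, the top-degree parts of the two cumulants would be related by a uniform scalar, whereas in fact the spin side carries an extra factor $2^{r-1}$ depending on $r$.

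The paper's route is genuinely different. The comparison map from \cite{Matsumoto2018c} is $\double\colon\KO\to\Gamma$ (precomposition with the doubling $D$), which is a \emph{surjection} and a homomorphism for the pointwise product but \emph{not} for the disjoint product. To handle the disjoint side one introduces the Leibniz-type map $\Psi\colon\KOodd\to\Gamma_\bullet\otimes\Gamma_\bullet$, $\Ch_k\mapsto \ChSpin_k\otimes 1+1\otimes\ChSpin_k$, and shows the square with $\double$ and the multiplication $m\colon\Gamma_\bullet\otimes\Gamma_\bullet\to\Gamma$ commutes (this is exactly \cref{thm:general_characters}). Because $\Gamma_\bullet\otimes 1$ and $1\otimes\Gamma_\bullet$ are classically independent under $m$, all mixed cumulants vanish and one obtains
\[
\kumu_\ell^{\id_\Gamma}\big(\ChSpin_{k_1},\dots,\ChSpin_{k_\ell}\big)=\tfrac12\,\double\Big(\kumu_\ell^{\id_\KO}(\Ch_{k_1},\dots,\Ch_{k_\ell})\Big),
\]
after which \cref{theo:afp-for-linear} and \cref{prop:double-preserves-degree} give the degree bound. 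For the explicit second cumulant one applies $\double$ to \eqref{eq:cumulant-disjoint}: terms with $\rho\notin\OP$ are pushed into lower degree by \cref{prop:not-odd-partitions}, while for $\rho\in\OP$ \cref{thm:general_characters} together with the approximate factorization already established gives $\double\Ch_\rho=2^{\ell(\rho)}\ChSpin_\rho+(\text{lower degree})$. The factor $2^{r-1}$ thus arises as $\tfrac12\cdot 2^{\ell(\rho)}=\tfrac12\cdot 2^r$, not from the normalization $2^{\|\pi\|/2}$ in \eqref{eq:projective-normalized-trololo} as you suggest; and the parity constraint $a_i+b_i$ even is what distinguishes the $\rho\in\OP$ terms from those absorbed into the error.
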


The proof is postponed to \cref{sec:proof-of-afp-for-spin}.
Our strategy is to
explore the link between the linear and the spin setup.

\subsection{Double of a function. Kerov--Olshanski algebra: linear vs spin} 

\subsubsection{Double of a strict partition} 

\label{sec:double}

We denote by $\Part_n$ the set of partitions of a given integer $n\geq 0$. The
theory of partitions and Young diagrams is more developed than its shifted
counterpart. For this reason it is convenient to encode a given strict partition
$\xi\in\SP_n$ by its \emph{double} $D(\xi)\in \Part_{2n}$. Graphically, $D(\xi)$
corresponds to a Young diagram obtained by arranging the shifted Young diagram
$\xi$ and its `transpose' so that they nicely fit along the `diagonal',
cf.~\cref{fig:double}, see also \cite[page 9]{Macdonald1995}.

\subsubsection{Double of a function}

If $F\colon \Part\to \C$ is a function on the set of partitions, we define its
\emph{double} as the function $\double F \colon \SP \to \C$ on the set of
\emph{strict} partitions
\[ \left( \double F \right)(\xi) := F\big( D(\xi) \big) \qquad \text{for }\xi\in\SP\]
given by doubling of the argument.

\begin{proposition}[{\cite[Theorem 1.8, Theorem 1.9]{Matsumoto2018c}}]
	\label{prop:double-preserves-degree}
	The map $\double$ is an algebra homomorphism
which maps Kerov--Olshanski algebra to its spin counterpart:
	\begin{equation}
		\label{eq:double-maps}
		\double (\KO) = \Gamma 
	\end{equation}	
	and, additionally, preserves the filtration, i.e.
	\begin{equation}
		\label{eq:double-filtration}
		\G_k = \double(\F_k) \qquad \text{for any integer $k\geq 0$.}
	\end{equation}
\end{proposition}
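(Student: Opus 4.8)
The proposition is due to \cite[Theorems~1.8 and 1.9]{Matsumoto2018c}; the plan is to reduce everything to algebra generators. First, the homomorphism property for the \emph{pointwise} product is free: for $F,G\colon\Part\to\C$ one has $\big(\double(FG)\big)(\xi)=(FG)\big(D(\xi)\big)=F\big(D(\xi)\big)\,G\big(D(\xi)\big)=\big(\double F\cdot\double G\big)(\xi)$. Thus the only real content is that precomposition with the doubling map $\xi\mapsto D(\xi)$ sends $\KO$ \emph{onto} $\Gamma$ and respects the filtrations, i.e.\ \eqref{eq:double-maps} and \eqref{eq:double-filtration}.

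Next, recall that $\KO$ is generated as a filtered algebra by the one-cycle normalized characters $\Ch_k$, $k\geq 1$ (because $\Ch_{k_1}\cdots\Ch_{k_r}=\Ch_{(k_1,\dots,k_r)}$ plus terms of strictly smaller $\vertiii{\cdot}$-degree, so every $\Ch_\class$ is reached by induction on the degree), with $\DEGREE\Ch_k=\vertiii{(k)}=k+1$; likewise $\Gamma$ is generated as a filtered algebra by the \emph{odd} one-cycle spin characters $\ChSpin_k$ ($k$ odd), with $\DEGREE\ChSpin_k=k+1$. It therefore suffices to compute $\double(\Ch_k)$ for each $k$, check that it lies in $\Gamma$ with $\DEGREE_\Gamma\double(\Ch_k)\leq k+1$, and check that the top-degree parts of the $\double(\Ch_k)$ for $k$ odd recover, up to a nonzero scalar and lower-order terms, a generator of $\Gamma$ in each odd degree; the first two facts give $\double(\F_k)\subseteq\G_k$, and the last gives surjectivity and, reading the same induction on the associated graded, the reverse inclusion $\G_k\subseteq\double(\F_k)$.

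The computation itself rests on the Frobenius description of the double: $\xi=(\xi_1>\dots>\xi_\ell)$ has $D(\xi)$ with Frobenius coordinates $(\xi_1,\dots,\xi_\ell\mid\xi_1-1,\dots,\xi_\ell-1)$, cf.\ \cref{fig:double} and \cite[p.~9]{Macdonald1995}. Expressing a shifted symmetric function $f\in\KO\cong\Lambda^*$ through the content data $\{a_i+\tfrac12\}\sqcup\{-(b_i+\tfrac12)\}$ of its argument, evaluation at $D(\xi)$ amounts to substituting the multiset $\{\xi_i+\tfrac12\}\sqcup\{\tfrac12-\xi_i\}$ — precisely the ``$\pm$-alphabet'' on which a shifted symmetric function becomes a supersymmetric polynomial in $\xi_1,\dots,\xi_\ell$, which is Ivanov's description of $\Gamma$. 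Concretely, $\double$ sends the leading degree-$(k+1)$ part of $\Ch_k$ to a fixed linear combination of the sums $\sum_i\big[(\xi_i+\tfrac12)^{m}-(-1)^{m}(\xi_i-\tfrac12)^{m}\big]$; expanding the binomials, for even $m$ this equals $m\sum_i\xi_i^{m-1}$ plus lower-order terms and for odd $m$ it equals $2\sum_i\xi_i^{m}$ plus lower-order terms, so in both parities one lands on the odd power sums of $\xi$, which generate $\Gamma$, and with the degree unchanged. The main obstacle will be making this middle step airtight: matching the slightly different filtration conventions on the two algebras and, above all, verifying that the image of $\double$ really satisfies the exact cancellation property characterizing supersymmetric polynomials, so that the target is exactly $\Gamma$ and nothing larger — this is the technical core of \cite{Matsumoto2018c}, after which the generator bookkeeping above is routine.
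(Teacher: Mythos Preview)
The paper does not give its own proof of this proposition; it is quoted verbatim from \cite[Theorems~1.8 and~1.9]{Matsumoto2018c} and used as a black box. Your sketch is a reasonable outline of how that external argument goes: the homomorphism property is indeed trivial for pointwise products, the Frobenius coordinates of $D(\xi)$ are $(\xi_1,\dots,\xi_\ell\mid\xi_1-1,\dots,\xi_\ell-1)$, and the reduction to algebra generators together with the observation that evaluation at $D(\xi)$ turns shifted-symmetric functions into supersymmetric functions of $\xi_1,\dots,\xi_\ell$ is exactly the mechanism behind the cited result. You also correctly flag the technical core (the supersymmetry cancellation and the precise filtration match) as the part that needs the work done in \cite{Matsumoto2018c}.

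One genuine caution about your internal logic. You argue that $\Gamma$ is generated as a filtered algebra by the $\ChSpin_k$ for odd $k$ ``likewise'', i.e.\ via $\ChSpin_{k_1}\cdots\ChSpin_{k_r}=\ChSpin_{(k_1,\dots,k_r)}+{}$lower-degree terms. In the logical order of \emph{this} paper, that statement is essentially \cref{theo:afp-for-spin}, whose proof (in \cref{sec:proof-of-afp-for-spin}) \emph{uses} \cref{prop:double-preserves-degree}; so invoking it here would be circular. The repair is easy and does not change your strategy: take the generation of $\Gamma$ (and its filtration) directly from Ivanov's identification of $\Gamma$ with the ring of supersymmetric polynomials, which the paper cites immediately after \eqref{eq:Gamma}, rather than from any approximate-factorization statement. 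With that adjustment your outline is consistent with the approach of the cited reference.
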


In the following we well need to compute the images $\double \Ch_{\rho}$ of the
linear basis of $\KO$. The following two results provide the necessary
information.

\begin{proposition}[{\cite[Theorem 3.1]{Matsumoto2018c}}]
	\label{thm:general_characters} 
	In the case when $\rho\in\OP$ is an odd partition,
	\begin{equation}
	\label{eq:spin-vs-linear}
	\double \Ch_\rho = 
	\sum_{I \subseteq \{1,2,\dots,\ell(\rho)\}} 
	\ChSpin_{\rho(I)}\  \ChSpin_{\rho(I^c)} \in \Gamma
	\end{equation}
	where $\rho(I)=(\rho_{i_1}, \rho_{i_2},\dots, \rho_{i_r})$ for $I=\{i_1<i_2<
\cdots< i_r\}$ and $I^c=\{1,\dots,\ell(\rho)\}\setminus I$ denotes the
complement of $I$.
\end{proposition}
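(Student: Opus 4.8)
The plan is to establish \eqref{eq:spin-vs-linear} by evaluating both of its sides on an arbitrary strict partition $\xi$ and reducing the claim to a classical factorization property of the ordinary characters of $\Sym{2m}$ attached to a doubled diagram. By \cref{prop:double-preserves-degree} the left-hand side $\double\Ch_\rho$ already lies in $\Gamma$, and the right-hand side is a sum of products of elements $\ChSpin_\sigma\in\Gamma$; hence both sides are well-defined functions on $\SP$, and it suffices to check that these two functions agree, which by the polynomiality of $\Gamma$ it is enough to do for $\xi\in\SP_m$ with $m$ large (the remaining cases then follow, and for large $m$ the padding in \eqref{eq:projective-normalized-trololo} is harmless). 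One cannot shortcut this by reducing to single-part $\rho$ via multiplicativity of $\double$: the map $\double$ is an algebra homomorphism only for the \emph{pointwise} product, whereas $\Ch_\rho$ for a multi-part $\rho$ is assembled from the $\Ch_k$ through the \emph{disjoint} product, so such a reduction would reintroduce $\double\Ch_k$ for even $k$, which does not have the shape \eqref{eq:spin-vs-linear}.

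First I would unfold the definitions. Writing $k=|\rho|$ and using $|D(\xi)|=2m$,
\[
\bigl(\double\Ch_\rho\bigr)(\xi)=\Ch_\rho\bigl(D(\xi)\bigr)=(2m)^{\downarrow k}\,\frac{\chi^{D(\xi)}\bigl(\rho\cup 1^{\,2m-k}\bigr)}{\chi^{D(\xi)}\bigl(1^{2m}\bigr)},
\]
while by \eqref{eq:projective-normalized-trololo} every term on the right-hand side of \eqref{eq:spin-vs-linear} is
\[
\ChSpin_{\rho(I)}(\xi)\,\ChSpin_{\rho(I^c)}(\xi)=m^{\downarrow k_I}\,m^{\downarrow k_{I^c}}\;2^{\frac{\|\rho(I)\|+\|\rho(I^c)\|}{2}}\;\frac{\PHIeasy{\xi}{\rho(I)\cup 1^{\,m-k_I}}\ \PHIeasy{\xi}{\rho(I^c)\cup 1^{\,m-k_{I^c}}}}{\PHIeasy{\xi}{1^m}^{2}},
\]
with $k_I=|\rho(I)|$ and $k_{I^c}=|\rho(I^c)|$. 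The elementary but essential bookkeeping point is that the splitting of the index set $\{1,\dots,\ell(\rho)\}$ into $I$ and $I^c$ splits the parts of $\rho$, so both $|\cdot|$ and $\ell(\cdot)$ are additive; hence $k_I+k_{I^c}=k$ and $\|\rho(I)\|+\|\rho(I^c)\|=\|\rho\|$, so the power of $2$ collapses to the single factor $2^{\|\rho\|/2}$ independent of $I$. Thus \eqref{eq:spin-vs-linear} becomes a purely character-theoretic identity expressing $\chi^{D(\xi)}\bigl(\rho\cup 1^{2m-k}\bigr)$ as a sum, over ordered splittings of the parts of the odd partition $\rho$ into two blocks $\alpha,\beta$, of the products $\PHIeasy{\xi}{\alpha}\,\PHIeasy{\xi}{\beta}$ of spin character values (with $\alpha,\beta$ understood padded with $1$'s), weighted by explicit ratios of falling factorials, a power of $2$, and of $\chi^{D(\xi)}(1^{2m})$ against $\PHIeasy{\xi}{1^m}^{2}$.

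Next I would supply this character identity, which is classical. The ordinary characters of $\Sym{2m}$ on conjugacy classes all of whose cycles are odd are governed by Schur's theory of projective representations, and the character of the doubled diagram $D(\xi)=(\xi\mid\xi-\mathbf 1)$ (in Frobenius coordinates) factorizes over those odd cycles in terms of the spin character $\PHIeasy{\xi}{\cdot}$ of $\Spin{m}$. Concretely I would pass through the Frobenius characteristic: recall $s_{D(\xi)}=\sum_\mu z_\mu^{-1}\chi^{D(\xi)}(\mu)\,p_\mu$ and the analogous expansion of Schur's $Q$-function $Q_\xi$ in the \emph{odd} power sums with coefficients proportional to the $\PHIeasy{\xi}{\cdot}$; the required formula then follows from the classical description of the image of $s_{D(\xi)}$ modulo the ideal generated by the even power sums (equivalently, from the Pfaffian/Frobenius-type formula for $\chi^{D(\xi)}$ on odd classes), together with the expansion that produces exactly one term per subset $I\subseteq\{1,\dots,\ell(\rho)\}$ when the cycles of $\rho$ are distributed between the two $Q$-factors. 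It then remains to match the normalizing constants, for which one uses the known relation between $\dim D(\xi)$, the number $g^\xi$ of shifted standard tableaux of shape $\xi$, and $\PHIeasy{\xi}{1^m}$.

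The main obstacle is precisely this last classical input together with the attendant bookkeeping: pinning down the exact form of the ``spin Frobenius formula'' relating $\chi^{D(\xi)}$ on odd-cycle classes to products of spin character values, with every power of $2$ and every falling factorial in the right place, and correctly accounting for the splitting $\PHIeasy{\xi}{\cdot}=\phi^\xi_++\phi^\xi_-$ of an irreducible spin character for strict partitions of ``negative type''. The normalization in \eqref{eq:projective-normalized-trololo} is chosen exactly so that all these constants conspire to give the clean sum over subsets in \eqref{eq:spin-vs-linear}; verifying that conspiracy is the heart of the proof.
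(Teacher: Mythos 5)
This proposition is not proved in the present paper at all --- it is imported verbatim from \cite[Theorem 3.1]{Matsumoto2018c} --- so there is no in-paper argument to compare against; measured against the proof in that reference, your route is essentially the same one. Evaluating both sides on $\xi\in\SP_m$, absorbing the powers of $2$ via the additivity of $|\cdot|$ and $\ell(\cdot)$ over the splitting $I\sqcup I^c$, and reducing to the classical doubling formula for $\chi^{D(\xi)}$ on odd-cycle classes (obtained from Schur's expansion of $Q_\xi$ in odd power sums together with the identity expressing $s_{D(\xi)}$ modulo the ideal generated by the even power sums in terms of $2^{-\ell(\xi)}Q_\xi^2$) is exactly how the cited result is established, with the distribution of the padding fixed points between the two factors producing the falling factorials. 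The only reservation is that your write-up defers the verification of the normalizing constants to ``classical input'' rather than carrying it out, which you acknowledge is the heart of the computation; as a proof strategy, however, it is correct and complete in outline.
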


\begin{proposition}
	\label{prop:not-odd-partitions}
	In the case when $\rho\notin\OP$ is \emph{not} an odd partition,
	$\double \Ch_\rho\in\Gamma$ is of degree at most \begin{equation}
	\label{eq:double-degree-bound-even}
	\begin{cases}
		\vertiii{\rho}-1 & \text{if $\rho$ contains \emph{exactly one} part which is even}, \\
		\vertiii{\rho}-2 & \text{if $\rho$ contains at \emph{least two} parts which are even}.
	\end{cases}
	\end{equation}
\end{proposition}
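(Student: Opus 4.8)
The plan is to deduce the degree bound from the known behaviour of $\double$ on the full basis $(\Ch_\rho)$ by exploiting the algebra-homomorphism property and an induction that strips off even parts. The key structural input is Proposition \ref{prop:double-preserves-degree}: $\double$ is an algebra homomorphism with $\double(\KO)=\Gamma$ which respects the filtration, so $\DEGREE_\Gamma \double\Ch_\rho \leq \DEGREE_\KO \Ch_\rho = \vertiii{\rho}$ automatically; the content of the claim is the \emph{strict} drop by $1$ or $2$ when $\rho$ has one, resp.\ at least two, even parts. First I would record the multiplicativity of $\double$ with respect to the disjoint product: since $\double$ is an algebra homomorphism for the pointwise product and, by Proposition \ref{thm:general_characters} together with a short computation, the disjoint product on $\KO$ is intertwined appropriately, one gets a usable factorization $\double\Ch_{\rho}$ in terms of $\double\Ch_{\rho'}$ for subpartitions $\rho'$. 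The cleanest route, though, is to isolate a single part.

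The main step is the \textbf{one-part reduction}. Write $\rho = \sigma \cup (m)$ where $(m)$ is a single part, and compare $\double(\Ch_\sigma \bullet \Ch_{(m)}) = \double \Ch_{\sigma\cup(m)}$ against $\double\Ch_\sigma \cdot \double\Ch_{(m)}$ using the disjoint-product cumulant expansion: by Proposition \ref{theo:afp-for-linear}, $\Ch_{\sigma\cup(m)} = \Ch_\sigma\Ch_{(m)} + (\text{lower order corrections from }\kumu_2,\dots)$ where "lower order" means degree $\leq \vertiii{\sigma\cup(m)} - 2$. Applying $\double$ (which preserves the filtration) and using the homomorphism property, $\double\Ch_{\sigma\cup(m)} = (\double\Ch_\sigma)(\double\Ch_{(m)}) + (\text{degree} \leq \vertiii{\rho}-2)$. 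So everything reduces to understanding $\double\Ch_{(m)}$ for a single even part $m$, and then bookkeeping degrees multiplicatively. For $m$ odd, Proposition \ref{thm:general_characters} gives $\double\Ch_{(m)} = 2\,\ChSpin_{(m)} + \ChSpin_\emptyset \ChSpin_{(m)}$-type terms, all of degree exactly $\vertiii{(m)} = m+1$. For $m$ even the claim predicts $\DEGREE \double\Ch_{(m)} \leq \vertiii{(m)} - 1 = m$; I would prove this base case directly, either from the explicit supersymmetric-polynomial description of $\Gamma$ in \cite{Matsumoto2018c} or by a character-evaluation argument (the point being that $\Ch_{(m)}$ for even $m$, when doubled, lies in the span of $\ChSpin_\tau$ with $\tau$ odd and $\vertiii{\tau}\leq m$, since supersymmetric polynomials of the relevant parity constraint cannot reach the top degree). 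Then: one even part contributes a deficit of $1$, and two even parts $m_1,m_2$ give, via the reduction applied twice, a product $(\double\Ch_{(m_1)})(\double\Ch_{(m_2)})$ of degrees $\leq m_1$ and $\leq m_2$ respectively, hence total degree $\leq m_1+m_2 = \vertiii{(m_1)}+\vertiii{(m_2)} - 2$, plus the remaining odd parts which are handled with no deficit; adding the ignored lower-order correction terms (degree $\leq\vertiii{\rho}-2$) preserves the bound.

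The \textbf{main obstacle} I anticipate is the base case: showing $\DEGREE_\Gamma \double\Ch_{(m)} \leq m$ for $m$ even, i.e.\ that the degree-$(m+1)$ component genuinely vanishes. Proposition \ref{thm:general_characters} is stated only for $\rho\in\OP$, so it does not directly apply, and one cannot simply read the answer off a formula. I would attack this by passing to the supersymmetric-polynomial realization of $\Gamma$ from \cite{Matsumoto2018c}, where $\ChSpin_\class$ corresponds to an explicit generator indexed by the odd partition $\class$, and where $\double\Ch_{(m)}$ corresponds to the image of the $m$-th power-sum-type generator of $\KO$; the parity of $m$ forces this image into a subspace missing the top filtration degree. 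An alternative, perhaps safer, route is to use the known transition between $\Ch_\rho$ and the free-cumulant / $R$-coordinate basis and track gradations, but that risks becoming the "routine calculation" one wants to avoid. Either way, once the single even part is controlled, the multiplicative bookkeeping above is straightforward and the general statement follows by the one-part reduction applied $\#\{\text{even parts of }\rho\}$ times.
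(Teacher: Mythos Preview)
Your approach is essentially the paper's: reduce to single parts via the approximate factorization property of $\id_\KO$ (the paper does the full moment--cumulant expansion of $\Ch_\rho = \id_\KO(\Ch_{\rho_1}\bullet\cdots\bullet\Ch_{\rho_\ell})$ at once rather than peeling off one part at a time, but this is cosmetic), then handle the single even part as a base case.

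Where you diverge is in the base case, which you flag as the \emph{main obstacle} and propose to attack via the supersymmetric-polynomial realization or the free-cumulant basis. The paper's argument here is a one-line parity observation that you are circling but not quite landing on: for any $\class\in\OP$ the quantity $\vertiii{\class}=|\class|+\ell(\class)$ is automatically \emph{even} (each odd part $\class_i$ contributes the even number $\class_i+1$), so the filtration $\G$ satisfies $\G_{2k+1}=\G_{2k}$ for every $k$. Now if $\rho$ has exactly one even part then $\vertiii{\rho}$ is odd, and since $\double$ preserves the filtration we get $\double\Ch_\rho\in\G_{\vertiii{\rho}}=\G_{\vertiii{\rho}-1}$ for free. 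In particular, for a single even part $m$ one has $\vertiii{(m)}=m+1$ odd, whence $\double\Ch_{(m)}\in\G_m$ immediately. No supersymmetric polynomials or $R$-coordinates are needed; the drop by $1$ is a pure artifact of the filtration on $\Gamma$ being concentrated in even degrees.
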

\begin{proof}

\medskip

We start with the special case when $\rho\notin\OP$ contains exactly one part
which is even. In this case $\vertiii{\rho}$ is an odd integer.

Clearly $\Ch_{\rho}\in\F_{\vertiii{\rho}}$; Eq.~\eqref{eq:double-filtration}
implies therefore that $\double\Ch_{\rho}\in\G_{\vertiii{\rho}}$. We revisit
the definition \eqref{eq:filtration-spin} of the filtration $\G$. Note that
$\vertiii{\class}$ is always an \emph{even} integer for any $\class \in
\OP$; it follows therefore that $\G_{2k+1}=\G_{2k}$ for any integer $k\geq 0$.
In particular, $\G_{\vertiii{\rho}}=\G_{\vertiii{\rho}-1}$. In this way we
proved that $\double \Ch_{\rho}\in \G_{\vertiii{\rho}-1}$, as required.

\medskip

Let $\rho=(\rho_1,\dots,\rho_\ell)\notin\OP$ be now a general partition. We
consider the identity map $\id_\KO \colon \KO_{\bullet}\to\KO$ and the
corresponding cumulants. The system of equations (analogous to
\eqref{eq:cumulants-in-terms-of-moments}) which express the cumulants in terms
of the moments can be inverted. The resulting moment-cumulant formula
\cite{Speed1983} expresses any given moment
\begin{equation}
	\label{eq:moment-ch} 
	\Ch_\rho= \id_\KO \left( \Ch_{\rho_1} \bullet \cdots \bullet \Ch_{\rho_\ell} \right) 
\end{equation}
as a polynomial in terms of the cumulants of the individual factors
$\Ch_{\rho_1},\dots,\linebreak \Ch_{\rho_\ell} $. For example,
\begin{align*}
  \Ch_{\rho_1} = \id_\KO \left( \Ch_{\rho_1}  \right) &= 
  \kumu_1^{\id_\KO} (\Ch_{\rho_1} ) ,\\
  \Ch_{\rho_1,\rho_2} = \id_\KO \left( \Ch_{\rho_1} \bullet \Ch_{\rho_2} \right) &= 
  \kumu_2^{\id_\KO} (\Ch_{\rho_1}, \Ch_{\rho_2} )+ 
          \kumu_1^{\id_\KO} (\Ch_{\rho_1})  \kumu_1^{\id_\KO} (\Ch_{\rho_2}).  
\end{align*}
In the general case, the summands in such an expansion of \eqref{eq:moment-ch}
can be split into the following two classes:
\begin{enumerate}[label=\emph{(\alph*)}]
	\item 
	\label{item:unique}
	the unique summand 
	\begin{equation} 
	\label{eq:the-unique-summand} 
	\kumu_1^{\id_\KO}(\Ch_{\rho_1}) \cdots \kumu_1^{\id_\KO}(\Ch_{\rho_{\ell}})=
	\Ch_{\rho_1} \cdots \Ch_{\rho_{\ell}}; 
	\end{equation}

	\item \label{item:remaining} the remaining summands; each such a summand
contains at least one factor with a cumulant $\kumu_k^{\id_\KO}$ for some $k\geq 2$.
\end{enumerate}
We apply the map $\double$ to \eqref{eq:moment-ch} or, equivalently, to its
aforementioned expansion to the products of cumulants and we investigate the
resulting terms. 

In the case \ref{item:unique}, by applying $\double$ to
\eqref{eq:the-unique-summand} we get
\begin{equation} 
\label{eq:the-unique-summand-d} 
(\double\Ch_{\rho_1}) \cdots (\double\Ch_{\rho_{\ell}}). 
\end{equation}
The discussion from the very beginning of this proof (the special case when
$\rho$ has exactly one even part) it follows that for each of the factors we
have that $\double\Ch_{\rho_{i}}\in \G_{\rho_i+1}$ if $\rho_{i}$ is odd and
$\double\Ch_{\rho_{i}}\in\G_{\rho_i}$ if $\rho_{i}$ is even. Thus the degree of
\eqref{eq:the-unique-summand-d} is bounded from above by
\[ \vertiii{\rho}- \text{(number of parts of $\rho$ which are even)}. \]

In the case \ref{item:remaining}, by approximate factorization property each
cumulant $\kumu_k^{\id_\KO}$ causes a decrease of the degree by $2(k-1)$. It
follows that the image of the considered summand under the map $\double$ is of
degree at most $\vertiii{\rho}-2$.

This completes the proof.
\end{proof}

\begin{remark}
It would be very interesting to have some explicit closed formula (maybe in the
flavour of Eq.~\eqref{eq:spin-vs-linear}) for $\double \Ch_\rho$ in the case
when $\rho\notin\OP$. Such a formula would make the link between
Kerov--Olshanski algebra $\KO$ and its spin counterpart $\Gamma$ even more
explicit.

We conjecture that the degree bound \eqref{eq:double-degree-bound-even} is not
optimal and that $\double\Ch_{\rho}$ is of degree at most
\[ \vertiii{\rho}-\text{(number of parts in $\rho$ which are even)}\]
for an arbitrary partition $\rho$. 
\end{remark}	

\subsection{Abstract viewpoint on \cref{thm:general_characters}}

We define the vector space
\[ \KOodd:= 
\lin\{ \Ch_\pi : \pi \in \OP \} \subset \KO \]
which is spanned by the characters corresponding to \emph{odd} partitions. This
vector space, equipped with the disjoint product $\bullet$, is a unital,
commutative algebra.

\medskip

We consider the algebra homomorphism $\Psi\colon \KOodd \to \Gamma_{\bullet}
\otimes \Gamma_{\bullet} $ which is defined on the algebraic basis of $\KOodd$
by an analogue of the Leibniz rule
\[ \Psi(\Ch_k)= 
   \ChSpin_k \otimes 1 + 1 \otimes \ChSpin_k \qquad \text{for any odd integer $k\geq 1$}.
\]
It follows that for any $\rho\in\OP$
\[ \Psi(\Ch_\rho)= 	\sum_{I \subseteq \{1,2,\dots,\ell(\rho)\}} 
\ChSpin_{\rho(I)} \otimes \ChSpin_{\rho(I^c)}.\]

The pointwise product gives rise to a bilinear function $m\colon \Gamma \times
\Gamma\to\Gamma$ given by $m(F,G):=F G$ which can be lifted to the unique linear
map $m\colon \Gamma \otimes \Gamma \to\Gamma$ on the tensor product. Because of
the isomorphism of the vector spaces $\Gamma \cong \Gamma_\bullet$ we will view
$m$ as a map $m\colon \Gamma_\bullet \otimes \Gamma_\bullet \to\Gamma$.

\medskip

With these notations, the following result is a straightforward reformulation of
\cref{thm:general_characters}.
\begin{corollary}
	The following diagram commutes
\tikzexternaldisable
\begin{equation} 
\label{eq:commutative-diagram}
\begin{tikzcd}
\KOodd \arrow[d, "\Psi"] \arrow[r,"\id_\KO"]
& \KO \arrow[d,"\double"] \\
\Gamma_{\bullet} \otimes \Gamma_{\bullet} \arrow[r, "m"]
& \Gamma 
\end{tikzcd},
\end{equation}
\tikzexternalenable
where the upper horizontal arrow is an inclusion of vector spaces.
\end{corollary}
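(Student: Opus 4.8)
The plan is to verify the commutativity of \eqref{eq:commutative-diagram} by evaluating both composites on a linear basis of $\KOodd$, using only that all four maps involved are linear. (In particular it is irrelevant that neither $\id_\KO$ nor $m$ is an algebra homomorphism; linearity is all that is needed, and it is what makes the reduction to a basis legitimate.) The natural basis is $\{\Ch_\rho : \rho\in\OP\}$, which spans $\KOodd$ by its very definition.

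First I would fix an odd partition $\rho\in\OP$ and chase $\Ch_\rho$ around the square both ways. Going right and then down: $\id_\KO$ is the inclusion of vector spaces, so the image is just $\Ch_\rho\in\KO$, and then $\double$ produces $\double\Ch_\rho\in\Gamma$. Going down and then right: the closed formula for $\Psi$ recorded just before the corollary gives
\[
\Psi(\Ch_\rho)=\sum_{I\subseteq\{1,2,\dots,\ell(\rho)\}}\ChSpin_{\rho(I)}\otimes\ChSpin_{\rho(I^c)},
\]
and since $m$ is the linearization of the pointwise product on $\Gamma$, applying $m$ yields
\[
m\bigl(\Psi(\Ch_\rho)\bigr)=\sum_{I\subseteq\{1,2,\dots,\ell(\rho)\}}\ChSpin_{\rho(I)}\,\ChSpin_{\rho(I^c)}\in\Gamma.
\]
These two elements of $\Gamma$ are literally the same, by \cref{thm:general_characters} (that is, by \eqref{eq:spin-vs-linear}). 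As $\double\circ\id_\KO$ and $m\circ\Psi$ are linear and agree on the spanning family $\{\Ch_\rho:\rho\in\OP\}$, they are equal, which is precisely the asserted commutativity.

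There is essentially no obstacle: the corollary is a repackaging of \cref{thm:general_characters}. The only point I would spell out for completeness is the passage from the generator-level definition $\Psi(\Ch_k)=\ChSpin_k\otimes 1+1\otimes\ChSpin_k$ to the closed formula for $\Psi(\Ch_\rho)$ used above. Since $\Psi$ is an algebra homomorphism $(\KOodd,\bullet)\to(\Gamma_\bullet\otimes\Gamma_\bullet,\bullet\otimes\bullet)$ and $\Ch_\rho=\Ch_{\rho_1}\bullet\cdots\bullet\Ch_{\rho_{\ell(\rho)}}$, one has $\Psi(\Ch_\rho)=\prod_{i=1}^{\ell(\rho)}\bigl(\ChSpin_{\rho_i}\otimes 1+1\otimes\ChSpin_{\rho_i}\bigr)$; expanding this product by choosing, for each index $i$, whether the factor $\ChSpin_{\rho_i}$ lands in the left or the right tensor leg reproduces exactly the sum over subsets $I\subseteq\{1,\dots,\ell(\rho)\}$ (and, since $\Gamma_\bullet$ is commutative, this also shows $\Psi$ is well defined, independently of the chosen order of the factors).
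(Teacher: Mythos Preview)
Your proof is correct and matches the paper's approach: the paper simply states that the corollary is a straightforward reformulation of \cref{thm:general_characters}, and your argument makes this explicit by checking both composites on the spanning set $\{\Ch_\rho:\rho\in\OP\}$ and invoking \eqref{eq:spin-vs-linear}. Your added paragraph deriving the closed formula for $\Psi(\Ch_\rho)$ from the generator-level definition is a welcome elaboration of a step the paper also records without proof.
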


\subsection{Cumulants of characters}

The following result provides a direct link between the cumulants for
$\id_\KO\colon \KO_\bullet\to \KO$ and $\id_\Gamma\colon \Gamma_{\bullet}\to
\Gamma$.

\begin{theorem}
	For any odd integers $k_1,\dots,k_\ell\geq 1$
	\begin{equation}  
	\label{eq:link-between-cumulants}
	\kumu^{\id_\Gamma}_\ell\left( \ChSpin_{k_1} , \dots,\ChSpin_{k_\ell} 
	\right) = \frac{1}{2} \double\left(  \kumu^{\id_\KO}_{\ell}(\Ch_{k_1},\dots,\Ch_{k_\ell} )  \right), 
	\end{equation}
	where the cumulant on the left hand side concerns the identity map 
	\[ \id_\Gamma \colon \Gamma_\bullet \to \Gamma\] 
	while the cumulant on the right-hand side concerns the identity map 
	\[\id_\KO \colon \KO_\bullet\to \KO.\]
\end{theorem}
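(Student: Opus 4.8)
Here is the plan. The identity \eqref{eq:link-between-cumulants} will be read off from the commutative diagram \eqref{eq:commutative-diagram}, together with the elementary fact that an algebra homomorphism commutes with the formal operations $\exp$ and $\log$ that enter the definition of the cumulants. The one thing one has to be scrupulous about is \emph{which} multiplication is used at each place: in $\kumu^F_\ell(a_1,\dots,a_\ell)=[t_1\cdots t_\ell]\log F(e^{t_1a_1+\cdots+t_\ell a_\ell})$ the exponential is formed with the product of the \emph{source} algebra (here the disjoint product $\bullet$) while the logarithm is taken with the product of the \emph{target} algebra (here the pointwise product). Since $\Ch_{k_1},\dots,\Ch_{k_\ell}$ have only odd parts, the relevant exponential stays inside $\KOodd[[t_1,\dots,t_\ell]]$, so $\kumu^{\id_\KO}_\ell(\Ch_{k_1},\dots,\Ch_{k_\ell})$ can equally well be computed for the inclusion $\KOodd\hookrightarrow\KO$ appearing in \eqref{eq:commutative-diagram}.

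First I would record the principle: if $G\colon\B\to\mathcal{C}$ is an \emph{algebra} homomorphism and $F\colon\A\to\B$ is any linear unital map, then $\kumu^{G\circ F}_\ell(a_1,\dots,a_\ell)=G\bigl(\kumu^F_\ell(a_1,\dots,a_\ell)\bigr)$, because $G$ commutes with the formal logarithm of a power series with constant term $1$. Applied to $G=\double$, which is an algebra homomorphism for the pointwise products by \cref{prop:double-preserves-degree}, and $F=\id_\KO$, this gives
\[ \double\bigl(\kumu^{\id_\KO}_\ell(\Ch_{k_1},\dots,\Ch_{k_\ell})\bigr)=\kumu^{\double\circ\id_\KO}_\ell(\Ch_{k_1},\dots,\Ch_{k_\ell}). \]
By \eqref{eq:commutative-diagram} the maps $\double\circ\id_\KO$ and $m\circ\Psi$ coincide on $\KOodd$, hence they produce the same cumulants at $\Ch_{k_1},\dots,\Ch_{k_\ell}$, so it remains to evaluate $\kumu^{m\circ\Psi}_\ell(\Ch_{k_1},\dots,\Ch_{k_\ell})$.

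Here $\Psi$ is a unital algebra homomorphism, hence it commutes with the formal exponential: $\Psi(e^{\sum_it_i\Ch_{k_i}})=e^{\sum_it_i\Psi(\Ch_{k_i})}$ with $\Psi(\Ch_{k_i})=\ChSpin_{k_i}\otimes 1+1\otimes\ChSpin_{k_i}$. Since $\ChSpin_{k_i}\otimes 1$ commutes with $1\otimes\ChSpin_{k_j}$ in $\Gamma_\bullet\otimes\Gamma_\bullet$, this exponential factors as $P\otimes P$, where $P=P(t_1,\dots,t_\ell):=e^{t_1\ChSpin_{k_1}+\cdots+t_\ell\ChSpin_{k_\ell}}$ is the formal exponential taken with the disjoint product in $\Gamma_\bullet$. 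Applying $m$, which is the pointwise product, turns this into $m(P\otimes P)=P^2\in\Gamma[[t_1,\dots,t_\ell]]$, so $\log m(e^{\sum_it_i\Psi(\Ch_{k_i})})=2\log P$. Since $[t_1\cdots t_\ell]\log P=\kumu^{\id_\Gamma}_\ell(\ChSpin_{k_1},\dots,\ChSpin_{k_\ell})$ by the very definition of the cumulants of $\id_\Gamma\colon\Gamma_\bullet\to\Gamma$, we obtain $\kumu^{m\circ\Psi}_\ell(\Ch_{k_1},\dots,\Ch_{k_\ell})=2\,\kumu^{\id_\Gamma}_\ell(\ChSpin_{k_1},\dots,\ChSpin_{k_\ell})$, and combining this with the previous display yields \eqref{eq:link-between-cumulants}.

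The only genuinely delicate point is the bookkeeping emphasised above: $\double$, $\Psi$ and the two embeddings of tensor factors are homomorphisms for the disjoint product, whereas $m$ is a homomorphism for neither product, and the exponential and the logarithm inside a cumulant refer to different multiplications. Once this is set up correctly, the rest is a short manipulation of exponential generating functions. As a consistency check one can verify the cases $\ell=1$ and $\ell=2$ by hand using \cref{thm:general_characters}, where $\double\Ch_{k_1}=2\ChSpin_{k_1}$ and $\double\Ch_{k_1,k_2}=2\ChSpin_{k_1,k_2}+2\ChSpin_{k_1}\ChSpin_{k_2}$, and indeed recovers $\kumu^{\id_\Gamma}_1(\ChSpin_{k_1})=\ChSpin_{k_1}$ and $\kumu^{\id_\Gamma}_2(\ChSpin_{k_1},\ChSpin_{k_2})=\ChSpin_{k_1,k_2}-\ChSpin_{k_1}\ChSpin_{k_2}$.
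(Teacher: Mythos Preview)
Your proof is correct and follows essentially the same route as the paper: both arguments push the cumulant through the commutative diagram \eqref{eq:commutative-diagram} using that $\double$ and $\Psi$ are algebra homomorphisms, and then extract the factor $2$ from the map $m$ applied to the symmetric tensor. The only cosmetic difference is in this last step: the paper expands $\kumu^m_\ell$ by multilinearity into $2^\ell$ terms and invokes classical independence of $\Gamma_\bullet\otimes 1$ and $1\otimes\Gamma_\bullet$ under $m$ to kill the mixed ones, whereas you manipulate the generating series directly via $m(P\otimes P)=P^2$ and $\log(P^2)=2\log P$ --- these are two phrasings of the same computation.
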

\begin{proof}
Since both vertical arrows in the commutative diagram
\eqref{eq:commutative-diagram} are unital homomorphisms of algebras, it follows
immediately that the cumulants for the two horizontal arrows are related by the
identity
\[ 
\double\left(  \kumu^{\id_\KO}_{\ell}(x_1,\dots,x_\ell )  \right) =
\kumu^m_\ell\left( \Psi(x_1),\dots,\Psi(x_\ell) \right) 
\]
for any $x_1,\dots,x_\ell\in \KOodd$. 

We will consider the special case of this
equality when each $x_i=\Ch_{k_i}$ is a character corresponding to a partition
$(k_i)\in\OP$ which consists of a single part which is odd.
Then
\begin{multline*}
\double\left(  \kumu^{\id_\KO}_{\ell}(\Ch_{k_1},\dots,\Ch_{k_\ell} )  \right) = \\
\kumu^m_\ell\left( \ChSpin_{k_1} \otimes 1+ 1\otimes \ChSpin_{k_1},
     \dots,\ChSpin_{k_\ell} \otimes 1+ 1\otimes \ChSpin_{k_\ell} \right).
\end{multline*}

Since the cumulant is linear with respect to each of its arguments, the
right-hand side can be expanded to $2^\ell$ summands. Each summand is the
cumulant $\kumu^m_\ell$ applied to an $\ell$-tuple of elements, with each
element either from the subalgebra $\Gamma_{\bullet}\otimes 1$ or from the
subalgebra $1\otimes \Gamma_{\bullet}$. Since these two subalgebras are
classically independent with respect to the expected value $m$, any mixed
cumulant vanishes. It follows that out of these $2^\ell$ summands there are only
two which are non-zero and
\begin{multline*}
\kumu^m_\ell\left( \ChSpin_{k_1} \otimes 1+ 1\otimes \ChSpin_{k_1},
\dots,\ChSpin_{k_\ell} \otimes 1+ 1\otimes \ChSpin_{k_\ell} \right) = \\
\kumu^m_\ell\left( \ChSpin_{k_1} \otimes 1, \dots,\ChSpin_{k_\ell} \otimes 1
\right)+ \kumu^m_\ell\left( 1\otimes \ChSpin_{k_1}, \dots,1\otimes
\ChSpin_{k_\ell} \right) = \\
2 \kumu^{\id_\Gamma}_\ell\left( \ChSpin_{k_1} , \dots,\ChSpin_{k_\ell} 
\right).
\end{multline*}
\end{proof}

\subsection{Proof of \cref{theo:afp-for-spin}}
\label{sec:proof-of-afp-for-spin}

\begin{proof}[Proof of \cref{theo:afp-for-spin}]
By \cref{lem:generators-afp-ok}, in order to prove the first part 
it is enough to show that the cumulant on the left-hand side of
\eqref{eq:link-between-cumulants} is an element of $\Gamma$ of degree at most
\[ (k_1+1)+\cdots+(k_\ell+1)- 2(\ell-1). \]
However, the approximate factorization property for the map $\id_\KO$
(\cref{theo:afp-for-linear}) combined with \cref{prop:double-preserves-degree}
show this degree bound for the right-hand side of
\eqref{eq:link-between-cumulants}, as required.

\medskip

For the second part we apply \eqref{eq:link-between-cumulants} in the special
case $\ell=2$. The second cumulant $\kumu_2^{\id_\KO}$ which contributes to the
right-hand side is explicitly given by \eqref{eq:cumulant-disjoint}. It remains
now to evaluate $\double \Ch_\rho$ for
\[ \rho=\big( (a_1+b_1-1), \dots, (a_r+b_r-1) \big). \]

A simple parity argument shows that there is an even number of even parts of
such a partition $\rho$.  In particular, if $\rho\notin\OP$ then the number of its even
parts is at least $2$. \cref{prop:not-odd-partitions} is then applicable and
shows that in this case $\double \Ch_{\rho}$ is of degree at most $k_1+k_2-2$.

In the opposite case when $\rho\in \OP$, the exact value of $\double \Ch_{\rho}$
is given by \cref{thm:general_characters}. This exact form can be simplified
thanks to the observation that by approximate factorization property for $\id_\Gamma$
\begin{multline*}
\kumu_2^{\id_\Gamma}\left( \ChSpin_{\rho(I)} , \ChSpin_{\rho(I^c)} \right) 
= \ChSpin_{\rho(I)} \bullet \ChSpin_{\rho(I^c)} -
\ChSpin_{\rho(I)}  \ChSpin_{\rho(I^c)} = \\
\ChSpin_{\rho} -\ChSpin_{\rho(I)}  \ChSpin_{\rho(I^c)} \in \G_{\vertiii{\rho}-2};
\end{multline*}
it follows that
\[ \double \Ch_\rho = 2^{\ell(\rho)} \ChSpin_{\rho} + 
\text{(summands of degree at most $\vertiii{\rho}-2$)}.
\]
which completes the proof.
\end{proof}

\subsection{Free cumulants revisited}

We revisit \cref{sec:free-cumulants}.
For an integer $n\geq 2$ and $\xi\in \SP$ we define
\begin{align*} 
S_n^{\spin}(\xi) &= S_n(\omega_\xi) = 
 (n-1) \int_\R  z^{n-2} \sigma_{\omega_\xi} (z) \dif z, \\
R_n^{\spin}(\xi) &= R_n(\omega_\xi).  
\end{align*}
By the symmetry of the profile $\omega_{\xi}\colon\R\to\R$ it follows that
$S^{\spin}_n=R^{\spin}_n=0$ if $n$ is an odd integer. In the following we view
$S^{\spin}_n,R^{\spin}_n \colon \SP\to\R$ as \emph{functions} on the set of
strict partitions.

Note that the free cumulants for strict partitions defined above and the ones
considered by Matsumoto \cite{Matsumoto2018} differ by a factor of $2$.

\begin{proposition}
\label{prop:SR-generate}
For each even integer $n\geq 2$ we have that $S^{\spin}_n,R^{\spin}_n\in\Gamma$
with $\DEGREE S^{\spin}_n=\DEGREE R^{\spin}_n=n$.
Furthermore, $(S^{\spin}_2,S^{\spin}_4,\dots)$ as well as 
$(R^{\spin}_2,R^{\spin}_4,\dots)$ generate $\Gamma$ as a filtered algebra.
\end{proposition}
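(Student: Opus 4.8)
The plan is to transport the corresponding facts about the Kerov--Olshanski algebra $\KO$ across the algebra homomorphism $\double\colon\KO\to\Gamma$ of \cref{prop:double-preserves-degree}. I will use three standard facts about $\KO$ with the filtration \eqref{eq:filtration-KO} (see \cite{Biane1998,DolegaFeraySniady2008}): $\DEGREE_\KO R_n=\DEGREE_\KO S_n=n$; the families $(R_2,R_3,\dots)$ and $(S_2,S_3,\dots)$ each generate $\KO$ as a filtered algebra; and the Kerov polynomial has the sharp form $R_n=\Ch_{n-1}+(\text{element of }\F_{n-2})$. On the spin side I will use: $\{\ChSpin_k:k\ge1\text{ odd}\}$ generates $\Gamma$ as a filtered algebra --- immediate from the approximate factorization property of $\id_\Gamma$ (\cref{theo:afp-for-spin}), since $\ChSpin_{k_1}\cdots\ChSpin_{k_\ell}=\ChSpin_{(k_1,\dots,k_\ell)}+(\text{lower degree})$; the identity $\G_{2m+1}=\G_{2m}$ from the proof of \cref{prop:not-odd-partitions}; and $\double\Ch_{2k-1}=2\,\ChSpin_{2k-1}$, which is the specialization of \eqref{eq:spin-vs-linear} to the one-part (odd) partition $(2k-1)$.

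The geometric input, and the one place a genuine verification is needed, is the description of the profile of a double: for $\xi\in\SP$ with $\lambda=D(\xi)$,
\[ \sigma_{\omega_\xi}(z)=\sigma_{\omega_{\lambda}}\!\left(z+\tfrac12\right)\qquad\text{for all }z\in\R, \]
i.e.\ the asymmetric measure $\sigma_{\omega_{D(\xi)}}$ is the symmetric measure $\sigma_{\omega_\xi}$ translated by $\tfrac12$; the value $\tfrac12$ of the shift is forced by $\sum_{c\in D(\xi)}\content(c)=|\xi|=\tfrac12|D(\xi)|$, which puts the centre of mass of $\sigma_{\omega_{D(\xi)}}$ at $\tfrac12$, and the half-integer offsets in the shifted Russian convention \eqref{eq:italian} are tailored precisely so that the profiles match. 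Substituting $u=z+\tfrac12$ in $S^{\spin}_{2k}(\xi)=(2k-1)\int z^{2k-2}\sigma_{\omega_\xi}(z)\,\mathrm dz$, expanding by the binomial theorem, and using that the odd moments of the symmetric $\sigma_{\omega_\xi}$ vanish (equivalently $S^{\spin}_{\mathrm{odd}}=0$), one gets, for every even $n=2k\ge2$,
\[ S^{\spin}_{2k}=\double S_{2k}+\textstyle\sum_{m=2}^{2k-1}c_m\,\double S_m, \qquad c_m\in\Q. \]
Since $\DEGREE_\KO S_m=m$, each $\double S_m$ with $m\le 2k-1$ lies in $\G_{2k-2}$ (using $\G_{2k-1}=\G_{2k-2}$), so $S^{\spin}_{2k}=\double S_{2k}+(\text{element of }\G_{2k-2})$; in particular $S^{\spin}_{2k}\in\double(\KO)=\Gamma$. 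For the degree, expand $S_{2k}=R_{2k}+(\text{products }R_{j_1}\cdots R_{j_m},\ \sum j_i=2k,\ m\ge2,\ j_i\ge2)$ via \eqref{eq:s-in-r} and apply $\double$; the sharp Kerov polynomial and $\double\Ch_{2k-1}=2\,\ChSpin_{2k-1}$ give $\double R_{2k}\equiv 2\,\ChSpin_{2k-1}\pmod{\G_{2k-2}}$, while any product with an odd $j_i$ falls into $\G_{2k-2}$ and an all-even product $R_{j_1}\cdots R_{j_m}$ is sent to a combination of basis characters $\ChSpin_{\rho}$ indexed by odd partitions $\rho$ with at least two parts and $\vertiii\rho=2k$. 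By linear independence of the $\ChSpin_\rho$ the leading term $2\,\ChSpin_{2k-1}$ does not cancel, so $\DEGREE S^{\spin}_{2k}=2k$ and $S^{\spin}_{2k}=2\,\ChSpin_{2k-1}+(\text{element of }\G_{2k-2})$.

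The $R^{\spin}_{2k}$ statements follow the same way: by \eqref{eq:r-in-s}, $R^{\spin}_{2k}=S^{\spin}_{2k}+(\text{products }S^{\spin}_{j_1}\cdots S^{\spin}_{j_m},\ \sum j_i=2k,\ m\ge2)$, where, since $S^{\spin}_{\mathrm{odd}}=0$, only all-even products survive; substituting $S^{\spin}_{j_i}=2\,\ChSpin_{j_i-1}+(\text{lower})$ and using approximate factorization shows $R^{\spin}_{2k}=2\,\ChSpin_{2k-1}+(\text{element of }\G_{2k-2})\in\Gamma$ and $\DEGREE R^{\spin}_{2k}=2k$. Finally, the triangular identities $S^{\spin}_{2k}=2\,\ChSpin_{2k-1}+(\text{element of }\G_{2k-2})$ and $R^{\spin}_{2k}=2\,\ChSpin_{2k-1}+(\text{element of }\G_{2k-2})$ let one substitute $\ChSpin_{2k-1}$ by $S^{\spin}_{2k}$ (respectively $R^{\spin}_{2k}$) in the generating set $(\ChSpin_{2k-1})_{k\ge1}$ of $\Gamma$, one degree at a time and respecting the bound \eqref{eq:generates-as-filtered-algebra}; hence $(S^{\spin}_2,S^{\spin}_4,\dots)$ and $(R^{\spin}_2,R^{\spin}_4,\dots)$ each generate $\Gamma$ as a filtered algebra. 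The hard part is the geometric identity between $\omega_{D(\xi)}$ and $\omega_\xi$; granted it, the remainder is triangularity bookkeeping combined with the already-established properties of $\double$ and $\id_\Gamma$.
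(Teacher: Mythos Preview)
Your proof is correct and rests on the same geometric input as the paper's --- that $\sigma_{\omega_{D(\xi)}}$ is the translate of $\sigma_{\omega_\xi}$ by $\tfrac12$ --- from which both arguments extract a unitriangular linear relation between $(S^{\spin}_{2j})$ and $(\double S_{2j})$. The paper expands $(z+\tfrac12)^{n-2}$ to write each $\double S_n$ (for even $n$) as a combination of $S^{\spin}_{2j}$'s, observes that the two families have the same linear span at every even cut-off, and from this rather tersely reads off both $\DEGREE S^{\spin}_{2k}=2k$ and the filtered-generation claim; the $R^{\spin}$ case is then immediate from \eqref{eq:r-in-s}--\eqref{eq:s-in-r}. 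You invert the relation and, in order to nail the degree exactly, bring in the sharp Kerov polynomial $R_{2k}=\Ch_{2k-1}+(\text{lower})$, the identity $\double\Ch_{2k-1}=2\,\ChSpin_{2k-1}$, and \cref{theo:afp-for-spin} to exhibit the leading term $S^{\spin}_{2k}\equiv 2\,\ChSpin_{2k-1}\pmod{\G_{2k-2}}$; the generation claim then follows by swapping $(\ChSpin_{2k-1})$ for $(S^{\spin}_{2k})$ one degree at a time. So the approach is essentially the same; your version imports a bit more machinery but in return makes the exact-degree claim and the triangular substitution completely explicit, where the paper is content with the span equality.
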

\begin{proof}

Comparison of \cref{fig:RussianDario} and \cref{fig:Russian} shows that the
measures $\sigma_{\omega_{\xi}}$ and $\sigma_{\omega_{D(\xi)} }$ are equal, up
to a translation by $\frac{1}{2}$. 
In particular,
\begin{multline*} 
\frac{1}{n-1} (\double S_n) (\xi) = \frac{1}{n-1} S_n (D(\xi)) = 
\int_\R  z^{n-2} \sigma_{\omega_{D(\xi)}} (z) \dif z =\\
\int_\R  \left(z+\frac{1}{2}\right)^{n-2} \sigma_{\omega_\xi} (z) \dif z = 
\sum_{\substack{0\leq k\leq n-2,\\ \text{$k$ is even}}} 
         \binom{n-2}{k} \frac{1}{2^{n-k-2} (k+1)} S^{\spin}_{k+2}(\xi).
\end{multline*}
The collection of such equalities over even integers $n\in\{2,4,\dots,2k\}$
shows that the linear span (with rational coefficients) of the functions
$\double S_2, \linebreak \double S_4, \dots, \double S_{2k}$ is equal to the
linear span (also with rational coefficients) of the functions $S^{\spin}_2,
S^{\spin}_4,\dots, S^{\spin}_{2k}$. This has a twofold consequence. Firstly,
$S^{\spin}_{2n}\in \Gamma$ is of degree  $2n$, as required. Secondly,
$(S^{\spin}_2,S^{\spin}_4,\dots)$ generate $\Gamma$ as a filtered algebra, as
required.

The systems of equations \eqref{eq:r-in-s} and \eqref{eq:s-in-r} imply that the
analogous claims hold as well for the free cumulants
$R^{\spin}_2,R^{\spin}_4,\dots$.
\end{proof}

\section{The key tool}
\label{sec:key}

Suppose that we are given a (sequence of) spin representation(s) and the
corresponding (sequence of) character ratio(s).  As we already mentioned,
sometimes it is more convenient to pass from the character ratio to the
corresponding cumulants. Interestingly, there are \emph{three} distinct natural
types of such cumulants, each having its own advantages. In the current section
we will review them and prove the key tool of the current paper,
\cref{theo:key-tool} which provides a link between them.

\subsection{Three types of cumulants for the character ratios}

We will use the setup considered in \cref{sec:AFP-partitions-revisited},
i.e.~$\big( V^{(n)} \big)$ is a sequence of representations and
$\big(\chi^{(n)}\big)$ is the corresponding sequence of the character ratios.

\subsubsection{Cumulants of partitions} 

The first type of cumulants we will use are the ones which correspond to the
linear map $\chi \colon \C[\OP] \to \Sequences$, see
Equation~\eqref{eq:chi-to-sequences}. These cumulants 
$\kumu^\chi_\ell(\pi_1,\dots,\pi_\ell)$ are indexed by odd partitions
$\pi_1,\dots,\pi_\ell\in\OP$.

The advantage of these cumulants lies in the observation that in the
applications we are often given a representation in terms of its characters and
thus such cumulants can be often calculated explicitly without much effort.
Regretfully, these cumulants do not have a truly probabilistic interpretation.
This kind of cumulants will appear in \cref{theo:key-tool} within conditions
\ref{item:AFP-partitions-one} and \ref{item:AFP-partitions-multi}.

\subsubsection{Cumulants in $\Gamma$}
\label{sec:cumulants-probabilistic}

Let us fix for a moment an integer $n\geq 1$. We consider the discrete
probability space $\Omega_n=\SP_n$ equipped with the probability distribution
$\PP^{V^{(n)}}$; we denote by $\xi^{(n)}$ a random strict partition in $\SP_n$
with the same probability distribution $\PP^{V^{(n)}}$. By restricting the
domain of the functions, any element $X\in\Gamma$ can be viewed as a function on
the set $\SP_n$ or, equivalently, as a random variable on the probability space
$\Omega_n$. We are interested in its expected value $\E^{(n)} X= \E X\big(
\xi^{(n)} \big)$.

We define a unital map $\E_\Gamma \colon \Gamma \to \Sequences$ by setting
\[ \E_\Gamma  X = \big( \E^{(1)} X, \E^{(2)} X, \dots \big)\]
for any $X\in\Gamma$. In the case when $X$ is a normalized spin character, this
definition takes the following more concrete form
\[ \E_\Gamma (\ChSpin_\pi) = (x_1,x_2,\dots), \]
where 
\[ x_n=  \E\ \Ch_{\pi}(\xi^{(n)}) =
n^{\downarrow |\pi|}\ 2^{\frac{\|\pi\|}{2}}
\ \chi^{(n)}(\pi). \]
The last equality is a consequence of the definition
\eqref{eq:projective-normalized-trololo} of the normalized spin characters.

The cumulants $\kumu^{\E_\Gamma}_\ell(X_1,\dots,X_\ell)$ which correspond to this map
have a direct probabilistic meaning. This kind of cumulants will appear in
\cref{theo:key-tool} within conditions \ref{item:AFP-gamma-special} and
\ref{item:AFP-gamma-general}.

\subsubsection{Cumulants in $\Gamma_\bullet$}

The equality $\Gamma=\Gamma_{\bullet}$ between the vector spaces allows us to
view the aforementioned map $\E_{\Gamma}$ as a function on $\Gamma_{\bullet}$.
We will denote it by $\E_{\Gamma_\bullet} \colon \Gamma_\bullet \to \Sequences$.
The cumulants $\kumu^{\E_{\Gamma_\bullet}}_\ell(X_1,\dots,X_\ell)$ which correspond to
this map do not have a probabilistic meaning. This kind of cumulants will appear
in \cref{theo:key-tool} within conditions \ref{item:AFP-gamma-bullet-special}
and \ref{item:AFP-gamma-bullet-general}.

Since the algebras $\Gamma$ and $\Gamma_{\bullet}$ have different multiplicative
structures, the cumulants for the maps $\E_{\Gamma}$ and $\E_{\Gamma_{\bullet}}$
are also different.

\subsection{The key tool} 

The following theorem is a direct analogue of a result of \sniady \cite[Theorem
and Definition 1]{Sniady2006} which concerns the usual (non-projective)
representations of the symmetric groups, see also \cite[Theorem
2.3]{DolegaSniady2018} for a generalization to Jack characters.

In the following for an integer (or half-integer) $k$ we denote by
$n^k:=(1^k,2^k,\dots)\in\Sequences$ the sequence of powers of the integers.

\begin{theorem}
\label{theo:key-tool} 

Assume that for each integer $n\geq 1$ we are given a representation $V^{(n)}$
of $\SGA{n}$.

Then the following conditions are equivalent:
\begin{enumerate}[label=\emph{(\alph*)}]
	
\item 
\label{item:AFP-partitions-one}
for all odd partitions $\pi_1=(k_1), \dots, \pi_\ell=(k_\ell)\in\OP$
\emph{which consist of exactly one part}
\[ \kumu_\ell^{\chi}( \pi_1,\dots,\pi_\ell) 
   \  n^{\frac{\|\pi_1\|+\cdots+\|\pi_\ell\|+2(\ell-1) }{2}} = O(1), \]

\item 
\label{item:AFP-partitions-multi}
for all odd partitions $\pi_1, \dots,\pi_\ell\in\OP$
\[ \kumu_\ell^{\chi}( \pi_1,\dots,\pi_\ell) 
    \  n^{\frac{\|\pi_1\|+\cdots+\|\pi_\ell\|+2(\ell-1) }{2}} = O(1), \]

\item 
\label{item:AFP-gamma-special}
there exists a set $X\subseteq \Gamma$ which generates $\Gamma$ as a
filtered algebra with the property that for all $x_1,\dots,x_\ell\in X$
\[ \kumu_\ell^{\E_{\Gamma}}( x_1,\dots,x_\ell) 
    \  n^{-\frac{\DEGREE x_1+ \cdots +\DEGREE x_l- 2(\ell-1)}{2}} = O(1), \]

\item 
\label{item:AFP-gamma-general}
for all $x_1,\dots,x_\ell\in \Gamma$
\[ \kumu_\ell^{\E_{\Gamma}}( x_1,\dots,x_\ell) 
     \  n^{-\frac{\DEGREE x_1+ \cdots +\DEGREE x_\ell- 2(\ell-1)}{2}} = O(1), \]

\item 
\label{item:AFP-gamma-bullet-special}
there exists a set $X\subseteq \Gamma_\bullet$ which generates $\Gamma_\bullet$ as a
filtered algebra with the property that for all $x_1,\dots,x_\ell\in X$
\[ \kumu_\ell^{\E_{\Gamma_\bullet}}( x_1,\dots,x_\ell) 
    \  n^{-\frac{\DEGREE x_1+ \cdots +\DEGREE x_\ell- 2(\ell-1)}{2}} = O(1), \]

\item 
\label{item:AFP-gamma-bullet-general}
for all $x_1,\dots,x_\ell\in \Gamma$
\[ \kumu_\ell^{\E_{\Gamma_\bullet}}( x_1,\dots,x_\ell) 
    \  n^{-\frac{\DEGREE x_1+ \cdots +\DEGREE x_\ell- 2(\ell-1)}{2}} = O(1), \]

\end{enumerate}

Furthermore, if the left-hand side of any of the expressions above has a limit
for $n\to \infty$ for all $\ell\leq 2$ and all prescribed choices of
$\pi_1,\dots,\pi_\ell$ (respectively, $x_1,\dots,x_\ell$), then each of the the
left-hand sides of the above expressions has a limit for $n\to\infty$. These
limits for $\ell\leq 2$ are interrelated as follows: for any odd integers
$k,k_1,k_2\geq 1$ 
\begin{align*}
\free_{k+1} & := \lim_{n\to\infty} n^{\frac{k-1}{2}}\ \chi^{(n)}(k) = \\
& = \lim_{n\to\infty} n^{\frac{k-1}{2}}\ \E \chi^{\xi^{(n)}}(k) = 
\\
& = \lim_{n\to\infty} 
(2n)^{-\frac{k+1}{2}}\cdot 2 \cdot \E \Ch^{\spin}_{k}(\xi^{(n)})  = \\
& = \lim_{n\to\infty} 
(2n)^{-\frac{k+1}{2}}\ \E R^{\spin}_{k+1}(\xi^{(n)}),  \\[2ex]	
\covarianceDisjoint_{k_1+1,k_2+1} & := 
\lim_{n\to\infty} n^{\frac{k_1+k_2}{2}}\cdot 2\cdot 
\left(  \chi^{(n) }(k_1,k_2) - \chi^{(n) }(k_1)\ \chi^{(n) }(k_2)  \right), \\[2ex]
\covarianceProba_{k_1+1,k_2+1} & := 
\lim_{n\to\infty} n^{\frac{k_1+k_2}{2}}\cdot 
2\cdot \Cov\left(  \chi^{\xi^{(n)}}(k_1), \chi^{\xi^{(n)}}(k_2)  \right)  \\
 & = \lim_{n\to\infty} (2n)^{-\frac{k_1+k_2}{2}} 
 \Cov\left(  R^{\spin}_{k_1+1}(\xi^{(n)}), R^{\spin}_{k_2+1}(\xi^{(n)})  \right), \\
\intertext{where $\xi^{(n)}$ denotes a random strict partition 
	with the probability distribution $\PP^{V^{(n)}}$, }
\covarianceProba_{k_1+1,k_2+1}  &= 
\covarianceDisjoint_{k_1+1,k_2+1} -2 k_1 k_2\ \free_{k_1+1} \free_{k_2+1}+ \\ & + 2
\sum_{r\geq 1} \frac{k_1 k_2}{r} 
\sum_{(a_i), (b_i) }
\free_{a_1+b_1} \cdots \free_{a_r+b_r},  
\end{align*}
where the last sum runs over integers
$a_1,\dots,a_r,b_1,\dots,b_r\geq 1$ such that $a_1+\cdots+a_r=k_1$, and
$b_1+\cdots+b_r=k_2$ and for each $i\in[r]$ the sum $a_i+b_i$ is even.

\end{theorem}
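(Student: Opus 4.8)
The plan is to prove the circle of implications \ref{item:AFP-partitions-one}$\Rightarrow$\ref{item:AFP-partitions-multi}$\Rightarrow$\ref{item:AFP-gamma-bullet-general}$\Rightarrow$\ref{item:AFP-gamma-bullet-special}$\Rightarrow$\ref{item:AFP-partitions-one}, together with \ref{item:AFP-gamma-bullet-general}$\Leftrightarrow$\ref{item:AFP-gamma-general} and the ``local'' equivalences with \ref{item:AFP-gamma-special}, \ref{item:AFP-gamma-bullet-special}; after that the asymptotic identities of the last part come out by unwinding the normalizations. The two conceptually easy groups of equivalences are handled by functoriality of the approximate factorization category, and the single hard implication is the passage between the ``character ratio side'' $\chi\colon\C[\OP]\to\Sequences$ and the ``spin Kerov--Olshanski side'' $\E_{\Gamma_\bullet}\colon\Gamma_\bullet\to\Sequences$.

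\emph{Reductions via generators and functoriality.} Conditions \ref{item:AFP-partitions-one} and \ref{item:AFP-partitions-multi} assert precisely that the map $\chi$ from \eqref{eq:chi-to-sequences} satisfies the inequality \eqref{eq:afp} on the generating set of single-part odd partitions, respectively on all of $\C[\OP]$; since the single-part partitions generate $\C[\OP]$ as a filtered algebra (see the proof of \cref{obs:afp-revisited}), \cref{lem:generators-afp-ok} gives \ref{item:AFP-partitions-one}$\Leftrightarrow$\ref{item:AFP-partitions-multi}. In the same way, because $\DEGREE\ChSpin_{\pi_1}+\dots+\DEGREE\ChSpin_{\pi_r}=\sum_i(\pi_i+1)=\vertiii{\pi}=\DEGREE\ChSpin_\pi$, the set $\{\ChSpin_k:k\text{ odd}\}$ generates both $\Gamma$ and $\Gamma_\bullet$ as filtered algebras, so \cref{lem:generators-afp-ok} yields \ref{item:AFP-gamma-special}$\Leftrightarrow$\ref{item:AFP-gamma-general} and \ref{item:AFP-gamma-bullet-special}$\Leftrightarrow$\ref{item:AFP-gamma-bullet-general}. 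Finally \ref{item:AFP-gamma-general}$\Leftrightarrow$\ref{item:AFP-gamma-bullet-general}: by \cref{theo:afp-for-spin} the map $\id_\Gamma\colon\Gamma_\bullet\to\Gamma$ has the approximate factorization property, it clearly preserves the degree and is bijective, so \cref{lem:afp-inverse} applies to $\id_\Gamma^{-1}$; since $\E_{\Gamma_\bullet}=\E_\Gamma\circ\id_\Gamma$ and $\E_\Gamma=\E_{\Gamma_\bullet}\circ\id_\Gamma^{-1}$, \cref{lem:category} transports the property both ways.

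\emph{The heart: \ref{item:AFP-partitions-multi}$\Leftrightarrow$\ref{item:AFP-gamma-bullet-general}.} By the above reductions it is enough to compare, for odd $k_1,\dots,k_\ell\ge1$, the cumulant $\kumu^\chi_\ell\big((k_1),\dots,(k_\ell)\big)$ with $\kumu^{\E_{\Gamma_\bullet}}_\ell\big(\ChSpin_{k_1},\dots,\ChSpin_{k_\ell}\big)$. Using $\ChSpin_k^{\bullet m}=\ChSpin_{(k^m)}$ and the identity $\E_{\Gamma_\bullet}(\ChSpin_\pi)=n^{\downarrow|\pi|}\,2^{\|\pi\|/2}\,\chi(\pi)$ which follows from the definition \eqref{eq:projective-normalized-trololo}, one obtains a termwise comparison of the two exponential generating functions; taking logarithms and extracting the coefficient of $t_1\cdots t_\ell$ it reads
\[ \kumu^{\E_{\Gamma_\bullet}}_\ell\big(\ChSpin_{k_1},\dots,\ChSpin_{k_\ell}\big)= 2^{\frac{(k_1-1)+\dots+(k_\ell-1)}{2}}\; n^{k_1+\dots+k_\ell}\;\kumu^\chi_\ell\big((k_1),\dots,(k_\ell)\big)\;+\;(\text{correction}), \]
where the correction collects the contributions of $n^{\downarrow M}-n^{M}=O(n^{M-1})$ over the set partitions of $[\ell]$. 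Since $\DEGREE_\Sequences\big(n^{k_1+\dots+k_\ell}\big)=2(k_1+\dots+k_\ell)$ while $\DEGREE_{\C[\OP]}(k_i)=-(k_i-1)$, the main term exactly meets the target degree $\sum_i(k_i+1)-2(\ell-1)$. Assuming \ref{item:AFP-partitions-multi}, every summand of the correction is a product of lower-order $\kumu^\chi$'s times a factor that is $O(n^{-1})$ smaller than its leading counterpart, hence of strictly smaller $\Sequences$-degree, so \ref{item:AFP-gamma-bullet-general} follows. The converse is the technical crux: one argues by induction on $\ell$, the base case $\ell=1$ being the elementary identity $\E_{\Gamma_\bullet}(\ChSpin_k)=n^{\downarrow k}2^{(k-1)/2}\chi^{(n)}(k)$ (which already forces $\chi^{(n)}(\pi)=O\big(n^{-\|\pi\|/2}\big)$ for every $\pi\in\OP$), and the inductive step inverts the displayed relation while controlling the correction terms by means of the previously established bounds. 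This is the spin analogue of the argument of \sniady \cite[Section~4.7]{Sniady2006}, and I expect it to be the main obstacle.

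\emph{The asymptotic identities.} The displayed dictionary is ``triangular with asymptotically constant diagonal'', so the mere existence of the rescaled limits for $\ell\le2$ in one formulation propagates to all of them; for instance $\E\chi^{\xi^{(n)}}(k)=\chi^{(n)}(k)$ by \eqref{eq:linear-combination}, and $(2n)^{-\frac{k+1}{2}}\cdot2\cdot\E\ChSpin_k(\xi^{(n)})=\frac{n^{\downarrow k}}{n^{k}}\;n^{\frac{k-1}{2}}\chi^{(n)}(k)\to\free_{k+1}$. The identification with $R^{\spin}_{k+1}$ uses $R^{\spin}_{k+1}=2\,\ChSpin_k+(\text{terms of degree}\le k-1)$, the spin counterpart of the classical fact that $R_{k+1}$ and $\Ch_k$ share their top-degree part: it follows by applying $\double$, invoking $\double\Ch_k=2\ChSpin_k$ (the case $\rho=(k)$ of \cref{thm:general_characters}) and \cref{prop:SR-generate}, and using that $\E_\Gamma$ is degree non-increasing so the lower-order part is $o(1)$ after rescaling. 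For the covariance I would use Brillinger's formula for $\E_{\Gamma_\bullet}=\E_\Gamma\circ\id_\Gamma$ at order two, namely $\kumu^{\E_\Gamma}_2(\ChSpin_{k_1},\ChSpin_{k_2})=\kumu^{\E_{\Gamma_\bullet}}_2(\ChSpin_{k_1},\ChSpin_{k_2})-\E_\Gamma\big(\kumu^{\id_\Gamma}_2(\ChSpin_{k_1},\ChSpin_{k_2})\big)$; inserting the explicit second cumulant \eqref{eq:second-cumulant}, applying $\E_\Gamma$ to each term and rescaling, the $\ell=2$ discrepancy $n^{\downarrow k_1}n^{\downarrow k_2}-n^{\downarrow(k_1+k_2)}=k_1k_2\,n^{k_1+k_2-1}+O(n^{k_1+k_2-2})$ produces the term $-2k_1k_2\free_{k_1+1}\free_{k_2+1}$, the $\ChSpin_\rho$-sum in \eqref{eq:second-cumulant} (with $\chi^{(n)}$ evaluated on the odd partition $\rho=(a_1+b_1-1,\dots,a_r+b_r-1)$ by approximate factorization, after collecting powers of $2$) produces the term $2\sum_{r\ge1}\frac{k_1k_2}{r}\sum_{(a_i),(b_i)}\free_{a_1+b_1}\cdots\free_{a_r+b_r}$, and the degree-$\le k_1+k_2-2$ remainder is $o(1)$; this is precisely the asserted relation between $\covarianceProba$, $\covarianceDisjoint$ and the $\free$'s.
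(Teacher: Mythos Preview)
Your proposal is correct and follows essentially the same approach as the paper's own proof, which is itself only a pointer to \cite[Theorem and Definition~1, Theorem~3]{Sniady2006} together with the remark that the spin analogues of all ingredients are now available (\cref{theo:afp-for-spin} for the disjoint cumulants, \eqref{eq:projective-normalized-trololo} for the normalization, and the categorical \cref{lem:category,lem:afp-inverse,lem:generators-afp-ok}). You have unpacked exactly this sketch: the generator reductions via \cref{lem:generators-afp-ok}, the passage \ref{item:AFP-gamma-general}$\Leftrightarrow$\ref{item:AFP-gamma-bullet-general} via \cref{theo:afp-for-spin} and \cref{lem:afp-inverse}, the core comparison \ref{item:AFP-partitions-multi}$\Leftrightarrow$\ref{item:AFP-gamma-bullet-general} through the falling-factorial discrepancy (which is precisely the content of \cite[Section~4.7]{Sniady2006}), and the order-two Brillinger identity $\kumu^{\E_\Gamma}_2=\kumu^{\E_{\Gamma_\bullet}}_2-\E_\Gamma\big(\kumu^{\id_\Gamma}_2\big)$ combined with \eqref{eq:second-cumulant} for the covariance formula, tracking the powers of~$2$ exactly as the paper warns one must.
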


\begin{proof}
The proof is fully analogous to \cite[Theorem and Definition 1, Theorem
3]{Sniady2006}; we just have to make sure that all ingredients of the old proof
have their spin counterparts. The list of new ingredients: the information about
disjoint cumulants is provided by \cref{theo:afp-for-spin}, the link between
non-normalized and normalized spin characters is provided by definition
\eqref{eq:projective-normalized-trololo}. The calculation is fully analogous to
the linear case, however one should carefully track the powers of $2$ which come
from two distinct sources: the choice of normalization used in
\eqref{eq:projective-normalized-trololo} as well as \eqref{eq:second-cumulant}.
\end{proof}

\subsection{Proof of \cref{theo:mainLLN}}
\label{sec:proof-of-LLN}

\begin{proof}[Proof of \cref{theo:mainLLN}]
	The proof of a very similar result presented in \cite[Section
5]{DolegaSniady2018} works also in our setup. In fact, we consider the simplest
possible case of the latter result (with the notations of
\cite{DolegaSniady2018} this corresponds to $\alpha=1$, $g=0$). We just have to
make sure that all ingredients of the proof have their spin counterparts and
indeed \cref{theo:key-tool} provides the necessary tools.

Alternatively, it was observed by De Stavola \cite[Section 4.5]{DeStavolaThesis}
that central limit theorem (a la \cref{theo:mainCLT}) for shifted Young diagrams implies also law of large numbers.
The proof of De Stavola can be adapted easily to our context.
\end{proof}

\subsection{Proof of \cref{theo:mainCLT} }
\label{sec:proof-theomainCLT}

\begin{proof}[Proof of \cref{theo:mainCLT}] 
This proof is fully analogous to the proof of \cite[Corollary 4]{Sniady2006}: in
order to show convergence to the Gaussian distribution it is enough to check
that the higher cumulants converge to zero.
\end{proof}

\section{Applications and examples}
\label{sec:applications}

\subsection{Example: shifted Plancherel measure}
\label{sec:plancherel}

We continue the discussion of the spin part of the left-regular representation
from \cref{example:Plancherel}. \cref{theo:mainLLN} is applicable in this
context with
\begin{align*} 
      \free_{k} &= \begin{cases}
					1 & \text{if $k=2$}, \\
					0 & \text{if $k\neq 2$},
					\end{cases} \\[1ex]
	 \covarianceDisjoint_{k,l} &= 0
\end{align*}				
for any even integers $k,l\geq 2$.

The limit shape $\Omega$ with the corresponding sequence of free cumulants given
by \eqref{eq:our-beloved-free-cumulants} is uniquely determined to the
Logan--Shepp \& Vershik--Kerov curve, see \cite[example on pages
133--134]{Biane1998}. In this way we recover the result of De Stavola \cite[Section 4.5]{DeStavolaThesis}.

\subsection{Example: Schur--Weyl measure}
\label{sec:example-SW}

\subsubsection{Proof of \cref{theo:schur-weyl}}
\label{sec:proof-theo-schur-weyl}

\begin{proof}[Proof of \cref{theo:schur-weyl}]
We consider the vector space
\[ V^{\SWW}_{d,n}= \left( \C^d \oplus \C^d \right)^{\otimes n} \]
which with the action of the spin symmetric group $\Spin{n}$
given by \cite[Section 4.3 combined with (2.5)]{Wan2012}
becomes a (reducible) spin superrepresentation. 

Its decomposition into irreducible components is provided by \emph{Sergeev
	duality} \cite[Theorem 4.8]{Wan2012}; in particular it follows that the
corresponding probability measure $\PP^{V^{\SWW}_{d,n}} $ on $\SP_n$ coincides
with the measure $\SW{n}{d}$ considered in \eqref{eq:SW-measure}.

The character of $V^{\SWW}_{d,n}$ is given by
\[ \chi^{\SWW}_{d,n}(\pi) = \frac{1}{(\sqrt{2}\ d)^{\| \pi \|}} \]
for any $\pi\in \OP_n$. In the same way as in \cref{example:Plancherel} we
calculate the corresponding cumulants; it follows that
\[ \kumu_\ell(\pi_1,\dots,\pi_\ell)= 0 \]
for all $\ell\geq 2$.  

We set $d=d_n$; it follows that \cref{theo:mainLLN} is applicable
with 
\begin{align*}
\free_{k+1} &= \left( \frac{c}{\sqrt{2}} \right)^{k-1}, \\ 
\covarianceDisjoint_{k_1+1,k_2+1} &= 0
\end{align*}
for any odd integers $k,k_1,k_2\geq 1$.

The explicit form of the limit shape $\Omega^{\SWW}_c$ will be found in
\cref{sec:SW-measure-shape}.
\end{proof}

\subsubsection{Proof of \cref{coro:schur-weyl-recording}}

\begin{proof}[Proof of \cref{coro:schur-weyl-recording}]

We are interested in the rescaled diagram $\omega_{\frac{1}{d_n} \xi}$, where $\xi:=
\big( Q(\mathbf{w}) \big)_{\leq
	\alpha d_n^2 }$ is the level curve of the recording tableau.
The distribution of the random shifted Young diagram $\xi$
 is given by Schur--Weyl measure $\SW{\lfloor \alpha
	d_n^2\rfloor}{d}$. It follows that \cref{theo:schur-weyl} is applicable with 
\[ c:= \lim_{n\to\infty} \frac{\sqrt{\lfloor \alpha
	d_n^2\rfloor}}{d_n}  = \sqrt{\alpha}
\]
and that the convergence 
\[ \omega_{ \frac{1}{\sqrt{2 \lfloor \alpha
		d_n^2\rfloor}} \xi} \longrightarrow \Omega^{\SWW}_{\sqrt{\alpha}} \]
in the usual sense holds true.

By comparing the normalization factors it follows that $\omega_{\frac{1}{d_n}
	\xi}$ is a dilation of $\omega_{ \frac{1}{\sqrt{2 \lfloor \alpha d_n^2\rfloor}}
	\xi}$ by the factor $\frac{\sqrt{2 \lfloor \alpha d_n^2\rfloor}}{d_n}\to \sqrt{2
	\alpha}$. It follows that the theorem holds for $\Omega^{\LimitQ}_{\alpha}$
equal to the dilation of $\Omega^{\SWW}_{\sqrt{\alpha}}$ by the factor $\sqrt{2
	\alpha}$:
\[ \Omega^{\LimitQ}_{\alpha}(z) = 
   \sqrt{2 \alpha}\ \Omega^{\SWW}_{\sqrt{\alpha}}\left( \frac{z}{\sqrt{2 \alpha}}\right).\]

The limit shape $\LimitQ$ can be then recovered from the family of curves
$\Omega^{\LimitQ}_{\alpha}$ by setting
\[ \LimitQ(x,y) = \sup\{ \alpha\geq 0 :
\Omega^{\LimitQ}_{\alpha}(x-y) \leq x+y \}.\]

\end{proof}

\subsubsection{Proof of \cref{coro:schur-weyl-insertion}}

\begin{proof}[Proof of \cref{coro:schur-weyl-insertion}]

The level curve $\xi=\big( P(\mathbf{w}) \big)_{\leq \alpha \frac{n}{d} }$ can
be obtained by the following two-step procedure: firstly, we remove from the
random word $\mathbf{w}$ all entries which are bigger than the threshold
$\alpha \frac{n}{d}$ and denote the resulting word by $\mathbf{w}'$; then we consider
the common shape of the tableaux associated by shifted RSK correspondence to
$\mathbf{w}'$.

We denote by $n'$ the random length of the word $\mathbf{w}'$. Since the
distribution of $n'$ is $\operatorname{Bin}(n,\alpha \frac{n}{d^2})$ it follows
that
\[ \frac{n'}{\alpha \frac{n^2}{d^2}}\longrightarrow 1 \]
holds true in probability.

The limit shape of $ \omega_{\frac{1}{\sqrt{2 n'}}\xi} $
is provided by \cref{theo:schur-weyl} with
\[ c = \lim_{n\to\infty} \frac{\sqrt{n'}}{ \lfloor \alpha \frac{n}{d}\rfloor } =\frac{1}{\sqrt{\alpha}}; \]
thus 
\[ \omega_{\frac{1}{\sqrt{2 n'}}\xi}  \longrightarrow \Omega^{\SWW}_{\frac{1}{\sqrt{\alpha}}}.\]

By comparison of the normalization factors it follows that $\omega_{\frac{d}{n}
	\xi}$ is a dilation of $\omega_{\frac{1}{\sqrt{2 n'}}\xi}$ by the factor
$\frac{d \sqrt{2 n'}}{n}\to \sqrt{2\alpha} $. It follows that the theorem holds
for $\Omega^{\LimitP}_{\alpha}$ equal to the dilation of
$\Omega^{\SWW}_{\frac{1}{\sqrt{\alpha}}}$ by the factor $\sqrt{2 \alpha}$:
\[ \Omega^{\LimitP}_{\alpha}(z) = 
  \sqrt{2 \alpha}\ \Omega^{\SWW}_{\frac{1}{\sqrt{\alpha}}}\left( \frac{z}{\sqrt{2 \alpha}}\right).\]
The limit shape $\LimitP$ can be then recovered from the family of curves
$\Omega^{\LimitP}_{\alpha}$ by setting
\[ \LimitP(x,y) = \sup\{ \alpha\geq 0 :
\Omega^{\LimitP}_{\alpha}(x-y) \leq x+y \}.\]

\end{proof}

\subsection{Random shifted tableaux with prescribed shape.  Proof of
	\cref{theo:randomSYT}.} 

\label{sec:proof-randomSYT}

The current section is devoted to the proof of \cref{theo:randomSYT}. For
Reader's convenience the proof is split into several subsections. Note that some
of the intermediate results might be interesting by themselves.

\subsubsection{Random shifted tableaux and the spin representation theory}

We start with the link between random \emph{shifted} standard tableaux and the
\emph{spin} representation theory of the symmetric groups. This link
(\cref{prop:Bratteli-equal}) is fully analogous to the classical link between
\emph{non-shifted} standard tableaux and the \emph{linear} representation theory
of the symmetric groups. Unfortunately, the proof of \cref{prop:Bratteli-equal}
is more complex than its classical counterpart because not all edges in the
Bratteli diagram are simple. We present the details below.

\subsubsection{Random shifted tableaux} 
\label{sec:random-young-tableaux} 

For a given shifted standard tableau $T$ with shape $\mu\in\SP_n$ and an integer
$0\leq m\leq n$ we denote by $T_{\leq m}$ the set of boxes of $T$ which are
smaller or equal than $m$. This set of boxes corresponds to some $\xi\in\SP_m$.
Thus $T_{\leq m}:=\xi$ can be regarded as a shifted partition.

We fix $\mu\in\SP_n$ and consider a uniformly random tableau
$T\in \tableaux_\mu$ with this prescribed shape.
\emph{We are interested in the probability distribution of 
the random strict partition $T_{\leq m}$.}

\subsubsection{Restriction of irreducible representations}
\label{sec:restriction} 

We fix $\mu\in\SP_n$ and consider the corresponding
irreducible representation $V^\mu$ of $\SGA{n}$. 
For an integer $0\leq m\leq n$ the restriction
\begin{equation}
\label{eq:restriction}
V^\mu\left\downarrow^{\SGA{n}}_{\SGA{m}}\right.
\end{equation}
is  a representation of the spin group algebra $\SGA{m}$.
\emph{We are interested in the corresponding probability measure
\[ \PP^{V^\mu \left\downarrow^{\SGA{n}}_{\SGA{m}}\right.}\]
on the set $\SP_m$ which gives the probability distribution
of a random irreducible component of \eqref{eq:restriction}.}

\subsubsection{The link}

The following result shows that the two probability measures on $\SP_m$ which we
considered in \cref{sec:random-young-tableaux,sec:restriction} are
equal.

\begin{proposition}
\label{prop:Bratteli-equal}
Let $\mu\in\SP_n$ and $0\leq m\leq n$. 
Then for any $\xi\in\SP_m$
\begin{equation} 
\label{eq:restriction-random-tableau}
\PP_{\tableaux_\mu}( T \in\tableaux_\mu :
T_{\leq m} = \xi) =
\PP^{V^\mu \left\downarrow^{\SGA{n}}_{\SGA{m}}\right.}(\xi).
 \end{equation}
\end{proposition}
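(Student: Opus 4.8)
The plan is to prove \cref{prop:Bratteli-equal} by induction on the difference $n-m$, reducing at each step to the case $m = n-1$ (the ``one-box'' case), where the combinatorial statement about tableaux and the representation-theoretic statement about restriction can be matched directly on the level of branching rules.

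For the base case $n - m = 1$, I would argue as follows. On the combinatorial side, a uniformly random $T \in \tableaux_\mu$ has $T_{\leq n-1}$ equal to the shape $\xi$ obtained by deleting a single box from $\mu$; the probability that a given such $\xi$ occurs is $g^\xi / g^\mu$, the ratio of the numbers of shifted standard tableaux, since filling $T$ amounts to choosing a standard filling of $\xi$ and then recording where the box $\mu/\xi$ sits. On the representation-theoretic side, one uses the branching rule for the restriction $V^\mu \!\downarrow^{\SGA{n}}_{\SGA{n-1}}$ in the spin setting (this is the point where one must be careful: unlike the linear case, the restriction multiplicities are not all $1$ — an edge in the spin Bratteli diagram can have multiplicity $2$, depending on the parity types of $\mu$ and $\xi$, i.e.\ on whether $\mu, \xi \in \SP^+$ or $\SP^-$ and on how $\ell(\mu)$ and $\ell(\xi)$ compare). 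The key identity to verify is that, for every $\xi \in \SP_{n-1}$ obtained from $\mu$ by removing a box,
\[
\frac{n_\xi \, \dimm V^\xi}{\dimm V^\mu} = \frac{g^\xi}{g^\mu},
\]
where $n_\xi$ is the branching multiplicity. Since $g^\mu = \sum_{\xi} n_\xi\, g^\xi$ (the shifted analogue of summing over the last box, which is exactly the branching rule read off on dimensions, using $\dimm V^\mu = 2^{\lfloor (n-\ell(\mu))/2 \rfloor} g^\mu$ or the appropriate normalization from \cref{appendix:conclusion}), and since the powers of $2$ coming from $\dimm V^\xi / \dimm V^\mu$ conspire with the multiplicity $n_\xi$ so that $n_\xi \dimm V^\xi / \dimm V^\mu$ telescopes to $g^\xi/g^\mu$, the base case follows. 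This bookkeeping of powers of $2$ against branching multiplicities is the main obstacle and the reason the proof is, as the authors warn, more delicate than its linear counterpart.

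For the inductive step, I would use transitivity of both constructions. On the one hand, $T_{\leq m}$ can be computed in two stages: first pass to $T_{\leq m+1}$, then remove one more box; conditionally on $T_{\leq m+1} = \zeta$, the tableau $T$ restricted to boxes $\leq m+1$ is itself uniformly distributed on $\tableaux_\zeta$ (a standard fact about uniform standard tableaux, which holds verbatim for shifted tableaux). On the other hand, restriction of representations is transitive: $V^\mu \!\downarrow_{\SGA{m}} = \bigl( V^\mu \!\downarrow_{\SGA{m+1}} \bigr)\!\downarrow_{\SGA{m}}$, and the measure $\PP^{W \downarrow}$ associated to a reducible $W$ is the average of $\PP^{V^\zeta \downarrow}$ weighted by $\PP^W(\zeta)$ — this is immediate from the definition \eqref{eq:probability-and-dimensions} together with additivity of dimension and multiplicity under direct sums. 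Combining the inductive hypothesis (applied from level $m+1$ down to $m$, i.e.\ the already-established case, and from $n$ down to $m+1$) with these two compatibility statements gives \eqref{eq:restriction-random-tableau} at level $m$.

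I expect essentially no difficulty in the structural/inductive part; the entire weight of the proof rests on the one-box branching identity above, and specifically on correctly accounting for the two sources of powers of $2$ (the $2^{\lfloor\cdot\rfloor}$ in the dimension formula $\dimm V^\xi$ versus $g^\xi$, and the multiplicity-$2$ edges in the Bratteli diagram) so that they cancel. I would handle this by invoking the explicit spin branching rule as recorded in \cref{sec:projective-representations} / \cref{appendix:conclusion}, splitting into the cases according to the parity type of $\mu$, and checking the identity in each case.
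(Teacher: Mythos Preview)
Your inductive approach is correct and genuinely different from the paper's proof. The paper works globally rather than locally: it passes to the superrepresentation setting, writes $\dim V^\mu$ as a weighted sum over \emph{all} paths $\emptyset=\mu^0\nearrow\cdots\nearrow\mu^n=\mu$ in the Bratteli diagram, and then observes the key combinatorial fact that the product of edge multiplicities along any such path equals $2^{c_\mu}$ where $c_\mu$ depends only on the endpoint $\mu$ (not on the path). This gives $\dim V^\mu = 2^{c_\mu}\,|\tableaux_\mu|$ directly, and the same constant $2^{c_\mu}$ appears in the numerator $n_\xi \dim V^\xi$ (paths from $\emptyset$ to $\mu$ through $\xi$), so the ratio is immediately a ratio of tableau counts. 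No induction is needed.

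Your route --- reduce to the one-box case and induct via transitivity --- also works, and the power-of-$2$ bookkeeping you flag as the main obstacle is exactly the local shadow of the paper's global observation: one checks that $m(\xi,\mu)\cdot 2^{c_\xi-c_\mu}=1$ for each edge $\xi\nearrow\mu$, which is a short case analysis on the parity of $\|\mu\|$ and whether the removed box kills a row. One small slip in your write-up: the identity $g^\mu=\sum_\xi n_\xi\,g^\xi$ is not correct as stated --- the tableau recursion is $g^\mu=\sum_{\xi\nearrow\mu} g^\xi$ with all coefficients $1$, while the dimension recursion is $\dim V^\mu=\sum_\xi n_\xi \dim V^\xi$; these only match \emph{after} the power-of-$2$ cancellation, which is precisely what has to be checked. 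The paper's global argument has the virtue of doing this check once and for all; your local argument is more modular and closer in spirit to the classical (linear) proof.
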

\begin{proof}
We consider the Bratteli diagram for the sequence of superalgebras
$\SGA{0}\subseteq \SGA{1} \subseteq \cdots$. The vertices of this graph
correspond to irreducible superrepresentations of $\SGA{n}$ over $n\geq 0$ and
hence can be identified with the strict partitions. This kind of graph (with
different multiplicities of the edges) is known in the literature under the name
of \emph{Schur graph} \cite{Borodin1997,Ivanov2004}.

We connect a superrepresentation $\xi$ of $\SGA{n-1}$ by an oriented edge with a
superrepresentation $\zeta$ of $\SGA{n}$ if $\xi$ appears in the decomposition
of the restriction $\zeta\big\downarrow^{\SGA{n}}_{\SGA{n-1}}$ into irreducible
components. We declare the multiplicity of the edge to be equal to the
multiplicity of $\xi$ in $\zeta\big\downarrow^{\SGA{n}}_{\SGA{n-1}}$. In the
following we will describe the structure of this graph based on \cite[Theorem
22.3.4]{Kleshchev2005}.

For strict partitions $\xi,\zeta$ we write $\xi\nearrow\zeta$ if the shifted
Young diagram $\xi$ is obtained from $\zeta$ by removal of a single box. It
turns out that in the Bratteli diagram there is an oriented edge from $\xi$ to
$\zeta$ if and only if $\xi\nearrow\zeta$. If this is the case, the multiplicity
of this edge is given by
\[ m(\xi, \zeta) = 
\begin{cases}
2 & \text{if $\xi\in\SP^+$  and $\zeta\in\SP^-$}, \\
1 & \text{otherwise},
\end{cases}
\]
see \cref{fig-younggraph}.

\begin{figure}
	\begin{tikzpicture}[scale=0.25]
	\tikzset{
		odd/.style={blue,fill=blue!10}
	}
	
	\tikzset{
		even/.style={}
	}
	
	\coordinate (d0) at (-6,0);
	\draw (d0) node {$\emptyset$};

	\coordinate (d1) at (0,0);
	\draw[even] (d1)  
	++(-0.5,-0.5) rectangle +(1,1);

	\coordinate (d2) at (6,0);
	\draw[odd] 
	(d2) ++(-1,-0.5) 
	rectangle +(1,1) 
	{[current point is local] ++(1,0) rectangle +(1,1)};

	\coordinate (d3) at (12,-2.5);
	\draw[even] (d3) 
	++(-1.5,-0.5) 
	rectangle +(1,1) 
	{[current point is local] ++(1,0) rectangle +(1,1)}
	{[current point is local] ++(2,0) rectangle +(1,1)};

	\coordinate (d21) at (12,2.5);
	\draw[odd] (d21) 
	++(-1,-1) 
	rectangle +(1,1) 
	{[current point is local] ++(1,1) rectangle +(1,1)}
	{[current point is local] ++(1,0) rectangle +(1,1)};

	\coordinate (d4) at (20,-2.5);
	\draw[odd] (d4) ++(-2,-0.5) 
	rectangle +(1,1) 
	{[current point is local] ++(1,0) rectangle +(1,1)}
	{[current point is local] ++(2,0) rectangle +(1,1)}
	{[current point is local] ++(3,0) rectangle +(1,1)};

	\coordinate (d31) at (20,2.5);
	\draw[even] (d31) 
	++(-1.5,-1) 
	rectangle +(1,1) 
	{[current point is local] ++(1,1) rectangle +(1,1)}
	{[current point is local] ++(1,0) rectangle +(1,1)}
	{[current point is local] ++(2,0) rectangle +(1,1)};

	\coordinate (d5) at (28,-5);
	\draw[even] (d5) 
	++(-2.5,-0.5) 
	rectangle +(1,1) 
	{[current point is local] ++(1,0) rectangle +(1,1)}
	{[current point is local] ++(2,0) rectangle +(1,1)}
	{[current point is local] ++(3,0) rectangle +(1,1)}
	{[current point is local] ++(4,0) rectangle +(1,1)};
	
	\coordinate (d41) at (28,0);
	\draw[odd] (d41) 
	++(-2,-1) 
	rectangle +(1,1) 
	{[current point is local] ++(1,1) rectangle +(1,1)}
	{[current point is local] ++(1,0) rectangle +(1,1)}
	{[current point is local] ++(2,0) rectangle +(1,1)}
	{[current point is local] ++(3,0) rectangle +(1,1)};
	
	\coordinate (d32) at (28,5);
	\draw[odd] (d32) 
	++(-1.5,-1) 
	rectangle +(1,1) 
	{[current point is local] ++(1,0) rectangle +(1,1)}
	{[current point is local] ++(2,0) rectangle +(1,1)}
	{[current point is local] ++(1,1) rectangle +(1,1)}
	{[current point is local] ++(2,1) rectangle +(1,1)}
	;

	\coordinate (d6) at (36,-7.5);
	\draw[odd] (d6) 
	++(-3,-0.5) 
	rectangle +(1,1) 
	{[current point is local] ++(1,0) rectangle +(1,1)}
	{[current point is local] ++(2,0) rectangle +(1,1)}
	{[current point is local] ++(3,0) rectangle +(1,1)}
	{[current point is local] ++(4,0) rectangle +(1,1)}
	{[current point is local] ++(5,0) rectangle +(1,1)};
	
	\coordinate (d51) at (36,-2.5);
	\draw[even] (d51) 
	++(-2.5,-1) 
	rectangle +(1,1) 
	{[current point is local] ++(1,1) rectangle +(1,1)}
	{[current point is local] ++(1,0) rectangle +(1,1)}
	{[current point is local] ++(2,0) rectangle +(1,1)}
	{[current point is local] ++(3,0) rectangle +(1,1)}
	{[current point is local] ++(4,0) rectangle +(1,1)}
	;
	
	\coordinate (d42) at (36,2.5);
	\draw[even] (d42) ++(-2,-1) 
	rectangle +(1,1) 
	{[current point is local] ++(1,0) rectangle +(1,1)}
	{[current point is local] ++(2,0) rectangle +(1,1)}
	{[current point is local] ++(3,0) rectangle +(1,1)}
	{[current point is local] ++(1,1) rectangle +(1,1)}
	{[current point is local] ++(2,1) rectangle +(1,1)}
	;
	
	\coordinate (d321) at (36,7.5);
	\draw[odd] (d321) ++(-1.5,-1.5) 
	rectangle +(1,1) 
	{[current point is local] ++(1,0) rectangle +(1,1)}
	{[current point is local] ++(2,0) rectangle +(1,1)}
	{[current point is local] ++(1,1) rectangle +(1,1)}
	{[current point is local] ++(2,1) rectangle +(1,1)}
	{[current point is local] ++(2,2) rectangle +(1,1)}
	;

	\draw[->] ($ (d0)!1.5 cm!(d1) $) -- ($ (d1)!1.5 cm!(d0) $);
	
	\draw[->,double] ($ (d1)!1.5 cm!(d2) $) -- ($ (d2) !1.5 cm! (d1)$) ;
	
	\draw[->] ($ (d2)!1.5 cm!(d3) $) -- ($ (d3) !2 cm! (d2)$) ;
	\draw[->] ($ (d2)!1.5 cm!(d21) $) -- ($ (d21) !1.5 cm! (d2)$) ;
	
	\draw[->,double] ($ (d3)!2 cm!(d4) $) -- ($ (d4) !2.5 cm! (d3)$) ;
	\draw[->] ($ (d3)!2 cm!(d31) $) -- ($ (d31) !2.5 cm! (d3)$) ;
	\draw[->] ($ (d21)!1.5 cm!(d31) $) -- ($ (d31) !2.5 cm! (d21)$) ;
	
	\draw[->] ($ (d4)!3 cm!(d5) $) -- ($ (d5) !3 cm! (d4)$) ;
	\draw[->] ($ (d4)!3 cm!(d41) $) -- ($ (d41) !2.5 cm! (d4)$) ;
	\draw[->,double] ($ (d31)!2 cm!(d41) $) -- ($ (d41) !2.5 cm! (d31)$) ;
	\draw[->,double] ($ (d31)!2 cm!(d32) $) -- ($ (d32) !2.5 cm! (d31)$) ;
	
	\draw[->,double] ($ (d5)!3 cm!(d6) $) -- ($ (d6) !3.5 cm! (d5)$) ;
	\draw[->] ($ (d5)!3 cm!(d51) $) -- ($ (d51) !3 cm! (d5)$) ;
	\draw[->] ($ (d41)!2.5 cm!(d51) $) -- ($ (d51) !2.5 cm! (d41)$) ;
	\draw[->] ($ (d41)!2 cm!(d42) $) -- ($ (d42) !2.5 cm! (d41)$) ;
	\draw[->] ($ (d32)!2 cm!(d42) $) -- ($ (d42) !2.5 cm! (d32)$) ;
	\draw[->] ($ (d32)!2 cm!(d321) $) -- ($ (d321) !2.5 cm! (d32)$) ;

	\end{tikzpicture}
	
	\caption{Part of the Bratteli diagram for superrepresentations of spin group
	alegbras $\SGA{n}$. Shaded diagrams correspond to the elements of $\SP^-$
	\emph{[which correspond to the superrepresentations of type $\mathtt{Q}$]}
	while non-shaded diagrams correspond to the elements of $\SP^+$ \emph{[which
		correspond to the superrepresentations of type $\mathtt{M}$]}. Double edges are
	drawn only on the edges from the elements of $\SP^+$ to the elements of
	$\SP^-$; the remaining edges are simple.} \label{fig-younggraph}

	\vspace{5ex}

	\begin{tikzpicture}[scale=0.25]
	\tikzset{
		odd/.style={}
	}
	
	\tikzset{
		even/.style={}
	}
	
	\coordinate (d0) at (-6,0);
	\draw (d0) node {$\emptyset$};

	\coordinate (d1) at (0,0);
	\draw[even] (d1)  
	++(-0.5,-0.5) rectangle +(1,1);

	\coordinate (d2) at (6,0);
	\draw[odd] 
	(d2) ++(-1,-0.5) 
	rectangle +(1,1) 
	{[current point is local] ++(1,0) rectangle +(1,1)};

	\coordinate (d3) at (12,-2.5);
	\draw[even] (d3) 
	++(-1.5,-0.5) 
	rectangle +(1,1) 
	{[current point is local] ++(1,0) rectangle +(1,1)}
	{[current point is local] ++(2,0) rectangle +(1,1)};

	\coordinate (d21) at (12,2.5);
	\draw[odd] (d21) 
	++(-1,-1) 
	rectangle +(1,1) 
	{[current point is local] ++(1,1) rectangle +(1,1)}
	{[current point is local] ++(1,0) rectangle +(1,1)};

	\coordinate (d4) at (20,-2.5);
	\draw[odd] (d4) ++(-2,-0.5) 
	rectangle +(1,1) 
	{[current point is local] ++(1,0) rectangle +(1,1)}
	{[current point is local] ++(2,0) rectangle +(1,1)}
	{[current point is local] ++(3,0) rectangle +(1,1)};

	\coordinate (d31) at (20,2.5);
	\draw[even] (d31) 
	++(-1.5,-1) 
	rectangle +(1,1) 
	{[current point is local] ++(1,1) rectangle +(1,1)}
	{[current point is local] ++(1,0) rectangle +(1,1)}
	{[current point is local] ++(2,0) rectangle +(1,1)};

	\coordinate (d5) at (28,-5);
	\draw[even] (d5) 
	++(-2.5,-0.5) 
	rectangle +(1,1) 
	{[current point is local] ++(1,0) rectangle +(1,1)}
	{[current point is local] ++(2,0) rectangle +(1,1)}
	{[current point is local] ++(3,0) rectangle +(1,1)}
	{[current point is local] ++(4,0) rectangle +(1,1)};
	
	\coordinate (d41) at (28,0);
	\draw[odd] (d41) 
	++(-2,-1) 
	rectangle +(1,1) 
	{[current point is local] ++(1,1) rectangle +(1,1)}
	{[current point is local] ++(1,0) rectangle +(1,1)}
	{[current point is local] ++(2,0) rectangle +(1,1)}
	{[current point is local] ++(3,0) rectangle +(1,1)};
	
	\coordinate (d32) at (28,5);
	\draw[odd] (d32) 
	++(-1.5,-1) 
	rectangle +(1,1) 
	{[current point is local] ++(1,0) rectangle +(1,1)}
	{[current point is local] ++(2,0) rectangle +(1,1)}
	{[current point is local] ++(1,1) rectangle +(1,1)}
	{[current point is local] ++(2,1) rectangle +(1,1)}
	;

	\coordinate (d6) at (36,-7.5);
	\draw[odd] (d6) 
	++(-3,-0.5) 
	rectangle +(1,1) 
	{[current point is local] ++(1,0) rectangle +(1,1)}
	{[current point is local] ++(2,0) rectangle +(1,1)}
	{[current point is local] ++(3,0) rectangle +(1,1)}
	{[current point is local] ++(4,0) rectangle +(1,1)}
	{[current point is local] ++(5,0) rectangle +(1,1)};
	
	\coordinate (d51) at (36,-2.5);
	\draw[even] (d51) 
	++(-2.5,-1) 
	rectangle +(1,1) 
	{[current point is local] ++(1,1) rectangle +(1,1)}
	{[current point is local] ++(1,0) rectangle +(1,1)}
	{[current point is local] ++(2,0) rectangle +(1,1)}
	{[current point is local] ++(3,0) rectangle +(1,1)}
	{[current point is local] ++(4,0) rectangle +(1,1)}
	;
	
	\coordinate (d42) at (36,2.5);
	\draw[even] (d42) ++(-2,-1) 
	rectangle +(1,1) 
	{[current point is local] ++(1,0) rectangle +(1,1)}
	{[current point is local] ++(2,0) rectangle +(1,1)}
	{[current point is local] ++(3,0) rectangle +(1,1)}
	{[current point is local] ++(1,1) rectangle +(1,1)}
	{[current point is local] ++(2,1) rectangle +(1,1)}
	;
	
	\coordinate (d321) at (36,7.5);
	\draw[odd] (d321) ++(-1.5,-1.5) 
	rectangle +(1,1) 
	{[current point is local] ++(1,0) rectangle +(1,1)}
	{[current point is local] ++(2,0) rectangle +(1,1)}
	{[current point is local] ++(1,1) rectangle +(1,1)}
	{[current point is local] ++(2,1) rectangle +(1,1)}
	{[current point is local] ++(2,2) rectangle +(1,1)}
	;

	\draw[->] ($ (d0)!1.5 cm!(d1) $) -- ($ (d1)!1.5 cm!(d0) $);
	
	\draw[->] ($ (d1)!1.5 cm!(d2) $) -- ($ (d2) !1.5 cm! (d1)$) ;
	
	\draw[->] ($ (d2)!1.5 cm!(d3) $) -- ($ (d3) !2 cm! (d2)$) ;
	\draw[->] ($ (d2)!1.5 cm!(d21) $) -- ($ (d21) !1.5 cm! (d2)$) ;
	
	\draw[->] ($ (d3)!2 cm!(d4) $) -- ($ (d4) !2.5 cm! (d3)$) ;
	\draw[->] ($ (d3)!2 cm!(d31) $) -- ($ (d31) !2.5 cm! (d3)$) ;
	\draw[->] ($ (d21)!1.5 cm!(d31) $) -- ($ (d31) !2.5 cm! (d21)$) ;
	
	\draw[->] ($ (d4)!3 cm!(d5) $) -- ($ (d5) !3 cm! (d4)$) ;
	\draw[->] ($ (d4)!3 cm!(d41) $) -- ($ (d41) !2.5 cm! (d4)$) ;
	\draw[->] ($ (d31)!2 cm!(d41) $) -- ($ (d41) !2.5 cm! (d31)$) ;
	\draw[->] ($ (d31)!2 cm!(d32) $) -- ($ (d32) !2.5 cm! (d31)$) ;
	
	\draw[->] ($ (d5)!3 cm!(d6) $) -- ($ (d6) !3.5 cm! (d5)$) ;
	\draw[->] ($ (d5)!3 cm!(d51) $) -- ($ (d51) !3 cm! (d5)$) ;
	\draw[->] ($ (d41)!2.5 cm!(d51) $) -- ($ (d51) !2.5 cm! (d41)$) ;
	\draw[->] ($ (d41)!2 cm!(d42) $) -- ($ (d42) !2.5 cm! (d41)$) ;
	\draw[->] ($ (d32)!2 cm!(d42) $) -- ($ (d42) !2.5 cm! (d32)$) ;
	\draw[->] ($ (d32)!2 cm!(d321) $) -- ($ (d321) !2.5 cm! (d32)$) ;

	\end{tikzpicture}
	
	\caption{Part of the shifted Young graph. This graph is obtained from Bratteli
		diagram (\cref{fig-younggraph}) by removing multiplicities from the edges.}
	\label{fig:shifted-Young}
\end{figure}

	We start by pointing out that the statement of \cref{prop:Bratteli-equal} is
	not very precise in the case when $\mu\in\SP^-$ since in this case there are
	\emph{two} irreducible representations which correspond to $\mu$. However, in
	the light of \cref{sec:only-ratio-matters} this subtlety is irrelevant because
	both representations have equal character ratios on $\OP$. In the following we
	change our setup to \emph{superrepresentations} of the superalgebras $\SGA{n}$
	and $\SGA{m}$; by the same argument this does not change the probability
	distribution on the right-hand side of \eqref{eq:restriction-random-tableau}.
	
	\medskip
	
	Our strategy is to evaluate the denominator and the numerator on the right-hand
side of \eqref{eq:probability-and-dimensions} in the special case when $V=V^\mu
\left\downarrow^{\SGA{n}}_{\SGA{m}}\right.$ is the restriction of the irreducible
\emph{superrepresentation} corresponding to $\mu$.
	
	\medskip
	
	The denominator, the dimension of the irreducible superrepresentation $V^\mu$ 
	\begin{equation}
	\label{eq:Bratteli-sum} 
	\dimm V^\mu = \sum_{\emptyset=\mu^0 \nearrow \cdots \nearrow \mu^n=\mu}
	m(\mu^0,\mu^1) \cdots m(\mu^{n-1},\mu^n) 
	\end{equation}
	is equal to the sum over all paths in the Bratteli diagram which connect the
	trivial one-dimensional representation $\emptyset$ with $\mu$. The weight of
	each path is equal to the product of the multiplicities of all edges. This
	product is a power of $2$ with the exponent $c$ equal to the number of the
	elements in the set
	\begin{equation}
	\label{eq:interesting-set}
	\big\{ 0\leq i \leq n-1 :  
	\text{$\|\mu^{i}\|$ is even and $\|\mu^{i+1}\|$ is odd} \big\}.
	\end{equation}
	Since
	\[ \|\mu^0\|, \dots, \|\mu^n\| \]
	is a weakly increasing sequence of integers which at each step increases by at
	most one, it follows that the cardinality of \eqref{eq:interesting-set} is equal
	to the number of even integers in the interval $\{0,\dots,\|\mu\|-1\}$
	which is equal to
	\[ c= c_\mu= 1+\left\lfloor \frac{\|\mu\|-1}{2} \right\rfloor. \]
	Note that this number does not depend on the choice of the path in the Bratteli
	diagram with a specified endpoint $\mu$.
	In this way we proved that
	\[ \dimm V^\mu = 
	  2^{c_\mu} \cdot \big( \text{number of paths $\emptyset=\mu^0 \nearrow \cdots \nearrow \mu^n=\mu$} \big). 
	\]
	
	The latter formula does not involve the multiplicities of the edges in the
Bratteli diagram; for this reason it is convenient to replace the latter by
\emph{Schur graph}, see \cref{fig:shifted-Young}. 
The shifted Young diagram is obtained from
Bratteli diagram by removing the multiplicities of the edges. Clearly, there is
a bijective correspondence between paths in the shifted Young graph and shifted
standard Young tableaux. In this way we proved that
	\begin{equation}
	\label{eq:dimension-tableaux}
	\dimm V^\mu = 2^{c_\mu} \cdot \left| \tableaux_\mu \right|. 
	\end{equation}

	\medskip
	
	The numerator on the right-hand side of \eqref{eq:probability-and-dimensions},
the product
	\[ n_\xi \dimm V^\xi = \sum_{\substack{\emptyset=\mu^0 \nearrow \cdots \nearrow \mu^n=\mu, \\ \mu^m = \xi}}
	m(\mu^0,\mu^1) \cdots m(\mu^{n-1},\mu^n) \]
	is a sum analogous to the right-hand side of \eqref{eq:Bratteli-sum} over paths
which pass through the vertex $\xi$. A reasoning analogous to the one above
implies that
	\begin{multline*}	n_\xi \dimm V^\xi = \\
	2^{c_\mu} \cdot \big( 
	     \text{number of paths $\emptyset=\mu^0 \nearrow \cdots \nearrow \mu^n=\mu$ such that $\mu^m=\xi$} \big). 
	\end{multline*}
	The paths which contribute to the second factor on the right-hand side are in a
bijective correspondence with the shifted tableaux
	\[ \left\{ T \in\tableaux_\mu : T_{\leq m} = \xi \right\}.\]
	
	\medskip
	
	The latter observation combined with \eqref{eq:dimension-tableaux} concludes
the proof.
\end{proof}

\subsubsection{Irreducible representations have approximate factorization property}
\label{sec:afp-irreducible}

We will show that a sequence of \emph{irreducible representations}
$(V^{\xi^{(k)}})$ has approximate factorization property
(\cref{def:afp-representations}), provided that the strict partitions
$\xi^{(k)}$ converge to some limit shape. It might seem silly to prove this kind
of result since the main purpose of \emph{approximate factorization property} is
to serve as the assumption of \cref{theo:mainLLN,theo:mainCLT} which are
trivially satisfied for irreducible representations. The key point is that
\cref{theo:key-tool} will allow us to produce new examples (often more
interesting ones) of the sequences with approximate factorization property from
old (and boring) examples.

\bigskip

Clearly, \cref{def:afp-representations} still makes sense and
\cref{theo:key-tool} remains true if we replace the sequence $(V^{(n)})$ of
representations by its subsequence $(V^{(n_k)})$ for some sequence $(n_k)$ of
positive integers which tends to infinity; in the following we will use
\cref{theo:key-tool} in this formulation.

\begin{proposition}
We keep the notations and the assumptions from \cref{theo:randomSYT}. We define
$V^{(n_k)}:=V^{\xi^{(k)}}$ to be the irreducible representation of $\SGA{n_k}$.
Then the sequence $(V^{\xi^{(k)}})$ has the approximate factorization property.
\end{proposition}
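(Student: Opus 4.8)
The plan is to invoke \cref{theo:key-tool} --- which, as noted just above, applies verbatim to the subsequence $\bigl(V^{(n_k)}\bigr)$ indexed by $k$ --- and to verify its condition~\ref{item:AFP-gamma-special}. Since the equivalence of conditions~\ref{item:AFP-gamma-special} and~\ref{item:AFP-partitions-one} in \cref{theo:key-tool} turns the latter into precisely the decay \eqref{eq:decay-cumualts-characters}, i.e.\ into \cref{def:afp-representations}, this will finish the proof. The crucial feature of the situation is that $V^{(n_k)}=V^{\xi^{(k)}}$ is \emph{irreducible}, so that the measure $\PP^{V^{\xi^{(k)}}}$ on $\SP_{n_k}$ from \eqref{eq:probability-and-dimensions} is the Dirac mass at $\xi^{(k)}$; consequently the map $\E_\Gamma\colon\Gamma\to\Sequences$ of \cref{sec:cumulants-probabilistic} is nothing but the sequence of point evaluations $X\mapsto\bigl(X(\xi^{(k)})\bigr)_k$. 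Because the multiplication of $\Gamma$ is the pointwise product of functions on $\SP$, each such evaluation, and hence $\E_\Gamma$ itself, is a homomorphism of unital commutative algebras. (The familiar ambiguity of $V^\mu$ for $\mu\in\SP^-$ is harmless: it does not affect the character ratios on $\OP$, cf.~\cref{sec:only-ratio-matters}, hence not the measure on $\SP_{n_k}$ either.)

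Granting this, the argument already used in \cref{example:Plancherel} shows that all higher cumulants of a homomorphism vanish, $\kumu^{\E_\Gamma}_\ell\equiv 0$ for $\ell\ge 2$, so the estimate required by condition~\ref{item:AFP-gamma-special} is automatic for $\ell\ge 2$; only the degree bound $\DEGREE_\Sequences\E_\Gamma(x)\le\DEGREE_\Gamma x$ (the case $\ell=1$) remains, and it suffices to check it on a set of filtered-algebra generators of $\Gamma$. I would take the generators $S^{\spin}_{2j}$, $j\ge 1$, furnished by \cref{prop:SR-generate}, which have degree $2j$; the task then reduces to the single estimate $S^{\spin}_{2j}(\xi^{(k)})=O\!\bigl(n_k^{\,j}\bigr)$. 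This is the one place where the hypotheses of \cref{theo:randomSYT} --- the $C$-balancedness in particular --- are actually used: the measure $\sigma_{\omega_{\xi^{(k)}}}$ has total mass $2n_k$ and is supported in an interval of radius $\xi^{(k)}_1+O(1)=O(\sqrt{n_k})$, so the moment formula \eqref{eq:s-as-moment} gives $\bigl|S^{\spin}_{2j}(\xi^{(k)})\bigr|\le (2j-1)\,\bigl(C\sqrt{n_k}+O(1)\bigr)^{2j-2}\cdot 2n_k=O\!\bigl(n_k^{\,j}\bigr)$, as needed.

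The only non-formal ingredient is thus this last boundedness estimate, namely that evaluation at a $C$-balanced strict partition does not raise filtration degrees; it is elementary but must be carried out with the correct normalizations of $\sigma_{\omega_\xi}$ and of $S^{\spin}_n$ (this is also where any stray powers of $2$ are easiest to lose). I would deliberately run the argument through $\E_\Gamma$ (condition~\ref{item:AFP-gamma-special}) rather than through $\E_{\Gamma_\bullet}$ (condition~\ref{item:AFP-gamma-bullet-special}), precisely because $\E_\Gamma$ is a genuine algebra homomorphism here while $\E_{\Gamma_\bullet}$ --- the disjoint product not being the pointwise product --- is not; all the remaining conditions of \cref{theo:key-tool}, including the representation-theoretic \cref{def:afp-representations}, then come for free from the equivalences established in that theorem, and no further work with the various flavours of cumulants is required.
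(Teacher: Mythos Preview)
Your proposal is correct and follows essentially the same route as the paper: both verify condition~\ref{item:AFP-gamma-special} of \cref{theo:key-tool} by observing that the Dirac measure at $\xi^{(k)}$ makes $\E_\Gamma$ a pointwise evaluation (hence higher cumulants vanish), and both handle $\ell=1$ via the generators $S^{\spin}_{2j}$ from \cref{prop:SR-generate} together with $C$-balancedness. The only cosmetic difference is that the paper phrases the vanishing of higher cumulants as ``cumulants of deterministic random variables vanish'' and additionally notes that the limits $\lim_k n_k^{-j}S^{\spin}_{2j}(\xi^{(k)})=S_{2j}(\Omega_1)$ actually exist (using the profile convergence, not just balancedness), which is slightly more than the $O(1)$ bound you establish but is not needed for the bare statement of the proposition.
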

\begin{proof}
The corresponding probability measure $\PP^{V^{(n_k)}}$ is the point measure
supported in $\xi^{(k)}$. With the notations of
\cref{sec:cumulants-probabilistic} the corresponding random strict partition is
deterministic, almost surely equal to $\xi^{(k)}$. All higher cumulants
$\kumu_l$ (with $l\geq 2$) of deterministic random variables are equal to zero;
it follows that for $l\geq 2$ condition \ref{item:AFP-gamma-special} of
\cref{theo:key-tool} is trivially fulfilled for any choice of $X\subseteq \Gamma$.

Take $X=\{ S^{\spin}_2,S^{\spin}_4,\dots \}\subseteq\Gamma$; our plan is to
verify that condition \ref{item:AFP-gamma-special} of \cref{theo:key-tool} is
fulfilled also for $l=1$. Firstly, note that by \cref{prop:SR-generate} the set
$X$ indeed generates $\Gamma$ as a filtered algebra. The first cumulant
$\kumu_1$ takes a particularly simple form and
\[ \kumu_1^{\E_\Gamma}( S^{\spin}_{2j} )=
\E_\Gamma( S^{\spin}_{2j} ) = 
\big(\E^{(n_k)} S^{\spin}_{2j}\big)_{k \geq 1}  =
\big( S^{\spin}_{2j}(\xi^{(k)}) \big)_{k \geq 1}.
\]
The balancedness condition implies that the limit
\[ \lim_{k\to\infty} \frac{1}{n_k^{j}} S^{\spin}_{2j}(\xi^{(k)})= S_{2j}(\Omega_1)
\]
exists thus condition \ref{item:AFP-gamma-special} of \cref{theo:key-tool} is
indeed fulfilled.
\end{proof}

\subsubsection{Restriction of representations with approximate factorization property}
\label{sec:afp-restriction}

The following reasoning is based on its non-shifted analogue
\cite[Theorem 8]{Sniady2006}.

We fix $0<\alpha<1$.
We define $n'_k:= \lfloor \alpha n_k \rfloor$
and consider a restriction 
\[ W^{(n'_k)}:=
 V^{\xi^{(k)}} \big\downarrow^{\SGA{n_k}}_{\SGA{n'_k}}\]
which is a representation of $\SGA{n'_k}$. We claim that the sequence
$(W^{(n'_k)})$ also has approximate factorization property. Indeed, in order to
verify approximate factorization property (no matter if we consider
$(V^{\xi^{(k)}})$ or the sequence of its restrictions $(W^{(n'_k)})$) we need to
understand the asymptotics of the left-hand side of
\eqref{eq:decay-cumualts-characters}. The only difference that in the context of
$(V^{\xi^{(k)}})$ the right-hand side is
\begin{equation} 
\label{eq:asymptotics-restriction-A}
O\left( n_k^{- \frac{\|\pi^1\|+\cdots+\|\pi^\ell\| + 2(\ell-1) }{2}} \right) 
\end{equation}
while in the context of $(W^{(n'_k)})$) this right-hand side should be understood as
\begin{equation} 
\label{eq:asymptotics-restriction-B}
 O\left( (n'_k)^{- \frac{\|\pi^1\|+\cdots+\|\pi^\ell\| + 2(\ell-1) }{2}} \right).
\end{equation}
Asymptotically, expressions \eqref{eq:asymptotics-restriction-A} and
\eqref{eq:asymptotics-restriction-B} differ by a multiplicative factor which
concludes the proof of our claim.

It follows that \cref{theo:mainLLN} is applicable and $\omega_{\frac{1}{\sqrt{2
			n_k}} T^{(k)}_{\leq \alpha n_k  }}(x)$ indeed converges in probability to a
limit shape $\Omega_\alpha$ which is uniquely determined by the sequence of its
free cumulants
\[ R_k(\Omega_{\alpha}) = \alpha^{k-1} R_k(\Omega_1).\]

The problem of finding the limit curves explicitly might be computationally
challenging, see \cref{sec:recover-shape}. Nevertheless in some cases we might
be lucky to have a concrete final answer, see below.

\subsubsection{The example of Linusson, Potka and Sulzgruber}
\label{sec:example-LPS}

Pittel and Romik \cite{Pittel2007} studied asymptotics of random standard Young
tableaux having a square shape; in other words it was the non-shifted analogue
of the problem studied by Linusson, Potka and Sulzgruber in \cite{Linusson2018}.
The result of Pittel and Romik implies in particular that the (scaled down)
level curves of such random standard Young tableaux converge in probability to a
family of curves defined for $-1< z < 1$ by
\begin{equation}
\label{eq:PR}
\Omega_{\alpha}(z) = \sup \big\{ x+y: 0\leq x,y\leq 1 \text{ and } 
x-y=z \text{ and } L(x,y)\leq \alpha \big\} 
\end{equation}
for some explicit function $L$ which describes the limit shape of the tableau.

On the other hand, the results of Biane \cite[Theorem 1.5.1]{Biane1998} also
imply convergence of rescaled level curves to some family of curves
$\Omega_\alpha$ which are defined via the free cumulants:
\begin{equation} 
\label{eq:biane-compression}
R_k(\Omega_{\alpha}) = \alpha^{k-1} R_k(\Omega_1),
\end{equation}
where $\Omega_1$ describes the profile of the square Young diagram with a unit
area.

By combining the results of Pittel and Romik with these of Biane it follows that
both limits are equal. In particular, the curves \eqref{eq:PR} have known free
cumulants \eqref{eq:biane-compression}.

The problem studied by Linusson, Potka and Sulzgruber \cite{Linusson2018} of
asymptotics of random shifted staircase tableaux fits into the framework of
\cref{theo:randomSYT} with the limit shape $\Omega_1$ the same as the limit
shape in the problem of Pitman and Romik. Furthermore, in the proof of
\cref{theo:randomSYT} we showed that the level curves $\Omega_\alpha$ fulfil the
same equation  \eqref{eq:biane-compression}. It follows that the level curves
for the asymptotics of shifted staircase tableaux are the same as the limit
level curves for square Young tableaux which was already observed in \cite{Linusson2018}.

\section{How to recover the limit shape?}

\label{sec:recover-shape}

The results of the current paper (such as \cref{theo:mainLLN}) describe the
limit shape of one or another class of large Young diagrams (this limit is given
by some continual Young diagram $\omega\colon\R\to\R_+$) in terms of the
sequence of its free cumulants $R_2(\omega),R_3(\omega),\dots$. For Readers
interested in concrete applications such a description might be not sufficient
and they would prefer to recover the limit shape $\omega$ itself. This problem
has a well-known answer; for Reader's convenience we shall review it briefly.

\subsection{Free cumulants, Cauchy transform, free probability}

In \cref{sec:free-cumulants} we defined free cumulants $R_2,R_3,\dots$ for a
continual Young diagram $\omega$ quite directly via the functionals
$S_2,S_3,\dots$, cf.~\eqref{eq:s-as-moment}, which have a direct geometric
interpretation in terms of the shape of $\omega$. The advantage of this approach
lies in its simplicity. 

Also, as we already discussed in the proof of \cref{theo:mainLLN}, by inverting
the relationship between $(R_n)$ and $(S_n)$, cf.~\eqref{eq:s-in-r}, we see that
the sequence of free cumulants determines uniquely the sequence of moments of
the measure $\sigma_\omega$ considered in \eqref{eq:sigma-measure}. Under some
mild assumptions \emph{Hamburger moment problem} \cite{akhiezer1965classical}
has a unique solution which shows that the measure $\sigma_{\omega}$ is uniquely
determined. In the case when such an existential result is not enough and one
would like to find the measure $\sigma_{\omega}$ explicitly, we will need some
tools of the complex analysis. For this reason we need to recall the equivalent,
original definition of free cumulants for Young diagrams. Originally, such free
cumulants for Young diagrams were defined via a two-step process
\cite{Biane1998} which we recall in the following.

\subsubsection{The first step.}
Kerov \cite[Section 7]{Kerov1998} associated to a given continual Young diagram
$\omega$ two measures on the real line.
The first one, called \emph{Rayleigh measure}, is a signed measure 
\[ \tau=\tau_\omega = \frac{1}{2} \omega'' \]
given by the second derivative of $\omega$ in the distributional sense. The
second one, called \emph{transition measure}, is the unique probability measure
$\mu=\mu_\omega$ for which the corresponding Cauchy--Stieltjes transform is
given by
\begin{equation} 
\label{eq:Cauchy-Stieltjes}
G(z) = G_\mu(z)= \int_{\R} \frac{1}{z-u}\ \mu(\dif u) =
\exp \int_{\R} \log \frac{1}{z-u}\ \tau(\dif u)  
\end{equation}
for $z\in \C\setminus \R$.
In our setup the transition measure is compactly supported.

\subsubsection{The second step.} 

Free cumulants originate in the context of the random matrix theory and
Voiculescu's free probability theory, see  \cite{Mingo2017} for an overview. For
a compactly supported probability measure $\mu$ the corresponding $R$-transform
is an analytic function $R=R_\mu$ on a neighbourhood of $0$ on the complex plane
which fulfils the  equation
\begin{equation} 
\label{eq:cauchy-and-r}
G\left[ R(z)+ \frac{1}{z} \right]= z
\end{equation}
for $z\in\C$ in some neighbourhood of zero. In the specific setup which we
consider in the current paper when $\mu=\mu_\omega$ is the transition measure of
a continual Young diagram
\[ R(z) = \sum_{n\geq 2} R_n z^{n-1} \]
turns out to be the generating function of the sequence of the free cumulants
$R_2,R_3,\dots$ presented in \cref{sec:free-cumulants}; the proof and additional
context can be found in see \cite[Section 3]{DolegaFeraySniady2008}.

\subsection{How to recover the limit shape?}

Equality \eqref{eq:Cauchy-Stieltjes} implies that
\[ (-1) \frac{\dif}{\dif z} \log G(z) = 
\int_\R \frac{1}{z-u} \tau(\dif u);
\]
in other words the left-hand side coincides with the Cauchy--Stieltjes transform
of the signed measure $\tau$. The measure $\tau$ can be recovered from its
Cauchy--Stieltjes tranform by Stieltjes inversion formula \cite[Section
3.1]{Mingo2017} which in our context takes the form
\begin{multline*} \tau[(a,b)] + \frac{\tau(\{a,b\})}{2} =
\frac{1}{\pi}
\lim_{\epsilon\to 0^+} 
\int_a^b \Im \frac{\dif}{\dif z} \log G(z+i \epsilon) = \\ 
\frac{1}{\pi}
\lim_{\epsilon\to 0^+}  \left[ \Im \log G(b+i \epsilon) - \Im 
\log G(a+i \epsilon)\right]=\\
\frac{1}{\pi}
\lim_{\epsilon\to 0^+}  \left[ \arg{ G(b+i \epsilon)} - \arg{ 
	G(a+i \epsilon)} \right] \in [0,1]
\end{multline*}
for any real numbers $a<b$. Note that the left-hand side is equal to
\[ \frac{\omega'(b)-\omega'(a)}{2} \] 
for all $a<b$ for which the derivative of $\omega$ is well-defined.

Thus by considering the limit $a\to-\infty$ we get a
formula for the cumulative distribution function of $\tau$:
\[ \frac{\omega'(z)+1}{2} = \tau[(-\infty,z)] = 
1+ \frac{1}{\pi}
\lim_{\epsilon\to 0^+}  \arg{ G(z+i \epsilon) }.
\]
which holds true for all $z\in\R$ for which $\omega'(z)$ is well-defined.
It follows that the shape of the continual Young diagram is given
by
\begin{multline}
\label{eq:omega-concrete}
 \omega(x) = -x + 2 \int_{-\infty}^x 
\left[ 1+ \frac{1}{\pi}
\lim_{\epsilon\to 0^+}  \arg{ G(z+i \epsilon) }  \right] \dif z = \\
x - \frac{2}{\pi} \int_{x}^{\infty} 
\lim_{\epsilon\to 0^+}  \arg{ G(z+i \epsilon) }   \dif z.
\end{multline}

To summarize: in order to find the continual diagram $\omega$ it is enough to
know its Cauchy transform $G$. For some classical concrete examples of finding
the Cauchy transform when the sequence of free cumulants is known we refer to
\cite[Section 3.1]{Mingo2017}. In the following we will analyse in detail one
concrete example.

\subsection{Example: Schur--Weyl measure for $c=1$}
\label{sec:SW-measure-shape}

We will calculate explicitly the Schur--Weyl limit curve $\Omega^{\SWW}_{c}$
(cf.~\cref{sec:asymptotic-SW}) in the special case $c=1$. We look for a
continual Young diagram with the sequence of free cumulants given by
\[ R_k= \begin{cases}
\left(\frac{1}{\sqrt{2}} \right)^{k-2} & \text{if $k$ is even},\\
0                        & \text{otherwise}. 
\end{cases}\]
This sequence of free cumulants was considered by Deya and Nourdin \cite[Section
2.5]{Deya2012}. With our notations the $R$-transform is given by
\[ R(z)= 
\frac{2z}{2-z^2}; \]
note that \cite{Deya2012} use a definition of $R$-transform which differs by a
factor of $z$. Equation \eqref{eq:cauchy-and-r} shows therefore that $y=G(z)$ is
a solution to the cubic equation
\[ z y^3 +y^2 - 2 z y+2= 0.\]
For $z\neq 0$ this equation has three solutions given by Cardan's formulae. Any
Cauchy transform $G(z)$ fulfils some known properties (such as asymptotics for
$z\to\infty$); by checking them Deya and Nourdin are able to pinpoint the right
solution of the cubic equation which gives $G(z)$. Thus we have a closed formula
for $G(z)$. Regretfully, computation of the argument $\arg G(z)$, even if $z$
converges to the real line, is a computational challenge; in particular we did
not attempt to find a closed formula for the integrals appearing in
\eqref{eq:omega-concrete}. Even though we are not able to evaluate these
integrals they are analytic functions. It follows that the limit curve
$\Omega^{\SWW}_{c}$  is piecewise analytic.

 On the bright side, these integrals are easily
accessible for a numerical integration and the limit curve $\Omega^{\SWW}_{c}$ 
can be efficiently calculated, see \cref{fig:SW-theoretic}.

\subsection{Schur-Weyl measures, the generic case}

The above discussion of the special case $c=1$ is applicable also in the generic
case. Note, however, that for some values of $c$ the behavior of the Cauchy
transform and its singularities might be different.

For example, in the case $c=2$ shown on \cref{fig:SW-theoretic-2} the support of
the Rayleigh measure is a union of two intervals (located symmetrically with
respect to $0$) and of the point $\{0\}$; in particular it follows that
$\Omega^{\SWW}_{2}(x)=|x|+\frac{\sqrt{2}}{2}$ for $x$ in a small neighbourhood
of $0$.

Since this topic is vast and the current paper is already lengthy we postpone
more details to a forthcoming publication.

\appendix

\section{Preliminaries on spin representation theory}
\label{sec:projective-representations}

We reviewing the rudiments of the spin representation theory of the symmetric
groups. For more details and bibliographic references we refer to
\cite{Stembridge1989,Wan2012,Kleshchev2005,Ivanov2004}.

\subsection{Linear and projective representations}

\subsubsection{Linear representations} 

Recall that a \emph{linear}
representation of a finite group $G$ is a group homomorphism $\irrepSp \colon G
\to \GL(V)$ to the group of \emph{linear} transformations $\GL(V)$ of some
finite-dimensional complex vector space~$V$.

\subsubsection{Projective representations}

A \emph{projective} representation of a finite group $G$ 
is a group homomorphism $\irrepSp \colon G \to \PGL(V)$
to the group of \emph{projective linear} transformations 
$\PGL(V)=\GL(V)/\C^{\times} $ of the projective space $P(V)$
for some finite-dimensional complex vector space $V$.
Equivalently, a projective representation can be viewed as a
map $\phi\colon G \to \GL(V)$ to the general linear group with the property that 
\[\irrepSp(x) \irrepSp(y)=c_{x,y}\ \irrepSp(xy) \]
holds true for all $x,y\in G$ for some non-zero scalar $c_{x,y}\in \C^{\times}$. 

\medskip

Each irreducible \emph{linear} representation $\irrepSp\colon G\to\GL(V)$  gives
rise to its projective version $\irrepSp\colon G\to\PGL(V)$. The irreducible
projective representations \emph{which cannot be obtained in this way} are
called \emph{irreducible spin representations} and are in the focus of the
current paper.

\subsection{Spin symmetric group and spin characters}
\label{sec:spin}

The \emph{spin group $\Spin{n}$} \cite{Schur1911}
is a double cover of the symmetric group:
\begin{equation} 
\label{eq:long-sequence}
1 \longrightarrow \Z_2=\{1,z\} \longrightarrow \Spin{n} \longrightarrow \Sym{n} \longrightarrow 1.
\end{equation}
More specifically, it is the 
group generated by $t_1,\dots,t_{n-1},z$
subject to the relations:
\begin{align*}
z^2   &= 1 ,\\
z t_i &= t_i z, & t_i^2&= z & \text{for $i\in [n-1]$}, \\
(t_i t_{i+1})^3 &= z  & & & \text{for $i\in [n-2]$}, \\
t_i t_j &=  z t_j t_i & & & \text{for $|i-j|\geq 2$};
\end{align*}	
we use the convention that $[k]=\{1,\dots,k\}$. Under the mapping
$\Spin{n}\to\Sym{n}$ the generators $t_1,\dots,t_{n-1}$ are mapped to the
Coxeter tranpositions $(1,2),\ (2,3),\dots,(n-1,n)\in\Sym{n}$. 

The main advantage of the spin group comes from the fact that any
\emph{projective} representation $\irrepSp\colon \Sym{n}\to \PGL(V)$ of the
\emph{symmetric group} can be lifted uniquely to a \emph{linear} representation
$\widetilde{\irrepSp}\colon\Spin{n}\to\GL(V)$ of the \emph{spin group} so that
the following diagram commutes:
\tikzexternaldisable
\[ \begin{tikzcd}
\Spin{n} \arrow[r, "\widetilde{\irrepSp}"] \arrow[d]
& \GL(V) \arrow[d] \\
\Sym{n} \arrow[r, "\irrepSp"]
& \PGL(V). \end{tikzcd}
\]
\tikzexternalenable
In this way the \emph{projective} representation theory of \emph{the symmetric
	group $\Sym{n}$} is equivalent to the \emph{linear} representation theory of
\emph{the spin group $\Spin{n}$} which allows to speak about the characters.

\medskip

The
irreducible \emph{spin} representations of $\Sym{n}$ turn out to correspond to
irreducible \emph{linear} representations of the \emph{spin group algebra}
$\SGA{n}:= \C\Spin{n} / \langle z+1 \rangle$ which is the quotient of the group
algebra $\C\Spin{n}$ by the ideal generated by $(z+1)$. 
Equivalently $\SGA{n} = 
\langle 1-z \rangle \subset \C\Spin{n}$ 
can be identified with the ideal generated by the projection $\frac{1-z}{2}$.

\subsection{Conjugacy classes of $\Spin{n}$}

We denote by $\OP$ the set of
\emph{odd partitions}, i.e.~partitions which consist only of odd parts and by
$\OP_n$ the set of odd partitions of a given integer $n\geq 0$.

We denote by $\SP_n^+$ (respectively, $\SP_n^-$) the set of strict partitions
$\xi\in\SP_n$ with the property that the \emph{length} \[ \|\xi\|:=|\xi|-\ell(\xi) \]
is even (respectively, odd), see \cref{fig-younggraph}.

\medskip

For a partition $\pi\vdash n$ we denote by $C_\pi\subset \Spin{n}$ the set of
elements of the spin group which are mapped --- under the canonical homomorphism
$\Spin{n}\to\Sym{n}$ --- to permutations with the cycle-type given by $\pi$.

\medskip

Schur \cite{Schur1911} 
proved the following dichotomy for $\pi\vdash n$:
\begin{itemize}
	\item if one of the following two conditions is fulfilled:
	\begin{itemize}[label=\ding{212}]
		\item $\pi\in \OP_n$, or
		\item $\pi\in \SP_n^-$
	\end{itemize}
	then $C_\pi$ splits into a pair of conjugacy classes of $\Spin{n}$ which will
	be denoted by $C_\pi^\pm$;
	
	\item otherwise, $C_\pi$ is a conjugacy class of $\Spin{n}$. 
	
\end{itemize}

\subsection{Conjugacy classes and spin characters}

Any spin character vanishes on the conjugacy class $C_\pi$ which does not split,
cf.~\cite[p.~95]{Stembridge1989}. For this reason, \emph{from the viewpoint of
	the spin character theory only the conjugacy classes $C_\pi^\pm$ are
	interesting}.

\medskip

Spin representations are exactly the ones which map the central element
$z\in\Spin{n}$ to $-\operatorname{Id}\in\GL(V)$. Since $C_\pi^- = z C_\pi^+$, it
follows that the value of any spin character on $C_\pi^-$ is the opposite of its
value on $C_\pi^+$. For this reason, \emph{from the viewpoint of the spin
	character theory the conjugacy classes $C_\pi^-$ are redundant and it is enough
	to consider the character values only on the conjugacy classes $C_\pi^+$}.

\medskip

From the viewpoint of the asymptotic representation theory it is natural to
consider some \emph{sequence} of groups together  with some natural inclusions;
in our case this is the sequence
\[ \Spin{1} \subset \Spin{2} \subset \Spin{3} \subset \cdots \]
of spin groups. Such a setup allows to relate a conjugacy class of a smaller
group to some conjugacy class in the bigger group and, in this way, to evaluate
the irreducible characters of the bigger group on the conjugacy classes of the
smaller one.

Regretfully, the conjugacy classes $C_\pi^+$ which correspond to $\pi\in\SP_n^-$
do not behave nicely under such inclusions. Indeed, on the level of the
symmetric groups the inclusion $\Sym{n}\subset \Sym{n+k}$ corresponds to adding
$k$ fixpoints to a given permutation; in other words the set $C_\pi\subset
\Spin{n}$ corresponds to $C_{\pi,1^k}\subset \Spin{n+k}$ and the latter does not
split because $(\pi,1^k)\notin \SP_n$ (at least for $k\geq 2$) and
$(\pi,1^k)\notin \OP_n$ (because $n-\ell(\pi)$ is odd which implies that at
least one part of $\pi$ is even).

For this reason, \emph{for the purposes of the asymptotic representation theory
	it is enough to consider only the conjugacy classes $C_\pi^+$ for
	$\pi\in\OP_n$.}

\subsection{Irreducible spin representations}
\label{sec:irreducible-spin-representations}

The relationship between strict partitions and the irreducible spin
representations of the symmetric groups is \emph{not} a bijective one.
Nevertheless, as we shall discuss below, this non-bijectivity can be ignored to
large extent.

\medskip

More specifically (see \cite[p.~235]{Schur1911} and \cite[Theorem
7.1]{Stembridge1989}), each $\xi\in\SP_n^+$ corresponds to a \emph{single}
irreducible representation. We denote by its character by $\phi^{\xi}$.

\medskip

On the other hand, each $\xi\in\SP_n^-$ corresponds to a \emph{pair} of
irreducible spin representations with equal dimensions; we denote their
characters by $\phi^{\xi}_+$ and $\phi^{\xi}_-$. These two characters coincide
on the conjugacy classes $C^{\pm}_{\pi}$ over $\pi\in\OP_n$. For the purposes of
the current paper we not need to evaluate the characters on $C^{\pm}_{\pi}$ for
$\pi\in \SP_n^-$; for this reason we do not have to distinguish between
$\phi^{\xi}_+$ and $\phi^{\xi}_-$ and we may denote them by the same symbol
$\phi^\xi$. 

\subsection{Superrepresentations}

In order to avoid the aforementioned difficulty related to the fact that the
relationship between the irreducible representations of $\SGA{n}$ and $\SP_n$ is
\emph{not} bijective we may change our setup to \emph{superalgebras} and their
\emph{superrepresentations}. The following presentation is based on
\cite[Chapters 12, 13, 22]{Kleshchev2005}.

\medskip

We recall that a \emph{superalgebra} is defined as an algebra $\A$ which is
equipped with some $\Z_2$-grading $\A=\A_{\zero}\oplus\A_{\one}$. Similarly, a
\emph{superspace} is a linear space $V$ equipped with some decomposition
$V=V_{\zero}\oplus V_{\one}$. The algebra $\operatorname{End}(V)$ of
endomorphisms of a superspace carries a natural structure of a superalgebra by
declaring that $X\in \operatorname{End}(V)$ is homogeneous of degree
$i\in\Z_2=\{\zero,\one\}$ if and only if for any homogeneous vector $v\in V_j$
with $j\in \Z_2$ we have that $X(v)\in V_{i+j}$.

A \emph{superrepresentation} $\psi\colon \A \to \operatorname{End}(V)$ of a
superalgebra is an algebra homomorphism which has the additional property that
for any homogeneous element $x\in \A_i$ of degree $i\in\Z_2$ its image $\psi(x)
\in  \operatorname{End}(V)$ is also homogeneous of degree $i$.

\medskip

We define a superalgebra structure on the spin group algebra $\C\Spin{n}$ by
declaring that the linear space of homogeneous elements of degree $\zero$
(respectively, $\one$) is spanned by the group elements $g\in\Spin{n}$ which
under the canonical projection $\Spin{n}\to\Sym{n}$ are mapped to \emph{even}
(respectively, \emph{odd}) permutations. Then irreducible superrepresentations
of $\SGA{n}$	are in a canonical bijective correspondence with strict partitions
in $\SP_n$ \cite[Theorem 22.3.1]{Kleshchev2005}. 

\medskip

\emph{[Here and in the following, the text in square brackets --- such as 
	this one --- is intended only for Readers who are proficient in the
	terminology related to superalgebras. It provides some additional context
	but is not necessary for our purposes.]} 

We consider the irreducible superrepresentation which corresponds to
$\xi\in\SP^+$. \emph{[It turns out to be of type $\mathtt{M}$.]} If we forget the
superalgebra structure, it becomes an irreducible representation of the
algebra $\SGA{n}$ with the character $\phi^\xi$, as discussed in
\cref{sec:irreducible-spin-representations}.

We consider now the irreducible superrepresentation which corresponds to
$\xi\in\SP^-$. \emph{[It turns out to be of type $\mathtt{Q}$.]} If we forget
the superalgebra structure, it becomes a direct sum of the two irreducible
representations of the algebra $\SGA{n}$ with the characters $\phi^\xi_\pm$, as
discussed in \cref{sec:irreducible-spin-representations}. In particular, its
character is equal to $\phi^\xi_+ + \phi^\xi_-$.

\subsection{Spin characters: conclusion}
\label{appendix:conclusion}

For each strict partition $\xi\in\SP_n$ and each odd partition $\pi\in\OP_n$ the
value of the irreducible spin character
\[ \phi^{\xi}(\pi)= \Tr \irrepSp^\xi(c^\pi) \]
is well defined, where $c^\pi\in C_\pi^+$ is a representative of the of the
conjugacy class $C_\pi^+$, cf.~\cite[Eq.~(2.1)]{Stembridge1989}.

\section*{Acknowledgments}

Research of SM was supported by JSPS KAKENHI Grant Number 17K05281.
Research of PŚ was supported by \emph{Narodowe Centrum Nauki}, 
grant number 2017/26/A/ST1/00189.

\biblio

\end{document}